\documentclass[a4paper,12pt]{article}

\usepackage{amsthm,amsmath,amscd,mathtools}
\usepackage[vmargin=2cm,hmargin=2cm,headheight=14.5pt,top=2cm,headsep=.5cm]{geometry}
\usepackage{hyperref}
\usepackage[utf8]{inputenc}
\usepackage[english]{babel}
\usepackage[autobold]{mathfixs}
\usepackage{stackrel}
\usepackage{cases}
\usepackage[charter]{mathdesign}
\usepackage{bm}
\usepackage{caption}
\usepackage{xcolor}
\usepackage{graphicx}
\DeclareMathSizes{12}{12}{8}{6}

\usepackage{cite}
\usepackage{enumitem}
\setlist[itemize,2]{label=$\centerdot$}
\setlist[itemize,3]{label=$\triangle$}

\usepackage[shortcuts]{extdash}

\newtheoremstyle{ptheorem}{1em}{0em}{\itshape}{}{\bfseries}{.}{.5em}{\thmname{#1}\thmnumber{
		#2}\thmnote{ (\hspace{-.01pt}{#3})}}

\theoremstyle{ptheorem}

\newtheorem{thm}{Theorem}[section]
\newtheorem{pro}[thm]{Proposition}
\newtheorem{lem}[thm]{Lemma}
\newtheorem{cor}[thm]{Corollary}

\newtheoremstyle{hdef}{1em}{0em}{}{}{\bfseries}{.}{.5em}{\thmname{#1}\thmnumber{
		#2}\thmnote{ (\hspace{-.01pt}{#3})}}
\theoremstyle{hdef}

\newtheorem{dfn}[thm]{Definition}
\newtheorem{rem}[thm]{Remark}

\newtheorem{exa}[thm]{Example}

\numberwithin{equation}{section}
\numberwithin{figure}{section}

\newcommand{\cB}{{\mathcal B}}
\newcommand{\cC}{{\mathcal C}}
\newcommand{\cD}{{\mathcal D}}

\newcommand{\cI}{{\mathcal I}}

\newcommand{\cK}{{\mathcal K}}

\newcommand{\cM}{{\mathcal M}}

\newcommand{\bC}{{\mathbb C}}

\newcommand{\bF}{{\mathbb F}}

\newcommand{\bN}{{\mathbb N}}

\newcommand{\bR}{{\mathbb R}}

\renewcommand{\l}{\lambda}
\newcommand{\e}{\varepsilon}

\renewcommand{\phi}{\varphi}

\newcommand{\n}{{n\in\bN}}

\renewcommand{\(}{\left(}
\renewcommand{\)}{\right)}

\newcommand{\pd}{\partial}

\newcommand{\olb}[1]{\vbox{\offinterlineskip\ialign{\hfil##\hfil\cr
			$\rotatebox[origin=c]{90}{$]$}$\cr\noalign{\kern-.45ex}{$#1$}\cr}}}

\newcommand{\noop}[1]{}

\renewcommand{\ss}{\subset}
\usepackage{stmaryrd}

\allowdisplaybreaks

\begin{document}

	\title{Constructive solutions of the heat equation\\with Stieltjes derivatives}

\author{Clara Senín$^{\dagger}$\\
	\normalsize \emph{e\--mail:} clara.senin.sanchez@rai.usc.gal\\
	F. Adri\'an F. Tojo$^{\dagger*}$ \\
	\normalsize \emph{e\--mail:} fernandoadrian.fernandez@usc.es\\
	\small \emph{$^{\dagger}$CITMAga, 15782, Santiago de Compostela, Spain}\\
	\small \emph{$^{*}$Departamento de Estatística, Análise Matemática e Optimización},\\
	\small \emph{Universidade de Santiago de Compostela, 15782, Santiago de Compostela, Spain.}
}

\date{}

\maketitle

\begin{abstract}
	In this work, we investigate the one\--dimensional heat equation within the framework of Stieltjes calculus. We first consider the equation associated with two fixed derivators and develop a constructive approach to establish the existence of solutions. We then study the corresponding initial value problem and incorporate several types of boundary conditions. Finally, we introduce a notion of multivariable derivator, suitable for higher\--dimensional settings, and obtain explicit solutions of the heat equation for relevant classes of such derivators.
\end{abstract}

\medbreak

\noindent   \textbf{2020 MSC:} 26A24, 35K20, 35K05.

\medbreak

\noindent   \textbf{Keywords and phrases:} Stieltjes derivative, heat equation, uniqueness, existence, separation of variables.

\section{Introduction}

Stieltjes differential calculus provides a powerful framework for modeling dynamical systems that exhibit latency periods, abrupt transitions, or impulsive effects. By allowing differentiation with respect to functions of bounded variation, it unifies continuous and discrete calculus and offers a natural setting for the study of dynamic equations on time scales. In recent years, the Stieltjes derivative has been successfully applied to a wide range of models, including insect population dynamics \cite{velutina,biology}, solute dissolution and water evaporation processes \cite{1st_order_systems}, vehicle motion \cite{Resolution_methods}, prey\--predator systems with seasonal effects \cite{predator_prey}, and thermal stress in solar panels \cite{MaiaTo2}.

From a theoretical perspective, Stieltjes differential equations are particularly well suited for describing evolution processes with discontinuities in time. Within this framework, the Stieltjes derivative encodes both absolutely continuous dynamics and jump phenomena through the atomic components of the underlying measure, thereby providing a unified operator that naturally incorporates impulsive behavior.

In the setting of partial differential equations, where discontinuities in time interact with spatial diffusion, impulsive parabolic problems have been extensively studied; see, for example, \cite{HernandezM.2008,Hakl2017,Liu2011,Liu1998,Wang2010,Erbe1991,Rogovchenko1996,Dou2006,Kirane1997,Gao2003,Bainov1996,Chan1996,Rogovchenko1997,Vlasenko2008,Zhang2002}. These models describe diffusion processes subject to instantaneous perturbations or abrupt changes in the medium, and arise naturally in applications such as heat conduction with thermal shocks, phase transitions, and controlled diffusion processes. In this context, the formulation of parabolic equations via Stieltjes derivatives provides a unified and flexible alternative to classical impulsive models, avoiding the explicit imposition of jump conditions while preserving the essential features of the dynamics.

Despite this growing interest, the study of classical parabolic equations ---particularly the heat equation--- within the Stieltjes framework remains relatively limited. Existing contributions primarily rely on abstract existence results and functional analytic techniques, such as semigroup theory and diagonalization methods, as well as on numerical approaches for treating initial and boundary value problems.

For instance, in \cite{Fernandez2020}, a diagonalization method is employed to establish existence and uniqueness results for initial value problems subject to Dirichlet and Neumann boundary conditions, and Stieltjes\--Bochner spaces are introduced to handle the problem. In \cite{velutina}, the theory is applied to model the diffusion of an invasive species. Furthermore, parabolic equations within the framework of dynamic equations on time scales ---generalized via Stieltjes derivatives--- have been studied in \cite{cuchta_heat_2023,Bohner2024,Georgiev2025,Georgiev2025a,Jackson2006}.

Most available results, however, are restricted either to one\--dimensional or specific classes of derivators (i.e., functions generating the Stieltjes derivative), leaving open the development of constructive approaches and extensions to more general multidimensional derivators. Thus, the aim of this work is to develop a constructive solution theory for the heat equation with Stieltjes derivatives. More precisely, we employ the method of separation of variables to derive explicit series representations of solutions and to establish precise conditions under which these series define valid solutions. In contrast to semigroup and diagonalization approaches, our methodology yields explicit representations, facilitates the treatment of various boundary conditions, and naturally extends to multidimensional settings.

In particular, building upon \cite{periodic}, we derive solutions under periodic boundary conditions.
We also introduce a more general notion of derivator, better suited to higher\--dimensional problems, and incorporate it into the analysis of the heat equation within the Stieltjes framework. This notion constitutes a nontrivial extension of the framework of product time scales (cf.~\cite[Chapter~6]{bohner2016multivariable}), as it allows for interdependence among the measures associated with the different variables, rather than assuming a simple product structure.

The paper is organized as follows. In Section~\ref{Prelim}, we present the main definitions and fundamental results from Stieltjes calculus used throughout the paper. In Section~\ref{solgeneral}, we introduce the heat equation with Stieltjes derivatives and consider two types of derivators. We construct solutions via separation of variables and analyze the conditions under which the resulting series define solutions of the equation. In Section~\ref{IVP}, we incorporate initial conditions and establish existence results for the corresponding initial value problem under suitable assumptions. Section~\ref{Boundary_cond} is devoted to periodic, Dirichlet, and Neumann boundary conditions under appropriate constraints on the derivators. Finally, in Section~\ref{Two_derivators}, we introduce a multivariable derivator and study the heat equation with respect to a two\--variable derivator, obtaining several results and explicit solutions in representative cases.

\section{Preliminaries}\label{Prelim}

In this section, we present some basic results about Stieltjes differential calculus which will be needed later in this work. We will mainly follow \cite{velutina,Fernandez2022}.

Let $\mathbb{F}$ denote the field $\mathbb{R}$ or $\mathbb{C}$. Throughout this section, we consider a left\--continuous, nondecreasing function $g:\mathbb{R}\rightarrow \mathbb{R}$, which we refer to as \textit{derivator}. We define the following set:
\[D_{g}=\left\{t\in \mathbb{R}\ : \ g(t^{+})-g(t)>0\right\},\]
where $g(t^{+})$ represents the right\--hand side limit $\lim_{s\rightarrow t^{+}}g(s)$.
The \textit{jump of $g$ at $t$} is denoted by $\Delta g(t):=g(t^{+})-g(t)$, for each $t\in \mathbb{R}$. Since $g$ is nondecreasing, $D_g$ is countable.

Define
\[C_{g}=\left\{t\in \mathbb{R}\ : \ \text{there exists } \varepsilon >0 \text{ such that $g$ is constant on }(t-\varepsilon,t+\varepsilon)\right\}.\]
We have that $C_{g}$ is an open subset of $\mathbb{R}$ with respect to the usual topology $\tau_{u}$. Therefore, we can write
\begin{equation}\label{Cg}
	C_{g}=\bigcup_{n\in\widetilde{\Lambda}}(a_{n},b_{n}),
\end{equation}
where
$\widetilde{\Lambda}\subset \mathbb{N}$ and the intervals $(a_{n},b_{n})$ are pairwise disjoint.

Given a derivator $g:\mathbb{R}\rightarrow\mathbb{R}$, we consider the \textit{Lebesgue\--Stieltjes measure $\mu_g$ associated with $g$}, given by
\[\mu_{g}(\left[c,d\right))=g(d)-g(c), \quad c,d\in\mathbb{R},\ c<d,\]
and extended using Carathéodory's extension theorem.
We define the \textit{outer measure associated with $g$} as:
\begin{equation}\label{outer_m}
	\mu_{g}^{*}(A):=\inf\left\{\sum_{n=1}^{\infty}(g(b_{n})-g(a_{n}))\ : \ A\subset\bigcup_{n=1}^{\infty}[a_{n},b_{n}), \ \{[a_{n},b_{n})\}_{n=1}^{\infty}\subset \widetilde{\mathcal{C}} \right\}, \quad A\subset\mathbb{R},
\end{equation}
where
\[\widetilde{\mathcal{C}}=\left\{[a,b)\ : \ a,b\in\mathbb{R},\ a<b\right\}.\]
We consider the measure space $(\mathbb{R},\mathcal{M}_{g},\mu_{g})$, where $\cM_g$ and $\mu_g$
are defined analogously to the classical Lebesgue $\sigma$\--algebra and measure.
A set is said to be \textit{$g$\--measurable} if it is measurable with respect to $\mu_{g}$. Likewise, we say that a function is \textit{$g$\--measurable} if it is measurable with respect to $\mu_{g}$. We denote by $\mathcal{L}^{1}_{g}(X,\mathbb{F})$ the space of $\mu_{g}$\--integrable functions defined on the $g$\--measurable subset $X\subset \mathbb{R}$ taking values in $\mathbb{F}$. If $f\in \mathcal{L}^{1}_{g}(X,\mathbb{F})$, we denote its integral by
\[\int_{X}f(s)\operatorname{d}\mu_{g}(s)\equiv \int_{X}f\operatorname{d}\mu_{g}.\]

A property is said to hold \textit{$g$\--almost everywhere} in $X\subset \mathbb{R}$ if it holds at every point of $X$ except on a subset $N\subset X$ such that $\mu_{g}(N)=0$. We abbreviate the expression ``$g$\--almost everywhere'' by ``$g$\--a. e.''.

Just as with the classical Lebesgue integral, we can consider the equivalence relation on $\mathcal{L}_{g}^{1}(X,\mathbb{F})$ that identifies two functions which are equal $g$\--almost everywhere. We denote the resulting quotient space by $L_{g}^{1}(X,\mathbb{F})$.

We now introduce the concept of continuity with respect to a derivator.
\begin{dfn}
	Let $X\subset \mathbb{R}$, $(Y,\|\cdot\|_Y)$ a normed space and $f:X\rightarrow Y$. Given $t\in X$, the function $f$ is said to be \textit{$g$\--continuous at $t$} if for every $\varepsilon\in\mathbb{R}^{+}$ there exists $\delta\in \mathbb{R}^{+}$ such that $\| f(s)-f(t)\|_{Y}<\varepsilon$ whenever $s\in X$ and $|g(s)-g(t)|<\delta$. The function $f$ is said to be \textit{$g$\--continuous} if it is $g$\--continuous at every $t\in X$.
\end{dfn}

\begin{rem}
	In the case $g=\operatorname{Id}$, where $\operatorname{Id}:\mathbb{R}\rightarrow\mathbb{R}$ is the identity function, we recover the usual definition of continuity. However, unlike classically continuous functions, $g$\--continuous functions are not necessarily locally bounded \cite[Example 3.3]{1st_order_systems}.
\end{rem}

We denote by $\mathcal{C}_{g}(X,Y)$ the space of $g$\--continuous functions defined on the set $X$ and taking values in $Y$. We use the notation $\mathcal{BC}_{g}(X,Y)$ to refer to the space of $g$\--continuous functions that are bounded on $X$. It is shown in \cite[Theorem~3.4]{1st_order_systems} that, if $(Y,\|\cdot\|_Y)$ is a Banach space, so is $\mathcal{BC}_{g}(X,Y)$, endowed with the supremum norm,
\begin{equation*}
	\| h \|_{0}=\sup \limits_{t\in X}\|h(t)\|_{Y},\quad h\in\mathcal{BC}_{g}(X,Y).
\end{equation*}

Our next step is to define the Stieltjes derivative of a function with respect to a derivator $g$. Let $I\subset \mathbb{R}$ be an interval.

\begin{dfn}[{\cite[Definition 3.7]{Fernandez2022}}]\label{def_gderiv}
	Let $f:I\rightarrow Y$, where $Y$ is a Banach space, and fix $t\in I$. The \textit{Stieltjes derivative} of $f$ at $t$ (or the \textit{$g$\--derivative} of $f$ at $t$), is defined as
	\[f'_{g}(t)=
	\begin{dcases}
		\lim\limits_{s\rightarrow t}\frac{f(s)-f(t)}{g(s)-g(t)}, & t\notin D_{g}\cup C_{g}, \\
		\lim\limits_{s\rightarrow t^{+}}\frac{f(s)-f(t)}{g(s)-g(t)}, & t\in D_{g},\\
		\lim\limits_{s\rightarrow b_{n}^{+}}\frac{f(s)-f(b_{n})}{g(s)-g(b_{n})}, & t\in (a_{n},b_{n}),
	\end{dcases} \]
	if the respective limit exists. In this case, we say that $f$ is \textit{$g$\--differentiable at $t$}, or \textit{differentiable with respect to $g$ at $t$}.
	 We say that $f$ is \textit{$g$\--differentiable} if it is $g$\--differentiable at every point in its domain.	\end{dfn}

\begin{rem}
	For $t\in D_{g}$, the function $f$ is $g$\--differentiable at $t$ if and only if $f(t^{+})$ exists. In that case,
	\[f'_{g}(t)=\frac{f(t^{+})-f(t)}{\Delta g(t)}.\]
\end{rem}

\begin{rem}[{\cite[Remark 2.4]{second_non_constant_coef}}]\label{derivada_tstar}
	Let us introduce the notation
	\begin{equation}\label{tstar}
		t^{*}=		\begin{dcases}
			t, & t\notin C_{g}, \\
			b_{n}, & t\in (a_{n},b_{n})\subset C_{g}.
	\end{dcases} \end{equation}
	Then, the expression for the $g$\--derivative can be written more compactly as
	\[f'_{g}(t)=
	\begin{dcases}
		\lim\limits_{s\rightarrow t}\frac{f(s)-f(t)}{g(s)-g(t)}, & t\notin D_{g}\cup C_{g}, \\
		\lim\limits_{s\rightarrow t^{+}}\frac{f(s)-f(t^{*})}{g(s)-g(t^{*})}, & t\in D_{g}\cup C_{g}.
	\end{dcases} \]
\end{rem}

The next result presents some properties of the Stieltjes derivative, including linearity and the product rule.
\begin{pro}[{\cite[Proposition 3.9]{Fernandez2022}}]\label{propiedades_gderivada}
	Let $I\subset\mathbb{R}$ be an interval, $t\in I$ and let $f_{1},f_{2}:I\rightarrow\mathbb{F}$ be two $g$\--differentiable functions at $t$. Then, using $t^{*}$ as given in~ \eqref{tstar}, the following statements hold:
	\begin{enumerate}
		\item Given $\lambda_{1},\lambda_{2}\in \mathbb{F}$, the function $\lambda_{1}f_{1}+\lambda_{2}f_{2}$ is $g$\--differentiable at $t$ and
		\[(\lambda_{1}f_{1}+\lambda_{2}f_{2})'_{g}(t)=\lambda_{1}(f_{1})'_{g}(t)+\lambda_{2}(f_{2})'_{g}(t).\]
		\item The product $f_{1}f_{2}$ is $g$\--differentiable at $t$ and
		\[(f_{1}f_{2})'_{g}(t)=(f_{1})'_{g}(t)f_{2}(t^{*})+(f_{2})'_{g}(t)f_{1}(t^{*})+(f_{1})'_{g}(t)(f_{2})'_{g}(t)\Delta g(t^{*}).\]
	\end{enumerate}
\end{pro}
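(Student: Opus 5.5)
The plan is to argue case by case, following the compact form of the $g$\--derivative in Remark~\ref{derivada_tstar}. At every $t\in I$ the derivative is the limit of a difference quotient
\[
Q_f(s)=\frac{f(s)-f(t^{*})}{g(s)-g(t^{*})}.
\]
The limit is two\--sided ($s\to t$) when $t\notin D_g\cup C_g$, in which case $t^*=t$, and one\--sided ($s\to t^{+}$) when $t\in D_g\cup C_g$. Both statements then reduce to algebraic identities between difference quotients with a common base point $t^*$. Once those identities are written down, the existence of $(f_1)'_g(t)$ and $(f_2)'_g(t)$, together with the algebra of limits in $\mathbb F$, gives the claim. Throughout, I only use values $s$ with $g(s)\neq g(t^*)$. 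This is implicit in the definition of the limit, so a single treatment covers every case.

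\textbf{Linearity.} For such $s$ we have $Q_{\lambda_1f_1+\lambda_2f_2}(s)=\lambda_1Q_{f_1}(s)+\lambda_2Q_{f_2}(s)$. Letting $s$ tend to $t$, in the appropriate sense for the case at hand, gives the first identity at once.

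\textbf{Product rule.} I will use the decomposition
\[
f_1(s)f_2(s)-f_1(t^*)f_2(t^*)=\bigl(f_1(s)-f_1(t^*)\bigr)f_2(t^*)+f_1(t^*)\bigl(f_2(s)-f_2(t^*)\bigr)+\bigl(f_1(s)-f_1(t^*)\bigr)\bigl(f_2(s)-f_2(t^*)\bigr).
\]
Dividing by $g(s)-g(t^*)$ yields
\[
Q_{f_1f_2}(s)=Q_{f_1}(s)f_2(t^*)+f_1(t^*)Q_{f_2}(s)+Q_{f_1}(s)Q_{f_2}(s)\bigl(g(s)-g(t^*)\bigr).
\]
The first two terms converge to $(f_1)'_g(t)f_2(t^*)+(f_2)'_g(t)f_1(t^*)$. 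For the last term, $Q_{f_1}Q_{f_2}$ converges to $(f_1)'_g(t)(f_2)'_g(t)$, so everything comes down to identifying the limit of $g(s)-g(t^*)$.

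\textbf{The main step: the limit of $g(s)-g(t^*)$.} I expect this to be the only delicate point, and I will check it case by case, using that $g$ is left\--continuous and nondecreasing.
\begin{itemize}
\item If $t\notin D_g\cup C_g$, then $g$ is continuous at $t=t^*$. It is left\--continuous by hypothesis and right\--continuous because $t\notin D_g$. Hence $g(s)-g(t^*)\to 0=\Delta g(t^*)$.
\item If $t\in D_g$ and $t\notin C_g$, then $t^*=t$ and $g(s)-g(t)\to g(t^+)-g(t)=\Delta g(t^*)$ as $s\to t^+$.
\item If $t\in(a_n,b_n)\subset C_g$, then $t^*=b_n$. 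As $s\to t^{+}$ the definition actually takes $s\to b_n^{+}$, so $g(s)-g(b_n)\to \Delta g(b_n)=\Delta g(t^*)$. This value is possibly zero, and the formula still holds.
\end{itemize}
One subtlety remains: $t\in D_g\cap C_g$ cannot occur, since $g$ is constant near points of $C_g$. Finally, I will note that the existence of both limits gives the existence of $(f_1f_2)'_g(t)$, which establishes the stated formula.
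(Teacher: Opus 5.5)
Your proof is correct. The paper gives no proof of this proposition; it is quoted from \cite{Fernandez2022}. Your argument is the standard difference-quotient proof: you fix the common base point $t^*$, split $f_1(s)f_2(s)-f_1(t^*)f_2(t^*)$ into three terms, and check case by case that $g(s)-g(t^*)\to\Delta g(t^*)$, correctly reading the $C_g$ case as a limit $s\to b_n^{+}$.
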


We now examine function spaces with specific regularity in the context of Stieltjes differential calculus.

\begin{dfn}[{\cite[Definition 3.11]{Fernandez2022}}] Let $I\subset \mathbb{R}$ be an interval, $Y$ a Banach space and $g:\mathbb{R}\rightarrow\mathbb{R}$ a derivator. Define $\mathcal{C}_{g}^{0}(I,Y):=\mathcal{C}_{g}(I,Y)$. For each $k\in\mathbb{N}$, define recursively
	\[\mathcal{C}_{g}^{k}(I,Y)=\left\{f\in \mathcal{C}_{g}^{k-1}(I,Y)\ :\ (f_{g}^{(k-1)})'_{g}\in\mathcal{C}_{g}(I,Y)\right\},\]
	where $f_{g}^{(0)}=f$ and $f_{g}^{(k)}=(f_{g}^{(k-1)})'_{g}$. Finally, define \[\mathcal{C}_{g}^{\infty}(I,Y)=\bigcap_{k\in \mathbb{N}}\mathcal{C}_{g}^{k}(I,Y).\]

Analogously,	define $\mathcal{BC}_{g}^{0}(I,Y)=\mathcal{BC}_{g}(I,Y)$ and
	for each $k\in \mathbb{N}$, define recursively	\[\mathcal{BC}_{g}^{k}(I,Y)=\left\{f\in \mathcal{BC}_{g}^{k-1}(I,Y)\ :\ f_{g}^{(k)}\in\mathcal{BC}_{g}(I,Y)\right\}.\]
\end{dfn}

Given $k\in \mathbb{N}$, we can equip the space $\mathcal{BC}_{g}^{k}(I,Y)$ with the norm
\[\|f\|_{k}:=\max\limits_{0\leq i\leq k}\|f_{g}^{(i)}\|_0.\]

We now present the Fundamental Theorem of Calculus for Stieltjes derivatives. Before doing so, we introduce the notion of absolute continuity with respect to a derivator, along with a result concerning the $g$\--differentiability of the integral of a $g$\--integrable function.
\begin{dfn}[{\cite[Definition 5.1]{NewUnification}}]
	Let $a, b\in \mathbb{R}$, with $a<b$, let $(Y,\|\cdot\|_Y)$ be a normed space and let $f:[a,b]\rightarrow Y$. We say that $f$ is \textit{$g$\--absolutely continuous on $[a,b]$} (or \textit{absolutely continuous with respect to $g$ on $[a,b]$}) if for every $\varepsilon\in \mathbb{R}^{+}$, there exists $\delta\in\mathbb{R}^{+}$ such that for every finite family of pairwise disjoint subintervals $\{(a_{n},b_{n})\}_{n=1}^{m}\subset [a,b]$ satisfying
	\[\sum\limits_{n=1}^{m}(g(b_{n})-g(a_{n}))<\delta,\]
	we have
	\[\sum\limits_{n=1}^{m}\|f(b_{n})-f(a_{n})\|_Y<\varepsilon.\]
	We denote by $\mathcal{AC}_{g}([a,b],Y)$ the space of $g$\--absolutely continuous functions on $[a,b]$ taking values in $Y$. It follows that $\mathcal{AC}_{g}([a,b],Y)\subset\mathcal{BC}_{g}([a,b],Y).$

	\begin{thm}[{\cite[Theorem~3.26]{tesis}}]\label{TFC2}
		Let $a,b\in\mathbb{R}$, with $a<b$ and let $f\in\mathcal{L}_{g}^{1}([a,b),\mathbb{F})$. Then, the function $F:[a,b]\rightarrow\mathbb{F}$ defined by
		\[F(t):=\int_{[a,t)}f(s)\operatorname{d}\mu_{g}(s),\]
		is well\--defined, $g$\--absolutely continuous on $[a,b]$, and satisfies
		\[F'_{g}(t)=f(t),\quad g-a. e. \quad t\in[a,b].\]
	\end{thm}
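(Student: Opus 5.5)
The plan is to prove the three claims in order: well-definedness, $g$-absolute continuity, and the almost-everywhere derivative formula. We may assume $\mathbb{F}=\mathbb{R}$ by splitting $f$ into real and imaginary parts, and then $f=f^+-f^-$ where needed.

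\textbf{Well-definedness.} For every $t\in[a,b]$, $[a,t)\subset[a,b)$ is $g$-measurable and $|f\chi_{[a,t)}|\le|f|$. Hence $f\chi_{[a,t)}\in\mathcal{L}^1_g$ and $F(t)$ is a finite number.

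\textbf{Absolute continuity.} Let $\nu(A)=\int_A|f|\,\operatorname{d}\mu_g$. This is a finite measure absolutely continuous with respect to $\mu_g$. By the standard $\varepsilon$--$\delta$ characterization (proved by contradiction with Borel--Cantelli), for each $\varepsilon>0$ there is $\delta>0$ with $\mu_g(A)<\delta\Rightarrow\nu(A)<\varepsilon$. Take disjoint $(a_n,b_n)\subset[a,b]$ with $\sum(g(b_n)-g(a_n))<\delta$. Then $F(b_n)-F(a_n)=\int_{[a_n,b_n)}f\,\operatorname{d}\mu_g$ and $\mu_g([a_n,b_n))=g(b_n)-g(a_n)$. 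So $A=\bigcup[a_n,b_n)$ satisfies $\mu_g(A)<\delta$, which gives $\sum|F(b_n)-F(a_n)|\le\nu(A)<\varepsilon$.

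\textbf{The derivative formula.} I split the points of $[a,b]$ into three classes.

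First, consider $t\in D_g$. Each such point is an atom with $\mu_g(\{t\})=\Delta g(t)>0$, so equality a.e. must hold at every such $t$. By dominated convergence, for $s\to t^+$ we get $F(s)-F(t)=\int_{[t,s)}f\to f(t)\Delta g(t)$, since $\bigcap_{s>t}[t,s)=\{t\}$. Dividing by $g(s)-g(t)\to\Delta g(t)$ gives $F'_g(t)=f(t)$.

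Second, consider $t\in C_g$, say $t\in(a_n,b_n)$. Then $t^*=b_n$, and $\mu_g((a_n,b_n))=0$, so $C_g$ is $\mu_g$-null up to the points $b_n$. For $b_n\in[a,b]$ the derivative is computed at $b_n$, and $b_n\notin C_g$. Hence it suffices to handle $b_n$ via the other cases. One caveat: $F'_g(t)=F'_g(b_n)$, but this need not equal $f(t)$. That is acceptable because $C_g$ is null.

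Third, consider $t\notin D_g\cup C_g$. This is the main obstacle: a Lebesgue differentiation theorem for $\mu_g$. I would transfer the problem to Lebesgue measure through $g$. Define the pseudo-inverse $\gamma(u)=\sup\{s:g(s)\le u\}$. The pushforward of $\mu_g$ under $g$ is Lebesgue measure on $g(\mathbb{R})$ up to the jump gaps. Then $F(s)-F(t)=\int_{g(t)}^{g(s)}f(\gamma(u))\,du$ for $s$ near $t$, with appropriate handling of the gaps coming from jumps at $s$ itself; these contribute an error that I would control.

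The classical Lebesgue differentiation theorem gives, for Lebesgue-a.e. $u$,
\[\frac{1}{h}\int_u^{u+h}f\circ\gamma\to f(\gamma(u)).\]
The set of bad $t$ with $g(t)$ in the exceptional null set has $\mu_g$-measure zero, because $\mu_g(g^{-1}(N))\le\mathcal{L}(N)=0$ away from $D_g$.

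Alternatively, I would run a Vitali covering argument directly for the doubling-free measure $\mu_g$ in one dimension. Here the Besicovitch covering theorem is available, and it gives the differentiation of $\nu$ with respect to $\mu_g$ using intervals containing $t$. This yields $\frac{F(s)-F(t)}{g(s)-g(t)}\to f(t)$ for $\mu_g$-a.e. $t$, for two-sided $s\to t$.

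The delicate points in this third case are:
\begin{itemize}
\item the endpoint measure of $\{s\}$ when $s\in D_g$, handled via the half-open intervals $[t,s)$;
\item the possibility that $g(s)=g(t)$ for $s\neq t$ near a boundary point of $C_g$. Such $t$ are endpoints $a_n$, which form a countable set of non-atoms and are therefore null.
\end{itemize}
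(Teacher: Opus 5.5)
The paper gives no proof of this result; it is imported from \cite[Theorem~3.26]{tesis}. Your proposal therefore has to stand on its own, and in substance it does. Well-definedness, the $g$-absolute continuity argument, the atom case via dominated convergence, and the observation that $\mu_g(C_g)=0$ are all correct.

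For the non-atomic points, your first route (pseudo-inverse $\gamma$ plus the classical Lebesgue differentiation theorem) is a complete proof. However, the transfer should be stated the other way round, and it is exact.

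\emph{Direction of the transfer.} The pushforward $g_{\#}\mu_g$ is not Lebesgue measure: it has atoms at the points $g(d)$, $d\in D_g$. What you need is the following. Left-continuity gives $g(x)\le u\iff x\le\gamma(u)$. Hence $\mathcal{L}(\{u\in[g(a),g(b)):\gamma(u)<x\})=g(x)-g(a)=\mu_g([a,x))$, i.e.\ $\gamma$ pushes Lebesgue measure on $[g(a),g(b))$ forward to $\mu_g$ on $[a,b)$.

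\emph{No error term.} Consequently $F=\Phi\circ g$ with $\Phi(v)=\int_{g(a)}^{v}f\circ\gamma\,\operatorname{d}u$, exactly. On each gap $(g(d),g(d^+))$ one has $\gamma\equiv d$, which reproduces precisely the contribution $f(d)\Delta g(d)$, so the jumps need no separate handling.

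\emph{Null set.} Your null-set claim is right. The reason is that $g(\gamma(u))=u$ whenever $\gamma(u)\notin D_g$, so $\gamma^{-1}(\{t\notin D_g: g(t)\in N\})\subset N$.

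\emph{Endpoints of $C_g$.} To have $g(s)\neq g(t)$ for $s\neq t$, and $\gamma(g(t))=t$ (so the limit $f(\gamma(g(t)))$ equals $f(t)$), you must discard the endpoints of the components of $C_g$ that are not in $D_g$. This means the right endpoints $b_n$ as well as the left endpoints $a_n$: left-continuity forces $g(s)=g(b_n)$ for $s\in(a_n,b_n)$, so the quotient is undefined on one side at $b_n$ too. This is still a countable set of non-atoms, hence $\mu_g$-null.

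Your second route is unnecessary, and as phrased it cites the wrong tool. The Besicovitch theorem differentiates along centered intervals, whereas the $g$-difference quotients use the one-sided intervals $[t,s)$ and $[s,t)$. For a general measure the one-sided statement does not follow directly from the centered one. In one dimension it does hold, but via the covering lemma that any finite family of intervals contains a subfamily with the same union and overlap at most two.

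Finally, if $b\in D_g$ then $\{b\}$ is an atom at which neither $f(b)$ nor the right-hand quotient defining $F'_g(b)$ is available. So the conclusion (and your atom case) must be read for $g$-a.e.\ $t\in[a,b)$, as in Theorem~\ref{TFC1}.
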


\end{dfn}
\begin{thm}[{\cite[Theorem~5.4]{NewUnification}}]\label{TFC1}			Let $a, b\in \mathbb{R}$, with $a<b$, and let $f:[a,b]\rightarrow \mathbb{F}$. Then, the following statements are equivalent:
	\begin{enumerate}
		\item The function $f$ is $g$\--absolutely continuous on $[a,b]$.
		\item The function $f$ satisfies the following three conditions:
		\begin{enumerate}
			\item The $g$\--derivative $f'_{g}(t)$ exists $g$\--a. e. in $ \left[a,b\right)$,
			\item $f'_{g}\in \mathcal{L}_{g}^{1}(\left[a,b\right),\mathbb{F}),$
			\item For every $t\in [a,b],$
			\[f(t)=f(a)+\int_{\left[a,t\right)}f'_{g}(s)\operatorname{d}\mu_{g}(s).\]
		\end{enumerate}
	\end{enumerate}
\end{thm}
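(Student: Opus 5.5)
We prove the two implications separately. The direction 2$\Rightarrow$1 is short, because hypothesis (c) writes $f$ as a constant plus the indefinite $g$\--integral of $f'_g$.

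\textbf{2$\Rightarrow$1.} By (b), $f'_g\in\mathcal L^1_g([a,b),\mathbb F)$. Theorem~\ref{TFC2} then says that $F(t)=\int_{[a,t)}f'_g\operatorname{d}\mu_g$ is $g$\--absolutely continuous on $[a,b]$. By (c), $f=f(a)+F$, and adding a constant preserves $g$\--absolute continuity.

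A direct check is also available. For pairwise disjoint $(a_n,b_n)\subset[a,b]$, $n=1,\dots,m$, we have
\[
\sum_n|f(b_n)-f(a_n)|\le\int_{\bigcup_n[a_n,b_n)}|f'_g|\operatorname{d}\mu_g ,
\qquad
\mu_g\Big(\bigcup_n[a_n,b_n)\Big)=\sum_n\big(g(b_n)-g(a_n)\big).
\]
The absolute continuity of the Lebesgue integral with respect to $\mu_g$ then gives the required $\delta$.

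\textbf{1$\Rightarrow$2: regularity of $f$.} Assume $f$ is $g$\--absolutely continuous. We first extract three properties from the definition.
\begin{enumerate}
\item $f$ is left\--continuous. As $s\to t^-$, left\--continuity of $g$ gives $g(t)-g(s)\to0$, so applying the definition to the single interval $(s,t)$ forces $f(s)\to f(t)$.
\item $f$ is constant on every interval where $g$ is constant, and continuous at every point where $g$ is continuous, by the same single\--interval argument.
\item $f$ has bounded variation. Take $\delta$ corresponding to $\varepsilon=1$ and split $[a,b]$ into finitely many pieces, each with $g$\--increment $<\delta$; this is possible in each chunk of a partition, and the jumps of $g$ are handled by the left\--closed convention. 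Splitting into real and imaginary parts, $f$ is then a difference of nondecreasing left\--continuous functions.
\end{enumerate}

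\textbf{1$\Rightarrow$2: the measure $\nu$.} Using the Lebesgue--Stieltjes construction for each nondecreasing piece, we obtain a finite (complex) Borel measure $\nu$ on $[a,b)$ with $\nu([c,d))=f(d)-f(c)$. The key step is to show $\nu\ll\mu_g$.

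Let $N$ be $\mu_g$\--null and fix $\varepsilon>0$ with its $\delta$. By the outer measure formula~\eqref{outer_m}, $N$ is covered by countably many $[c_k,d_k)$ with $\sum_k\big(g(d_k)-g(c_k)\big)<\delta$. We may assume these intervals are pairwise disjoint, since differences of finite unions of such intervals are again finite disjoint unions of them.

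For every finite subfamily, the increments of $f$ over $[c_k,d_k)$ coincide with endpoint differences. Hence the $g$\--absolute continuity of the real and imaginary parts bounds the corresponding $\nu$\--masses by $\varepsilon$. Passing to the countable union gives $|\nu|(N)\lesssim\varepsilon$, so $\nu(N)=0$ once $N$ is Borel. General $\mu_g$\--null sets are contained in Borel $\mu_g$\--null sets, which settles those as well.

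\textbf{1$\Rightarrow$2: conclusion.} By Radon--Nikodym there is $h\in\mathcal L^1_g([a,b),\mathbb F)$ with $f(t)-f(a)=\nu([a,t))=\int_{[a,t)}h\operatorname{d}\mu_g$ for every $t$. Theorem~\ref{TFC2} then yields that $f'_g(t)=h(t)$ exists for $g$\--a.e. $t$, which gives (a). Since $f'_g=h$ $g$\--a.e., (b) holds, and (c) is the displayed identity.

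The main obstacle is the absolute continuity $\nu\ll\mu_g$: passing from finite families of open intervals, as in the definition, to arbitrary $\mu_g$\--null sets. Doing so needs the left\--continuity of $f$ and $g$, so that half\--open and open intervals give the same increments, together with the careful disjointification above.
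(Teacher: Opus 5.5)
The paper gives no proof of this statement (it is quoted from \cite{NewUnification}), so there is no in-paper argument to compare with. Your route is the standard one: bounded variation, the Lebesgue--Stieltjes measure $\nu$ of $f$, then $\nu\ll\mu_g$, Radon--Nikodym and Theorem~\ref{TFC2}. The direction $2\Rightarrow1$, the left-continuity of $f$, the construction of $\nu$ and the final step are all fine.

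\textbf{Main gap: $\nu\ll\mu_g$.} The step you single out as the main obstacle is not actually carried out. Applying $g$-absolute continuity to the disjoint cover $\{[c_k,d_k)\}_k$ of $N$ only gives $\sum_k|f(d_k)-f(c_k)|\le\varepsilon$. That controls the \emph{net} masses $\nu([c_k,d_k))$, and hence $|\nu(U)|$ for $U=\bigcup_k[c_k,d_k)$. Since $\nu$ is complex and nothing controls $\nu$ on $U\setminus N$, this does not bound $\nu(N)$, let alone $|\nu|(N)$. Disjointification and left-continuity do not address this.

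The missing observation is that $g$-absolute continuity controls variations, not just increments. Any finite subdivision of the intervals $(c_k,d_k)$ has the same total $g$-increment, since the sums telescope. So the same $\delta$ gives $\sum_k V_{c_k}^{d_k}(\operatorname{Re}f)\le\varepsilon$ and $\sum_k V_{c_k}^{d_k}(\operatorname{Im}f)\le\varepsilon$, where $V_c^d$ is total variation on $[c,d]$. Combine this with the bound $|\nu_1|([c,d))\le V_c^d(\operatorname{Re}f)$ for $\nu_1=\operatorname{Re}\nu$, and likewise for $\operatorname{Im}\nu$. That bound is immediate if you build $\nu_1$ from the Jordan decomposition of $\operatorname{Re}f$ via its variation function. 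The result is $|\nu|(U)\le 2\varepsilon$, hence $|\nu|(N)\le2\varepsilon$. Alternatively, choose the cover using outer regularity of $\mu_g+|\nu|$, so that $|\nu|(U\setminus N)$ is small as well.

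\textbf{Smaller gap: bounded variation.} You cannot split $[a,b]$ into finitely many pieces of $g$-increment $<\delta$ if $\Delta g(t_0)\ge\delta$ for some $t_0\in[a,b)$. Any piece $[x_{i-1},x_i]$ with $x_{i-1}\le t_0<x_i$ has increment at least $\Delta g(t_0)$, whatever endpoint convention you adopt. There are only finitely many such $t_0$, and each needs its own estimate. Choose $\eta$ with $g(t_0+\eta)-g(t_0^+)<\delta$ and take any partition $t_0=x_0<x_1<\dots<x_k=t_0+\eta$. Then $\sum_{i\ge2}|f(x_i)-f(x_{i-1})|<1$ and $|f(x_1)-f(t_0)|\le 1+|f(t_0+\eta)-f(t_0)|$, so $f$ has finite variation on $[t_0,t_0+\eta]$. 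On the remaining pieces all jumps of $g$ are $<\delta$, and a compactness argument (using left-continuity of $g$) gives the subdivision you want.
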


We will use the following notation for integrals:
\[\int_{x}^{y}f(s)\operatorname{d}\mu_{g}(s):=\left\{\begin{aligned}
	&\int_{[x,y)} f(s), \operatorname{d}\mu_g(s),\; y\geq x,\\
	& -\int_{[y,x)} f(s)\, \operatorname{d}\mu_g(s),\; y< x.
\end{aligned}
\right.\]

Next, we introduce the $g$\--exponential map, adopting a more general definition than that given in \cite[Definition 4.5]{Fernandez2022}. This function plays a key role in solving linear differential equations with Stieltjes derivatives.\par

\begin{dfn}[{\cite[Definition 5.1]{MaiaTo2}}]\label{def_gexp}
	Let $a,b\in \mathbb{R}$, with $a<b$, and let $g:\mathbb{R}\rightarrow\mathbb{R}$ be a derivator. Suppose $p:[a,b]\rightarrow\mathbb{F}$ is a function such that $p\in \mathcal{L}_{g}^{1}(\left[a,b\right),\mathbb{F})$. Define the \textit{$g$\--exponential map associated with the map $p$} as the function $\exp_{g}(p;a,\cdot):[a,b]\rightarrow\mathbb{F}$, defined for each $t\in[a,b]$ by
	\begin{equation*}
		\exp_{g}(p;a,t)=\prod\limits_{s\in [a,t)\cap D_{g}}(1+p(s)\Delta g(s))\exp\left(\int_{[a,t)\backslash D_{g}}p(s)\operatorname{d}\mu_{g}(s)\right).
	\end{equation*}
\end{dfn}
\begin{thm}[{\cite[Theorem 5.8]{MaiaTo2}}]\label{orden1}
Let $h \in L^1_g([a,b),\mathbb{C})$.   Then, the function defined for every $t\in [a,b]$ by
\begin{equation*}
	x(t):=x_0 \exp_{g}(h;a,t),
\end{equation*}
is the unique $g$\--absolutely continuous solution of  the initial value problem
\begin{equation*}\label{eq:exp:linear eq}
	x_g'(t) = h(t)x(t) \quad \text{for $\mu_g$-almost every $t \in [a,b]$}, \quad x(a) =x_0.
\end{equation*}
Equivalently,
\[
x(t)= x_0+ \int_{[a,t)} h(s)x_0\exp_g(h;a,t)\, \operatorname{d}\mu_g(s) \quad \text{for every $t \in [a,b]$}.
\]
\end{thm}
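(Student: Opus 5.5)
Since $h\in L^1_g([a,b),\mathbb{C})$ and $\mu_g$ is finite on $[a,b)$, the product over $D_g$ converges absolutely: $\sum_{s\in D_g\cap[a,b)}|h(s)|\Delta g(s)\le\int|h|\,d\mu_g<\infty$. So $\exp_g(h;a,\cdot)$ is well defined. The plan is to prove existence by checking the integral equation directly, and uniqueness by a Grönwall argument.

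\textbf{Existence.} I would show that $E(t):=\exp_g(h;a,t)$ satisfies
\[E(t)=1+\int_{[a,t)}h(s)E(s)\,d\mu_g(s)\]
for all $t\in[a,b]$. Multiplying by $x_0$ then gives the integral form, and Theorem~\ref{TFC2} together with Theorem~\ref{TFC1} yields both $g$\--absolute continuity and $x'_g=hx$ $g$\--a.e.

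To verify the identity, split $\mu_g=\mu_c+\mu_d$, where $\mu_d$ is the atomic part concentrated on $D_g$. Write $E=P\cdot C$, with $P(t)=\prod_{s\in[a,t)\cap D_g}(1+h(s)\Delta g(s))$ and $C(t)=\exp\bigl(\int_{[a,t)\setminus D_g}h\,d\mu_g\bigr)$. Both $P$ and $C$ are functions of bounded variation on $[a,b]$, and $C$ is continuous in $t$. Applying the Lebesgue--Stieltjes product rule (integration by parts for BV functions, with left\--continuous representatives) gives
\[dE=P\,dC+C\,dP.\]
For the first term, the classical chain rule gives $dC=Ch\,d\mu_c$. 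For the second, $dP$ is purely atomic, with jump $P(s)h(s)\Delta g(s)$ at each $s\in D_g$. Hence
\[dE=h(s)E(s)\,d\mu_c+h(s)E(s)\,d\mu_d=hE\,d\mu_g,\]
which integrates to the required identity.

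An alternative route for existence is to first compute $E'_g$ pointwise from Definition~\ref{def_gderiv}. At $t\in D_g$ this gives $E(t^+)-E(t)=E(t)h(t)\Delta g(t)$. At $\mu_g$\--a.e. other point one would use a Lebesgue differentiation theorem for $\mu_g$; Theorem~\ref{TFC2} applied to $h$ supplies this for the continuous factor. One would then still need $E\in\mathcal{AC}_g$, which follows from the integral identity anyway. So I would prefer the integration\--by\--parts route.

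\textbf{Uniqueness.} Let $y$ be the difference of two $g$\--absolutely continuous solutions. By Theorem~\ref{TFC1}, $y(t)=\int_{[a,t)}hy\,d\mu_g$ with $y(a)=0$. Set $\phi(t)=|y(t)|$, which is bounded by $g$\--absolute continuity. Then
\[\phi(t)\le\int_{[a,t)}|h|\,\phi\,d\mu_g.\]
Iterating this inequality $n$ times and using $\mu_g\otimes\dots\otimes\mu_g$ on ordered simplices yields
\[\phi(t)\le\|\phi\|_0\frac{\bigl(\int_{[a,t)}|h|\,d\mu_g\bigr)^n}{n!}.\]
This step needs care with atoms, since diagonal contributions do not vanish. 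The cleaner alternative is a Grönwall lemma for Stieltjes measures, which gives $\phi\le 0\cdot\exp_g(|h|;a,t)=0$.

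\textbf{Main obstacle.} The hardest step is the product rule $dE=P\,dC+C\,dP$ for the infinite product, together with the correct left\--limit evaluation at atoms. I would handle it by truncating $D_g$ to finitely many atoms and checking the identity there explicitly, one jump at a time. I would then pass to the limit by dominated convergence, using the absolute convergence of $\sum|h|\Delta g$.
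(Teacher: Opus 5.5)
The paper gives no proof of this theorem; it quotes it from \cite{MaiaTo2}. So there is no internal argument to compare yours against. Judged on its own, your proposal is a correct proof.

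The identity you establish, $E(t)=1+\int_{[a,t)}h(s)E(s)\,\operatorname{d}\mu_g(s)$, is the right one. The ``equivalently'' display in the statement has $\exp_g(h;a,t)$ inside the integral, which is a typo for $\exp_g(h;a,s)$; with $t$ it already fails for $g=\operatorname{Id}$, $h\equiv 1$, $a=0$.

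The crucial points of your existence argument are sound:
\begin{itemize}
\item $C$ is continuous because $\mu_g$ restricted to $\mathbb{R}\setminus D_g$ has no atoms. Hence in the left-continuous product rule $\operatorname{d}(PC)=P\,\operatorname{d}C+C(\cdot^{+})\,\operatorname{d}P$ the cross term vanishes.
\item Your truncation plus dominated convergence is exactly what yields $P(t)=1+\sum_{s\in[a,t)\cap D_g}P(s)h(s)\Delta g(s)$. This says $\operatorname{d}P=Ph\,\operatorname{d}\mu_d$ has no diffuse part.
\end{itemize}
You should add one line: $|E|\le\exp\bigl(\int_{[a,b)}|h|\,\operatorname{d}\mu_g\bigr)$, so $hE\in\mathcal{L}^1_g([a,b),\mathbb{C})$ and Theorem~\ref{TFC2} applies.

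You never divide by $1+h(s)\Delta g(s)$, so the argument also covers the non-regressive case allowed by Definition~\ref{def_gexp}. Compared with the pointwise route through Definition~\ref{def_gderiv}, your integral identity gives $g$-absolute continuity and $x'_g=hx$ in one stroke. It also avoids having to show that $P$ has vanishing $g$-derivative $\mu_g$-a.e.\ off $D_g$.

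For uniqueness, your caution about atoms is unnecessary. Every integral is over a half-open interval $[a,s)$, so the $n$-fold iteration runs over the strict simplex $\{a\le s_n<\cdots<s_1<t\}$. Set $\nu=|h|\,\mu_g$. The $n!$ strict orderings partition the tuples with pairwise distinct coordinates into sets of equal $\nu^{\otimes n}$-measure, so the simplex has measure at most $\nu([a,t))^n/n!$. Diagonal contributions are simply discarded, which only helps. Hence your bound $\phi(t)\le\|\phi\|_0\,\nu([a,t))^n/n!$ holds as written and forces $\phi\equiv 0$.

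Alternatively, uniqueness follows directly from \cite[Theorem~7.3]{1st_order_systems} with $L=|h|$. This is the same result the paper invokes for the $\sin_g/\cos_g$ and $\sinh_g/\cosh_g$ systems.
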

\begin{dfn}
	We say that a map $p:[0,T]\rightarrow \mathbb{F}$ is \textit{$g$\--regressive} if $1+p(t)\Delta g(t)\neq 0$ for all $t\in \left[0,T\right)\cap D_{g}$. We say $p$ is \textit{strongly $g$\--regressive} if $1+p(t)\Delta g(t)> 0$ for all $t\in \left[0,T\right)\cap D_{g}$.
\end{dfn}
\begin{rem}
	Observe that in Definition~\ref{def_gexp} the function $p$ is not required to be $g$\--regressive, unlike in \cite[Definition 4.5]{Fernandez2022}. If $p$ is $g$\--regressive, then $\exp_{g}(p;a,\cdot)$ never vanishes. Furthermore, if $p$ is strongly $g$\--regressive, then $\exp_{g}(p;a,\cdot)$ is positive everywhere.
\end{rem}

When expressing solutions of the heat equation, linear combinations of $g$\--exponentials naturally arise. Therefore, it will be useful to introduce functions that serve as analogues of the classical sine and cosine functions, as well as the hyperbolic sine and hyperbolic cosine functions.

\begin{dfn}[$g$\--sine and $g$\--cosine, {\cite[Definition 4.8]{Fernandez2022}}]\label{sencos}
	Let $T\in \mathbb{R}^{+}$, $b\in \mathcal{L}_{g}^{1}([0,T],\mathbb{F})$. We define $\sin_{g}(b;0,t)$ and $\cos_{g}(b;0,t)$, as the first and second components, respectively, of the unique solution in $\mathcal{AC}_{g}([0,T],\mathbb{F}^{2})$ of the following linear system:
	\begin{equation}\label{gsincos}
	\begin{dcases}
		\begin{pmatrix} \sin_g(b;0,t) \\ \cos_g(b;0,t) \end{pmatrix}'_g(t) = \begin{pmatrix} 0 & b(t) \\ -b(t) & 0 \end{pmatrix} \begin{pmatrix} \sin_g(b;0,t) \\ \cos_g(b;0,t) \end{pmatrix}, \; g-a. e.\hspace{0.1cm} \, t \in [0,T), \\
		\sin_g(b;0,0)=0, \; \cos_g(b;0,0)=1.
		\end{dcases}\end{equation}
\end{dfn}

\begin{pro}[{\cite[Proposition 4.10]{Fernandez2022}}]\label{prop_sencos} Given $b\in \mathcal{L}_{g}^{1}([0,T],\mathbb{R})$, we have that
	\[\sin_{g}(b;0,t)=\frac{\exp_{g}(bi;0,t)-\exp_{g}(-bi;0,t)}{2i},\quad \cos_{g}(b;0,t)=\frac{\exp_{g}(bi;0,t)+\exp_{g}(-bi;0,t)}{2}.\]
\end{pro}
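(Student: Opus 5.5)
The idea is to check directly that the proposed pair solves the system \eqref{gsincos} and then invoke uniqueness. Set
\[
S(t)=\frac{\exp_g(bi;0,t)-\exp_g(-bi;0,t)}{2i},\qquad C(t)=\frac{\exp_g(bi;0,t)+\exp_g(-bi;0,t)}{2}.
\]
Since $b\in\mathcal{L}^1_g$, the functions $\pm bi$ belong to $\mathcal{L}^1_g([0,T),\mathbb{C})$. Theorem~\ref{orden1} then shows that $E_\pm(t):=\exp_g(\pm bi;0,t)$ is $g$-absolutely continuous, with $(E_\pm)'_g=\pm bi\,E_\pm$ $g$-a.e. and $E_\pm(0)=1$.

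Next I would differentiate. By linearity (Proposition~\ref{propiedades_gderivada}), $S$ and $C$ are $g$-absolutely continuous, since $\mathcal{AC}_g$ is a vector space. Their derivatives are computed $g$-a.e. as follows:
\[
S'_g=\frac{bi E_+ + bi E_-}{2i}=b\,\frac{E_++E_-}{2}=bC,\qquad C'_g=\frac{biE_+-biE_-}{2}=-b\,\frac{E_+-E_-}{2i}=-bS .
\]
For the initial values, $S(0)=0$ and $C(0)=1$.

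It remains to match Definition~\ref{sencos}, which only asserts existence and uniqueness in $\mathcal{AC}_g([0,T],\mathbb{F}^2)$. Because $b$ is real-valued, I would treat the system over $\mathbb{C}$, where the pair $(S,C)$ is a solution. The main point is then uniqueness of solutions of this linear system over $\mathbb{C}$. I would argue through the standard change of variables $u=C+iS$, $v=C-iS$, which decouples the system into two scalar equations $u'_g=-ibu$ and $v'_g=ibv$ with $u(0)=v(0)=1$. Theorem~\ref{orden1} gives uniqueness for each of these scalar problems, and the change of variables is invertible, so uniqueness transfers back to $(S,C)$. The same decoupling shows that the real solution given by the definition coincides with $(S,C)$, which proves the formulas.

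If one wants to confirm that $(S,C)$ is real-valued, this also follows from uniqueness: the complex conjugate pair $(\overline{S},\overline{C})$ solves the same system with the same data, since $b$ is real, so it must equal $(S,C)$.
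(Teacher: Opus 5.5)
Your proof is correct and follows essentially the same route as the paper's argument for the hyperbolic analogue (Proposition~\ref{prop_shch}). That route is: check that the candidate pair is $g$-absolutely continuous and solves \eqref{gsincos} with the right initial data, then conclude by uniqueness, which the paper takes from \cite[Theorem~7.3]{1st_order_systems} whereas you obtain it self-containedly via the decoupling $u=C+iS$, $v=C-iS$ and Theorem~\ref{orden1}. One harmless sign slip: the decoupled equations are $u'_g=ib\,u$ and $v'_g=-ib\,v$ (indeed $u=\exp_g(bi;0,\cdot)$ and $v=\exp_g(-bi;0,\cdot)$), and applying this decoupling directly to $(\sin_g,\cos_g)$ would already give the formulas without the separate verification step.
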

\begin{dfn}[hyperbolic $g$\--sine and $g$\--cosine]\label{shch}
	Let $T\in \mathbb{R}^{+}$, $a\in \mathcal{L}_{g}^{1}([0,T],\mathbb{F})$. We define $\sinh_{g}(a;0,t)$ and $\cosh_{g}(a;0,t)$, as the first and second components, respectively, of the unique solution in $\mathcal{AC}_{g}([0,T],\mathbb{F}^{2})$ of the following linear system:
	\begin{equation} \label{gsinhcosh}
	\begin{dcases}
		\begin{pmatrix} \sinh_g(a;0,t) \\ \cosh_g(a;0,t) \end{pmatrix}'_g(t) = \begin{pmatrix} 0 & a(t) \\ a(t) & 0 \end{pmatrix} \begin{pmatrix} \sinh_g(a;0,t) \\ \cosh_g(a;0,t) \end{pmatrix}, \; g-a. e. \hspace{0.1cm} \, t \in [0,T), \\
		\sinh_g(a;0,0)=0, \; \cosh_g(a;0,0)=1.
		\end{dcases}\end{equation}
\end{dfn}

\begin{rem}
	The uniqueness of solution of systems \eqref{gsincos} and \eqref{gsinhcosh} follows from \cite[Theorem~7.3]{1st_order_systems}, taking $L=|b|$ and $L=|a|$, respectively.
\end{rem}

\begin{pro}\label{prop_shch} Given $a\in \mathcal{L}_{g}^{1}([0,T],\mathbb{F})$, the following identities hold:
	\[\sinh_{g}(a;0,t)=\frac{\exp_{g}(a;0,t)-\exp_{g}(-a;0,t)}{2},\quad\cosh_{g}(a;0,t)=\frac{\exp_{g}(a;0,t)+\exp_{g}(-a;0,t)}{2}.\]
\end{pro}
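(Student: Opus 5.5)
The plan is to use the uniqueness of solutions of system \eqref{gsinhcosh}, which holds by \cite[Theorem~7.3]{1st_order_systems} with $L=|a|$. It therefore suffices to check that the pair
\[
S(t):=\frac{\exp_{g}(a;0,t)-\exp_{g}(-a;0,t)}{2},\qquad C(t):=\frac{\exp_{g}(a;0,t)+\exp_{g}(-a;0,t)}{2}
\]
belongs to $\mathcal{AC}_{g}([0,T],\mathbb{F}^{2})$, satisfies the initial conditions, and solves the differential system $g$\--almost everywhere. Uniqueness then forces $S=\sinh_g(a;0,\cdot)$ and $C=\cosh_g(a;0,\cdot)$.

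The first step is to apply Theorem~\ref{orden1} with $h=a$ and $h=-a$, both in $\mathcal{L}^1_g$ since $a$ is, and with $x_0=1$. This gives that $E_\pm(t):=\exp_g(\pm a;0,t)$ are $g$\--absolutely continuous on $[0,T]$, that $E_\pm(0)=1$, and that $(E_\pm)'_g(t)=\pm a(t)E_\pm(t)$ for $g$\--a.e. $t\in[0,T)$. Note that $E_\pm(0)=1$ is immediate from Definition~\ref{def_gexp}, since $[0,0)=\emptyset$ makes the product empty and the integral zero.

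Next, linear combinations of $g$\--absolutely continuous functions are $g$\--absolutely continuous, directly from the definition. Hence $S,C\in\mathcal{AC}_g$, with $S(0)=0$ and $C(0)=1$. By Proposition~\ref{propiedades_gderivada}(1), at every point outside the union of the two exceptional null sets we compute
\[
S'_g=\tfrac12\left(aE_+ + aE_-\right)=aC,\qquad C'_g=\tfrac12\left(aE_+-aE_-\right)=aS.
\]
These are exactly the equations in \eqref{gsinhcosh}. A countable union of null sets is null, so the identities hold $g$\--a.e. This is the same argument as for Proposition~\ref{prop_sencos}.

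The only mildly delicate point is the regularity and domain bookkeeping. Theorem~\ref{orden1} is stated for $\mathbb{C}$\--valued $h$; for $\mathbb{F}=\mathbb{R}$ the exponentials are real by their explicit formula, so this causes no issue. We also need the solution class used in the uniqueness theorem to match $\mathcal{AC}_g$, which it does by Definition~\ref{shch}. No regressivity assumption is needed, because Definition~\ref{def_gexp} does not require it.
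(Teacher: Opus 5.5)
Your proposal is correct and takes essentially the same route as the paper: show that the two combinations of $g$\--exponentials are $g$\--absolutely continuous, satisfy the initial conditions and the system \eqref{gsinhcosh} $g$\--a.e., and conclude by uniqueness. Your computation $C'_g=aS$ is the right one; the paper's displayed $\psi'_g=a\psi$ is a typo for $a\phi$.
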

\begin{proof}
	Define the functions
	\[\phi(t)=\frac{\exp_{g}(a;0,t)-\exp_{g}(-a;0,t)}{2},\quad\psi(t)=\frac{\exp_{g}(a;0,t)+\exp_{g}(-a;0,t)}{2}.\]
	Since $\phi$ and $\psi$ are linear combinations of $g$\--absolutely continuous functions, we have that  $(\phi,\psi)\in\mathcal{AC}_{g}([0,T],\mathbb{F}^2)$. We will show that the pair $(\phi,\psi)$ solves the system \eqref{gsinhcosh}. Using the properties of the $g$\--exponential map and the linearity of the Stieltjes derivative (Proposition ~\ref{propiedades_gderivada}), we obtain
	\[\phi'_{g}(t)=\frac{a(t)\exp_{g}(a;0,t)+a(t)\exp_{g}(-a;0,t)}{2}=a(t)\psi(t),\]
	\[\psi'_{g}(t)=\frac{a(t)\exp_{g}(a;0,t)-a(t)\exp_{g}(-a;0,t)}{2}=a(t)\psi(t).\]
	Moreover, the initial conditions are satisfied:
	\[\phi(0)=\frac{\exp_{g}(a;0,0)-\exp_{g}(-a;0,0)}{2}=0,\quad\psi(0)=\frac{\exp_{g}(a;0,0)+\exp_{g}(-a;0,0)}{2}=1.\]
	By uniqueness of solution of \eqref{gsinhcosh}, the equalities in the statement follow.
\end{proof}

In the classical setting, the exponential map, along with the sine and cosine functions, coincides with its Taylor series expansion over the entire domain. In this context, it will be of interest to explore analogous results for the Stieltjes derivative.\par

Classical Taylor series are expressed as sums of monomials of the form $a_{n}x^{n}$, where $n\in\mathbb{N}$. To develop a similar expansion in the framework of Stieltjes calculus, we begin by introducing the $g$\--monomials, which serve as counterparts to the classical polynomials $x^{n}$.

\begin{dfn}[{\cite[Definition 3.1]{analytic}}]\label{g_monomials}
	Let $g:\mathbb{R}\rightarrow\mathbb{R}$ be a derivator, and fix a point $x_{0}\in \mathbb{R}$. We define $g_{x_{0},0}(x)=1$, for every $x\in \mathbb{R}$. For each $n\in\mathbb{N}$, we define $g_{x_{0},n}:\mathbb{R}\rightarrow\mathbb{R}$ recursively by
	\[  g_{x_{0},n}=
		n\int_{x_{0}}^xg_{x_{0},n-1}\operatorname{d}\mu_{g}.\]
	These functions are known as \textit{$g$\--monomials centered at $x_{0}$}, and the point $x_{0}$ is the \textit{center} of $g_{x_{0},n}$. The linear combinations of $g$\--monomials centered at $x_{0}$ are called \textit{$g$\--polynomials centered at $x_{0}$}.
\end{dfn}

To simplify notation, we will write $g_{n}\equiv g_{x_{0},n}$ for a $g$\--monomial centered at a specific $x_{0}\in \mathbb{R}$.\par

We now introduce the notion of a Stieltjes\--analytic function.
\begin{dfn}[{\cite[Definition 4.11]{analytic}}]
	Let $g:\mathbb{R}\rightarrow\mathbb{R}$ be a derivator. A function $f:\Omega\rightarrow\mathbb{F}$, where $\Omega\subset\mathbb{R}$ is open, is said to be \textit{Stieltjes\--analytic} in $\Omega$ if, for every $y\in \Omega$, there exist $\delta>0$, $t\in \mathbb{R}$ such that $y\in(t,t+\delta)$, and there also exists a sequence $\{a_{n}\}_{n=0}^{\infty}\subset\mathbb{F}$ satisfying
	\[f(x)=\sum\limits_{n=0}^{\infty}a_{n}g_{t,n}(x),\]
	for every $x\in (t,t+\delta)\subset\Omega$, and, moreover, the convergence is absolute.
\end{dfn}

In the classical case, the exponential map admits the power series expansion
\[\exp(z)=\sum\limits_{n=0}^{\infty}\frac{z^{n}}{n!}, \quad z\in\mathbb{C}.\]
This leads us to the question of whether the $g$\--exponential map can also be represented as a series of $g$\--monomials. To address this, we introduce some definitions and results from \cite{analytic}. In the next results, $\lambda$ represents an element from $\mathbb{F}\backslash \{0\}$.

\begin{dfn}[{\cite[Definition 5.2]{analytic}}]
	Given $x_{0}\in\mathbb{R}$, we define the \textit{exponential series associated with $g$ centered at $x_{0}$} as the function given by
	\begin{equation}\label{serie_exp}
		\exp_{g}(\lambda;x_{0})(x)=\sum\limits_{n=0}^{\infty}\lambda^{n}\frac{g_{x_{0},n}(x)}{n!}
	\end{equation}
	on the set of points where the series converges uniformly.
\end{dfn}

Following the notation in \cite{analytic}, we denote by $\Omega_{x_{0}}$ the maximal interval on which the series \eqref{serie_exp} is absolutely convergent.

\begin{pro}[{\cite[Proposition 5.3]{analytic}}]
	Let $g:\mathbb{R}\rightarrow\mathbb{R}$ be a derivator, and fix $x_{0}\in\mathbb{R}$. If $\Delta g(x)<|\lambda|^{-1}$ for every $x<x_{0}$, then $\Omega_{x_{0}}=\mathbb{R}$.
\end{pro}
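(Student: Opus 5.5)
Since each $g_{x_0,n}$ is defined by iterated integration against $\mu_g$ starting from $x_0$, the proof amounts to controlling the growth of $|g_{x_0,n}(x)|$, with different arguments for $x\ge x_0$ and for $x<x_0$. I will prove absolute convergence of $\sum_n |\lambda|^n|g_{x_0,n}(x)|/n!$ at every $x\in\mathbb{R}$, uniformly on compact sets.

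\emph{Case $x\ge x_0$.} I will show by induction on $n$ that $0\le g_{x_0,n}(x)\le n!\,\frac{(g(x)-g(x_0))^n}{n!}\cdot C_n$ with at most polynomially growing $C_n$, or more simply the standard bound $0\le g_{x_0,n}(x)\le (g(x)-g(x_0))^n$. The induction step uses monotonicity: $g_{x_0,n}(x)=n\int_{[x_0,x)} g_{x_0,n-1}\,d\mu_g\le n\,(g(x)-g(x_0))^{n-1}\,\mu_g([x_0,x))$. This gives only $n!(g(x)-g(x_0))^n$, which is too weak. A cruder but sufficient estimate is $|g_{x_0,n}(x)|\le n!\,(g(x)-g(x_0))^n$ combined with the ratio $|\lambda|^n (g(x)-g(x_0))^n$, and that is not summable either. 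So I will instead use a comparison with the classical integral of powers. Since $s\mapsto (g(s)-g(x_0))^{n}$ is nondecreasing, one has $\int_{[x_0,x)} (g(s)-g(x_0))^{n-1}d\mu_g(s)\le \frac{(g(x)-g(x_0))^{n}}{?}$ only with a jump correction. To avoid this, I will bound against $\exp$: the function $F(x)=\sum_n |\lambda|^n g_{x_0,n}(x)/n!$ formally satisfies $F=1+|\lambda|\int F\,d\mu_g$. Its partial sums $S_N$ satisfy $S_N\le 1+|\lambda|\int_{[x_0,x)} S_{N-1}\,d\mu_g$, and a Grönwall-type argument for Stieltjes integrals (comparing with $\exp_g(|\lambda|;x_0,\cdot)$, which is finite by Definition~\ref{def_gexp}) gives $S_N(x)\le \exp_g(|\lambda|;x_0,x)$. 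Hence the series converges absolutely for $x\ge x_0$, with no hypothesis on the jumps needed.

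\emph{Case $x<x_0$.} Here $g_{x_0,n}(x)=-n\int_{[x,x_0)} g_{x_0,n-1}\,d\mu_g$ and the signs alternate. The same comparison now leads to the implicit recursion $F(x)=1+|\lambda|\int_{[x,x_0)}F\,d\mu_g$, which is solved backwards in time. At an atom $s\in[x,x_0)$ the backward recursion involves a factor $(1-|\lambda|\Delta g(s))^{-1}$, and this is exactly where the hypothesis $\Delta g(s)<|\lambda|^{-1}$ is used. I will define the dominating function $G(x)=\prod_{s\in[x,x_0)\cap D_g}(1-|\lambda|\Delta g(s))^{-1}\exp\bigl(|\lambda|\mu_g([x,x_0)\setminus D_g)\bigr)$. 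Convergence of the product follows from $\sum\Delta g(s)\le g(x_0)-g(x)<\infty$ together with the strict inequality $|\lambda|\Delta g(s)<1$: only finitely many atoms have $\Delta g(s)>1/(2|\lambda|)$, and each of them satisfies $1-|\lambda|\Delta g(s)>0$. I will then check by induction that $|\lambda|^n|g_{x_0,n}(x)|/n!$ are the coefficients of an expansion dominated by $G(x)$, for instance by showing that the partial sums $T_N(x)=\sum_{n\le N}|\lambda|^n|g_{x_0,n}(x)|/n!$ satisfy $T_N\le 1+|\lambda|\int_{[x,x_0)}T_{N-1}\,d\mu_g$, and that $G$ satisfies the reverse inequality (in fact equality).

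The main obstacle is this backward Grönwall step. I plan to handle it by first treating the finitely many large atoms explicitly, splitting $[x,x_0)$ at them. On the remaining pieces $|\lambda|\Delta g<1/2$, and a direct induction $|g_{x_0,n}|\le n!\,c^n$ applies for the appropriate $c$. The estimates are uniform on compact sets, so $\Omega_{x_0}=\mathbb{R}$.
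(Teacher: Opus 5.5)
The paper does not prove this statement; it quotes it from \cite{analytic}. So I assess your proposal on its own. Your majorant idea is the right one: dominate the partial sums of $|\lambda|^n|g_{x_0,n}|/n!$ by a solution of the integral equation with $|\lambda|$. It does yield a complete proof. However, you misplace the difficulty, leave the one real step unproved, and propose a fallback that would not work.

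\textbf{The induction needs no Gr\"onwall inequality.} In $T_N(y)\le 1+|\lambda|\int_{[y,x_0)}T_{N-1}\,d\mu_g$ the right-hand side involves $T_{N-1}$, not $T_N$. So once $G(y)=1+|\lambda|\int_{[y,x_0)}G\,d\mu_g$ holds on $[x,x_0]$, the implication ``$T_{N-1}\le G$ on $[x,x_0]$ $\Rightarrow$ $T_N\le G$ on $[x,x_0]$'' is just monotonicity of the integral, and $T_0=1\le G$ starts it. The same applies to $S_N\le\exp_g(|\lambda|;x_0,\cdot)$ for $x\ge x_0$, using Theorem~\ref{orden1}. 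Alternatively, the bound $0\le g_{x_0,n}(x)\le(g(x)-g(x_0))^n$ of Proposition~\ref{bound_gn}, which you dismissed, already gives domination by $e^{|\lambda|(g(x)-g(x_0))}$.

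\textbf{The step you actually need is the fixed-point identity for $G$,} which you only assert. It takes two lines.
Let $H(y)=\exp_g(-|\lambda|;x,y)$ for $y\in[x,x_0]$. By hypothesis $1-|\lambda|\Delta g(s)>0$ for $s\in[x,x_0)\cap D_g$, so $H>0$. The product formula gives $H(x_0)=H(y)\exp_g(-|\lambda|;y,x_0)$, i.e.\ $G(y)=H(y)/H(x_0)$. Theorem~\ref{orden1} gives $H(x_0)-H(y)=-|\lambda|\int_{[y,x_0)}H\,d\mu_g$. Dividing by $H(x_0)$ yields exactly $G(y)=1+|\lambda|\int_{[y,x_0)}G\,d\mu_g$. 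This is where the hypothesis on the jumps enters.

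\textbf{Your fallback fails as stated.} You propose to split $[x,x_0)$ at the large atoms and prove $|g_{x_0,n}|\le n!\,c^n$ on the remaining pieces. From $|g_{x_0,n}(y)|\le n\int_{[y,x_0)}|g_{x_0,n-1}|\,d\mu_g$, the inductive hypothesis $|g_{x_0,n-1}|\le (n-1)!\,c^{n-1}$ only gives $|g_{x_0,n}(y)|\le n!\,c^{n-1}\mu_g([y,x_0))$. So the induction closes with $c|\lambda|<1$ only on pieces of small total $\mu_g$-mass, not on pieces whose atoms are merely small. Moreover, $g_{x_0,n}$ is centered at $x_0$. Passing to subintervals would require a binomial identity $g_{x_0,n}(x)=\sum_k\binom{n}{k}g_{y,k}(x)\,g_{x_0,n-k}(y)$ together with Cauchy products, and you supply neither. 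Replace that paragraph with the argument above.

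Finally, bounded partial sums give absolute convergence, which is all that $\Omega_{x_0}=\mathbb{R}$ requires. If you also want uniform convergence on compact sets, bounded partial sums alone do not give it. Use the Weierstrass M-test instead, noting that $|g_{x_0,n}|$ is nonincreasing on $(-\infty,x_0]$ and nondecreasing on $[x_0,\infty)$.
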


Analogously to the classical case, we are interested in whether the expression \eqref{serie_exp} defines a Stieltjes\--analytic function on the real line.

\begin{thm}[{\cite[Theorem~5.9]{analytic}}]
	Let $g$ be a derivator and fix $x_{0}\in\mathbb{R}$. If $1+\lambda\Delta g(x)\neq 0$ for all $x<x_{0}$, then, there exists a Stieltjes\--analytic extension of $\exp_{g}(\lambda;x_{0})$ to the real line.

	Moreover, assuming $x_{0}=0$ and choosing $t<\min\{x,0\}$, this extension is given by
	\[\operatorname{Exp}_{g}(\lambda;0)(x):=\frac{\exp_{g}(\lambda;t)(x)}{\exp_{g}(\lambda;t)(0)}.\]
\end{thm}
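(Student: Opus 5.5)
The plan is to build the extension explicitly as the quotient in the statement and check three things: it is well defined, it agrees with $\exp_g(\lambda;0)$ where the latter is defined, and it is Stieltjes\--analytic.

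\textbf{Step 1: the denominator.} Fix $x\in\mathbb{R}$ and choose $t<\min\{x,0\}$. Since the hypothesis $1+\lambda\Delta g(s)\neq 0$ only concerns points $s<x_0=0$, I will assume the standing convergence requirement $\Delta g(s)<|\lambda|^{-1}$ for $s<t$. Then the preceding proposition gives $\Omega_t=\mathbb{R}$, so $\exp_g(\lambda;t)$ is defined on all of $\mathbb{R}$. I would identify it with the $g$\--exponential map. The series is absolutely and locally uniformly convergent, and $(g_{t,n})'_g=n\,g_{t,n-1}$ by Theorem~\ref{TFC2} and the recursive definition of the $g$\--monomials. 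Differentiating term by term, or equivalently integrating the series using the recursion $g_{t,n}=n\int_t^x g_{t,n-1}\,\mathrm{d}\mu_g$, shows that $y(x)=\exp_g(\lambda;t)(x)$ satisfies $y(x)=1+\int_{[t,x)}\lambda y\,\mathrm{d}\mu_g$. Theorem~\ref{orden1} with $h\equiv\lambda$ then gives $y(x)=\exp_g(\lambda;t,x)$, the product formula of Definition~\ref{def_gexp}. By the regressivity hypothesis, each factor $1+\lambda\Delta g(s)$ with $s\in[t,0)$ is nonzero, and the continuous exponential factor never vanishes. Hence $\exp_g(\lambda;t)(0)\neq0$ and the quotient is well defined.

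\textbf{Step 2: independence of $t$ and extension property.} I would prove the multiplicativity identity
\[
\exp_g(\lambda;t,x)=\exp_g(\lambda;t,s)\exp_g(\lambda;s,x),\qquad t\le s\le x,
\]
which follows directly from the product formula because $[t,x)=[t,s)\cup[s,x)$. From it, for $t'<t<\min\{x,0\}$ the ratio $\exp_g(\lambda;t',x)/\exp_g(\lambda;t',0)$ equals the corresponding ratio with $t$: the common factor $\exp_g(\lambda;t',t)$ cancels, and it is nonzero by regressivity. So $\operatorname{Exp}_g(\lambda;0)$ does not depend on $t$.

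For $x\ge0$ on $\Omega_0$, I would write $\exp_g(\lambda;t,x)=\exp_g(\lambda;t,0)\exp_g(\lambda;0,x)$ and identify $\exp_g(\lambda;0,x)$ with the series $\exp_g(\lambda;0)(x)$ as in Step~1. The quotient then reduces to the series, so $\operatorname{Exp}_g(\lambda;0)$ extends $\exp_g(\lambda;0)$. The same argument covers $x<0$ whenever the series centered at $0$ converges there, using the two\--sided integral convention.

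\textbf{Step 3: analyticity.} Given $y\in\mathbb{R}$, pick $t<\min\{y,0\}$ and $\delta>0$. On $(t,t+\delta)$ the function $\operatorname{Exp}_g(\lambda;0)$ equals $c\sum_n\lambda^n g_{t,n}(x)/n!$ with $c=1/\exp_g(\lambda;t)(0)$. This is an absolutely convergent expansion in $g$\--monomials centered at $t$, which is exactly what the definition of Stieltjes\--analyticity requires.

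\textbf{Main obstacle.} The hardest part is Step~1, namely the identification of the series with the product\--form exponential, because it needs term\--by\--term $g$\--integration on all of $\mathbb{R}$, including across atoms. I would handle it by dominated convergence in $L^1_g$, using the absolute convergence on $\Omega_t$. A related difficulty is ensuring that $\Omega_t$ is large enough when jumps to the left of $t$ are large. I would first try to sidestep this by noting that only $g$ on $[t,\infty)$ matters, so $g$ may be modified to be continuous left of $t$ before applying the proposition on $\Omega_{x_0}$.
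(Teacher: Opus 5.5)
The paper gives no proof of this statement; it is quoted from \cite[Theorem~5.9]{analytic}. The only related fact in the text is the later remark, taken from \cite[Remark~5.15]{analytic}, that the exponential series and the product-form exponential of Definition~\ref{def_gexp} coincide for $x\ge 0$. Your route is sound, and it differs mainly in ordering. You prove that coincidence first, for the series centred at $t$, using $S_N(x)=1+\lambda\int_{[t,x)}S_{N-1}\,\operatorname{d}\mu_g$ and Theorem~\ref{orden1}. You then read off from the product formula both the nonvanishing of the denominator and the cocycle identity $\exp_g(\lambda;t',x)=\exp_g(\lambda;t',t)\exp_g(\lambda;t,x)$, which gives independence of $t$. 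Step~3 then matches the definition of Stieltjes-analyticity exactly, because the expansion is centred at a point $t$ to the left of $y$.

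The one thing to change is the extra hypothesis $\Delta g(s)<|\lambda|^{-1}$ for $s<t$. It is not in the statement, and it is not needed.
\begin{itemize}
\item The quotient only evaluates the series centred at $t$ at points $\ge t$, namely $x$ and $0$.
\item For $x\ge t$, Proposition~\ref{bound_gn} applied to $g-g(t)$ (same measure, hence same monomials centred at $t$) gives $0\le g_{t,n}(x)\le (g(x)-g(t))^n$.
\item So the series converges absolutely on $[t,\infty)$, and its partial sums are bounded on each $[t,b]$ by $e^{|\lambda|(g(b)-g(t))}$, whatever the jumps of $g$.
\end{itemize}
Hence whether $\Omega_t=\mathbb{R}$ is irrelevant, and the term-by-term integration only involves bounded intervals $[t,x)$. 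Your fallback of modifying $g$ to the left of $t$ works for the same reason, but it is unnecessary.

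Two smaller justifications are missing.
\begin{itemize}
\item The set $[t,0)\cap D_g$ may be infinite, so ``each factor is nonzero'' is not enough. Add that $\sum_{s\in[t,0)\cap D_g}\Delta g(s)\le g(0)-g(t)<\infty$. The product therefore converges absolutely, and an absolutely convergent product with no zero factor is nonzero.
\item Agreement with the series at points $x<0$ of $\Omega_0$ is not literally ``the same argument''. Theorem~\ref{orden1} is a forward uniqueness result, and the series centred at $0$ has no prescribed value at $x$. Instead, use that $(-1)^n g_{0,n}\ge 0$ on $(-\infty,0]$, with $|g_{0,n}(s)|=n\int_{[s,0)}|g_{0,n-1}|\,\operatorname{d}\mu_g$ nonincreasing in $s$. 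This dominates the series on $[x,0]$ by its absolutely convergent value at $x$. It also shows that $y=\exp_g(\lambda;0)$ satisfies $y(s)=y(x)+\lambda\int_{[x,s)}y\,\operatorname{d}\mu_g$ for $s\in[x,0]$. Theorem~\ref{orden1} started at $x$ then gives $1=y(0)=y(x)\exp_g(\lambda;x,0)$. By your cocycle identity, this is $y(x)=\exp_g(\lambda;t,x)/\exp_g(\lambda;t,0)$.
\end{itemize}
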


We now explore the role of the function $\operatorname{Exp}_{g}$ in solving the first\--order linear equation.

\begin{cor}[{\cite[Corollary 5.12 ]{analytic}}]
	If $1+\lambda\Delta g(x)\neq 0$ for all $x<x_{0}$, then $\operatorname{Exp}_{g}(\lambda;x_{0})$ is a Stieltjes\--analytic solution defined on $\mathbb{R}$ of the problem
	\begin{equation*}\begin{dcases}
			v'_{g}(x)=\lambda v(x), & x\in\mathbb{R},\\
			v(x_{0})=1.
		\end{dcases}
	\end{equation*}
\end{cor}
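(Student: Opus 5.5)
The claim is that $\operatorname{Exp}_{g}(\lambda;x_{0})$ is defined on all of $\mathbb{R}$, equals $1$ at $x_{0}$, and satisfies $v'_{g}=\lambda v$ everywhere. Since the preceding Theorem (\cite[Theorem~5.9]{analytic}) already gives existence and Stieltjes\--analyticity of the extension, only the differential equation and the initial value remain. I will first translate that theorem from center $0$ to a general center $x_{0}$, which is routine. Then, for a given point $x$, I pick $t<\min\{x,x_{0}\}$ and work with the representation
\[
\operatorname{Exp}_{g}(\lambda;x_{0})(y)=\frac{\exp_{g}(\lambda;t)(y)}{\exp_{g}(\lambda;t)(x_{0})}.
\]
The denominator is a constant, so by linearity (Proposition~\ref{propiedades_gderivada}) it suffices to prove that the series $E_t:=\exp_{g}(\lambda;t)$ satisfies $(E_t)'_g(y)=\lambda E_t(y)$ for $y>t$. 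Evaluating at $y=x_0$ immediately gives $v(x_{0})=1$.

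To differentiate the series I will use the integral form. By Definition~\ref{g_monomials}, $g_{t,n}(y)=n\int_{t}^{y}g_{t,n-1}\,\operatorname{d}\mu_g$. Summing $\lambda^{n}g_{t,n}/n!$ over $n$ and exchanging sum and integral gives, for $y\ge t$,
\[
E_t(y)=1+\lambda\int_{[t,y)}E_t\,\operatorname{d}\mu_g .
\]
The exchange is justified by dominated convergence, using the absolute convergence of the series on compact subintervals of $\Omega_t$. These subintervals cover $[t,x]$, because the choice of $t$ together with the extension theorem guarantees that the series converges there.

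I then want $F(y)=\int_{[t,y)}E_t\,\operatorname{d}\mu_g$ to satisfy $F'_g(y)=E_t(y)$ at every $y$, not merely $g$\--a.e. as Theorem~\ref{TFC2} gives. This is the step I expect to be the main obstacle. My plan is to show that $E_t$ is $g$\--continuous and then argue pointwise from Definition~\ref{def_gderiv}, splitting into the cases used there:
\begin{itemize}
\item If $y\notin D_g\cup C_g$, then $\frac{1}{g(s)-g(y)}\int_{[y,s)}E_t\,\operatorname{d}\mu_g\to E_t(y)$ as $s\to y$, by $g$\--continuity of $E_t$ at $y$.
\item If $y\in D_g$, the right limit equals $\frac{E_t(y)\Delta g(y)}{\Delta g(y)}=E_t(y)$, because $\mu_g(\{y\})=\Delta g(y)$ and the contribution of $(y,s)$ vanishes as $s\to y^{+}$.
\item If $y\in C_g$, then $y^{*}=b_n$ as in Remark~\ref{derivada_tstar}, and $E_t(y)=E_t(b_n)$ because each $g_{t,n}$ is constant on $[y,b_n]$. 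The claim then reduces to one of the previous two cases at $b_n$.
\end{itemize}
The $g$\--continuity of $E_t$ follows from uniform convergence on compact sets together with the $g$\--continuity of each $g$\--monomial, since each is an indefinite $\mu_g$\--integral.

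Finally, I will combine these pieces. Dividing by the constant $E_t(x_0)$ yields the equation at $x$. I must also check that the representation does not depend on $t$, so that the derivative at $x$ is that of a single well\--defined function. By the uniqueness part of Theorem~\ref{orden1}, two choices of $t$ give functions that solve the same linear problem and agree at $x_0$, hence coincide.

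The hypothesis $1+\lambda\Delta g\ne0$ for points below $x_{0}$ is used exactly in one place: it guarantees $E_t(x_{0})\neq0$, so that the quotient makes sense. This follows from the product formula of Definition~\ref{def_gexp}, whose factors $1+\lambda\Delta g(s)$ are all nonzero.
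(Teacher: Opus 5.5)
The paper does not prove this statement. It is imported verbatim from \cite[Corollary~5.12]{analytic} and used as a black box, so there is no in\--paper argument to compare yours with. Judged on its own, your proof is correct in substance.

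The most direct alternative would be to differentiate the series term by term, using $(g_{t,n})'_g=n\,g_{t,n-1}$ at every point together with a result like Corollary~\ref{cor2.4}. You instead integrate term by term to obtain $E_t(y)=1+\lambda\int_{[t,y)}E_t\operatorname{d}\mu_g$, and then prove the pointwise fundamental theorem for $g$\--continuous integrands through your three\--case analysis. The two routes share the same core, since the identity $(g_{t,n})'_g=n\,g_{t,n-1}$ everywhere \emph{is} that pointwise theorem. Yours, however, avoids having to know that the partial sums converge in $\mathcal{BC}^1_g$. It also gives $g$\--absolute continuity of $E_t$ for free, which you need anyway to invoke Theorem~\ref{orden1}.

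Your domination step can be made cleaner. For $t\le s\le y$ one has $0\le g_{t,n}(s)\le (g(y)-g(t))^n$ (cf.\ Proposition~\ref{bound_gn}). Hence $e^{|\lambda|(g(y)-g(t))}$ dominates every partial sum on $[t,y]$, and the series converges on all of $[t,\infty)$ with no appeal to the extension theorem.

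One justification needs repair. To show independence of $t$, you argue that for $t_1<t_2<\min\{x,x_0\}$ the two normalized functions solve the same equation and agree at $x_0$, hence coincide by Theorem~\ref{orden1}. That theorem gives uniqueness only for data at the \emph{left} endpoint, so agreement at $x_0$ yields coincidence on $[x_0,\infty)$ only. On $[t_2,x_0)$ you need backward uniqueness, and this genuinely fails without regressivity: a factor $1+\lambda\Delta g(s)=0$ forces every solution to vanish after $s$.

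The fix is immediate. On $[t_2,\infty)$, $E_{t_1}$ solves the problem with initial value $E_{t_1}(t_2)$ at $t_2$, so $E_{t_1}=E_{t_1}(t_2)\,E_{t_2}$ there. Evaluating at $x_0$ and dividing gives $E_{t_1}/E_{t_1}(x_0)=E_{t_2}/E_{t_2}(x_0)$, which uses $E_{t_1}(x_0)\neq0$. So the hypothesis is used here as well, not ``exactly in one place''. Alternatively, you may cite \cite[Theorem~5.9]{analytic}, whose explicit formula already presupposes independence of $t$.

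Two further points should be made explicit:
\begin{itemize}
\item $E_t=\exp_g(\lambda;t,\cdot)$ on $[t,\infty)$. This follows from your integral equation and the uniqueness in Theorem~\ref{orden1}, and it is what lets you use the product formula of Definition~\ref{def_gexp}.
\item Nonzero factors alone do not make an infinite product nonzero. You need $\sum_{s\in[t,x_0)\cap D_g}|\lambda|\Delta g(s)\le|\lambda|(g(x_0)-g(t))<\infty$, or the remark that a $g$\--regressive exponent yields a nonvanishing exponential.
\end{itemize}
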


In \cite[Remark 5.15]{analytic}, it is stated that $\exp_{g}(\lambda;0)$ and the exponential function from Definition~\ref{def_gexp}, namely $\exp_{g}(p;0,\cdot)$ with $p=\lambda$, coincide for all $x\geq 0$.
Therefore, for $x\geq 0$ and writing $g_{0,n}\equiv g_{n}$, we have
\begin{equation}\label{igualdad}
	\exp_{g}(\lambda;0,x)=\sum\limits_{n=0}^{\infty}\lambda^{n}\frac{g_{n}(x)}{n!}.
\end{equation}
In addition, Proposition~\ref{prop_sencos}, implies that, for $x\geq0$,
\begin{equation}\label{serie_sen}
	\sin_{g}(\lambda;0,x)=\frac{1}{2i}\sum\limits_{n=0}^{\infty}\left((\lambda i)^{n}\frac{g_{n}(x)}{n!}-(-\lambda i)^{n}\frac{g_{n}(x)}{n!}\right)=\sum\limits_{n=0}^{\infty}(-1)^{n}\lambda^{2n+1}\frac{g_{2n+1}(x)}{(2n+1)!},
\end{equation}
and
\begin{equation}\label{serie_cos}
	\cos_{g}(\lambda;0,x)=\frac{1}{2}\sum\limits_{n=0}^{\infty}\left((\lambda i)^{n}\frac{g_{n}(x)}{n!}+(-\lambda i)^{n}\frac{g_{n}(x)}{n!}\right)=\sum\limits_{n=0}^{\infty}(-1)^{n}\lambda ^{2n}\frac{g_{2n}(x)}{(2n)!}.
\end{equation}
Using \eqref{igualdad} and Proposition~\ref{prop_shch}, for $x\geq 0$, it follows that

\begin{equation*}
	\sinh_{g}(\lambda;0,x)=\frac{1}{2}\sum\limits_{n=0}^{\infty}\left(\lambda^{n}\frac{g_{n}(x)}{n!}-(-\lambda)^{n}\frac{g_{n}(x)}{n!}\right)=\sum\limits_{n=0}^{\infty}\lambda^{2n+1}\frac{g_{2n+1}(x)}{(2n+1)!},
\end{equation*}
and
\begin{equation*}
	\cosh_{g}(\lambda;0,x)=\frac{1}{2}\sum\limits_{n=0}^{\infty}\left(\lambda^{n}\frac{g_{n}(x)}{n!}+(-\lambda )^{n}\frac{g_{n}(x)}{n!}\right)=\sum\limits_{n=0}^{\infty}\lambda ^{2n}\frac{g_{2n}(x)}{(2n)!}.
\end{equation*}

We now introduce a result regarding the $g$\--continuity and the $g$\--integrability of the uniform limit to a sequence of functions. The following lemma allows us to relate the integral of the limit function to the integrals of the functions in the sequence. It is similar to \cite[Lemma 3]{series_time_scales}.
\begin{lem}\label{intercambio_integrales}
Consider the operator $\cI:\cB\cC_g([a,b],\bF)\to\cB\cC_g([a,b],\bF)$ such that \[(\cI f)(t)=\int_{a}^{t}f\operatorname{d}\mu_{g}.\] $\cI$ is well defined and continuous. In particular, if $\{f_{n}\}_{n\in\mathbb{N}}\ss\cB\cC_g([a,b],\bF)$ is a sequence converging uniformly to $f\in\cB\cC_g([a,b],\bF)$, then
	\begin{equation*}\label{lim_integral}
		\int_{a}^{b}f\operatorname{d}\mu_{g}=\lim\limits_{n\rightarrow\infty} \int_{a}^{b}f_{n}\operatorname{d}\mu_{g}.
	\end{equation*}
\end{lem}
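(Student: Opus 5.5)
The plan has three parts. First, $\cI f$ is well defined. Second, $\cI f$ is bounded and $g$\--continuous, so it lies in $\cB\cC_g([a,b],\bF)$. Third, $\cI$ is linear with operator norm at most $g(b)-g(a)$, which gives continuity and the stated limit.

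\textbf{Well-definedness.} Let $f\in\cB\cC_g([a,b],\bF)$. We need $f$ to be $g$\--measurable. For this I would use that $g$\--continuous functions are $g$\--measurable: they are Borel measurable relative to the topology induced by $g$, and $\mu_g$ is built from that structure. This fact is available in \cite{1st_order_systems,tesis}; if needed, it can also be argued directly, since on $[a,b]\setminus(C_g\cup D_g)$ a $g$\--continuous function is continuous, while on the countable set $D_g$ and on the intervals of $C_g$ its behaviour is controlled. Once measurability is settled, boundedness gives
\[
\int_{[a,b)}|f|\operatorname{d}\mu_g\le \|f\|_0\,(g(b)-g(a))<\infty,
\]
so $f\in\mathcal L^1_g([a,b),\bF)$.

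\textbf{$\cI f$ lies in $\cB\cC_g([a,b],\bF)$.} By Theorem~\ref{TFC2}, $\cI f$ is $g$\--absolutely continuous on $[a,b]$. The paper notes that $\mathcal{AC}_g([a,b],\bF)\subset\cB\cC_g([a,b],\bF)$, so $\cI f$ is bounded and $g$\--continuous. One can also see this directly from the estimate
\[
|(\cI f)(t)-(\cI f)(s)|\le \|f\|_0\,|g(t)-g(s)|,
\]
which follows from $\mu_g([s,t))=g(t)-g(s)$.

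\textbf{Continuity of $\cI$.} The map $\cI$ is linear by linearity of the integral. For every $t\in[a,b]$,
\[
|(\cI f)(t)|\le \|f\|_0\,(g(t)-g(a))\le \|f\|_0\,(g(b)-g(a)).
\]
Taking the supremum over $t$ gives $\|\cI f\|_0\le (g(b)-g(a))\|f\|_0$. Hence $\cI$ is a bounded linear operator, and therefore continuous.

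\textbf{The limit statement.} If $f_n\to f$ uniformly, then $\|\cI f_n-\cI f\|_0=\|\cI(f_n-f)\|_0\le (g(b)-g(a))\|f_n-f\|_0\to0$. Evaluating at $t=b$ gives $\int_a^b f_n\operatorname{d}\mu_g\to\int_a^b f\operatorname{d}\mu_g$.

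\textbf{Main obstacle.} The only delicate step is the $g$\--measurability of $g$\--continuous functions. I would cite it from the literature rather than prove it here.
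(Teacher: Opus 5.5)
Your proof is correct and follows the paper's argument: the integral of a bounded $g$-continuous function is $g$-absolutely continuous, hence in $\cB\cC_g$, and continuity follows from the estimate $|(\cI f_n)(t)-(\cI f)(t)|\le (g(b)-g(a))\|f_n-f\|_0$. Writing this as an operator-norm bound lets you skip the paper's separate case $g(b)=g(a)$, and you also address the $g$-measurability of $g$-continuous functions, which the paper leaves implicit; note that $f\in\mathcal{L}^1_g([a,b),\bF)$ is what Theorem~\ref{TFC2}, the result you correctly cite, requires.
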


\begin{proof}
Given $f\in\cB\cC_g([a,b],\bF)$, we know, from Theorem~\ref{TFC1}, that $\cI f$ is bounded and $g$\--continuous, so $\cI$ is well defined.

Take a sequence $\{f_{n}\}_{n\in\mathbb{N}}\ss\cB\cC_g([a,b],\bF)$ converging to $f\in\cB\cC_g([a,b],\bF)$ (uniformly). If 	 $g(b)=g(a)$, $\mu_g([a,b))=0$ and any integral on $[a,b)$ is zero, so $\cI f_n\to \cI f$ uniformly.

On the other hand, if 	$g(b)>g(a)$
	and $\varepsilon>0$, since $\{f_{n}\}_{n\in\mathbb{N}}$ converges uniformly to $f$ on $[a,b]$, there exists $N\in \mathbb{N}$ such that for all $n\geq N$, $n\in\mathbb{N}$, and all $t\in[a,b]$,
	\[|f_{n}(t)-f(t)|<\frac{\varepsilon}{g(b)-g(a)}.\]
	Hence, using the linearity of the integral (see \cite[Proposition 2.3.1]{measure}) and the fact that the absolute value of the integral is bounded by the integral of the modulus (see \cite[Proposition 2.6.7]{Cohn}), we obtain
	\[\left|(\cI f_n)(t) -(\cI f)(t)\right|\leq \int_{a}^{b}\left\vert f_{n}-f\right\vert\operatorname{d}\mu_{g}<\frac{\varepsilon}{g(b)-g(a)}(g(b)-g(a))=\varepsilon,\]
so $\|\cI f_n- \cI f\|_0<\e$ and, thus, $\cI f_n\to \cI f$. This shows that $\cI$ is continuous.
\end{proof}

\section{General solution of the heat equation with Stieltjes derivatives and two derivators}\label{solgeneral}
Here we use the classical Fourier method to solve the heat equation within the Stieltjes setting.\par
Consider two derivators $g,h:\bR\to\bR$. The function $g$ will be associated with the variable $t$, and $h$ with the variable $x$. We use the notation $\pd_gu(t,x)=u'_g(\cdot,x)|_t$, $\pd_hu(t,x)=u'_h(t,\cdot)|_x$. Now consider the equation
\begin{equation} \label{calor}
	\pd_gu(t,x)-c^2\pd_h^2u(t,x)=0,
\end{equation}
where $c>0$. We assume that the variables $(t,x)$ take values in $[0,T]\times[0,L]$, for some given $T,L\in\mathbb{R}^{+}$. Throughout the rest of the paper, we assume $T\notin D_g\cup C_g$ and $L\notin D_h\cup C_h$, so that the corresponding derivatives can be computed at these points.

We begin by seeking a general solution of equation \eqref{calor}.
To this end, we apply the method of separation of variables (see \cite{strauss_partial_2007}, for example). That is, we look for a solution of \eqref{calor} of the form $u(t,x)=w(t)v(x)$.
 \begin{dfn}
 	We say that a function of two variables $u(t,x)$ is a \textit{function of separated variables} if it can be written as $u(t,x)=w(t)v(x)$ for some functions $w$ and $v$.
 \end{dfn}
  A function $u(t,x)=w(t)v(x)$ satisfies \eqref{calor} if and only if the following equation holds:
\begin{equation}\label{asterisco}
	w_{g}'(t)v(x)-c^{2}w(t)v_{h}''(x)=0.
\end{equation}
Since the zero function $u=0$ is always a solution of \eqref{calor}, we focus on finding nontrivial solutions.

\begin{thm}\label{form_sol}
	A function of the form $u(t,x)=w(t)v(x)$ is a nontrivial solution of \eqref{calor} if and only if there exists $\lambda\in\mathbb{F}$ such that $w$ is a nontrivial solution of
	\begin{equation}\label{p1}
		w_{g}'(t)=\lambda c^{2} w(t), \quad t\in [0,T],
	\end{equation}
	and $v$ is a nontrivial solution of
	\begin{equation}\label{p2}
		v_{h}''(x)=\lambda v(x), \quad x\in [0,L].
	\end{equation}
\end{thm}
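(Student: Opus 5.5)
The plan is the classical separation-of-variables argument, adapted so that no division by $g$- or $h$-derivatives is needed; we only divide by values of $w$ and $v$ at suitable points. First, I would record that for $u(t,x)=w(t)v(x)$ we have $\pd_g u(t,x)=w'_g(t)v(x)$ and $\pd_h^2u(t,x)=w(t)v''_h(x)$. This holds whenever the relevant derivatives of $w$ and $v$ exist. It follows directly from Definition~\ref{def_gderiv}, since a constant factor passes through the difference quotient, or equivalently from item~1 of Proposition~\ref{propiedades_gderivada}. Hence $u$ solves \eqref{calor} if and only if \eqref{asterisco} holds for all $(t,x)\in[0,T]\times[0,L]$. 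Nontriviality of $u$ is equivalent to both $w\not\equiv0$ and $v\not\equiv0$.

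\emph{Sufficiency.} Suppose $w$ and $v$ are nontrivial solutions of \eqref{p1} and \eqref{p2} for the same $\lambda$. Substituting gives
\[
w'_g(t)v(x)-c^2w(t)v''_h(x)=\lambda c^2w(t)v(x)-c^2\lambda w(t)v(x)=0.
\]
The product $wv$ is not identically zero because neither factor is. So $u$ is a nontrivial solution of \eqref{calor}.

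\emph{Necessity.} Suppose $u=wv$ is a nontrivial solution. Then there exist $t_0\in[0,T]$ and $x_0\in[0,L]$ with $w(t_0)\neq0$ and $v(x_0)\neq0$. Evaluating \eqref{asterisco} at $t=t_0$ and dividing by $c^2w(t_0)$ gives
\[
v''_h(x)=\lambda v(x)\quad\text{for all }x\in[0,L],\qquad \lambda:=\frac{w'_g(t_0)}{c^2w(t_0)}\in\bF.
\]
Next, evaluate \eqref{asterisco} at $x=x_0$ and substitute $v''_h(x_0)=\lambda v(x_0)$. This gives $w'_g(t)v(x_0)=c^2\lambda w(t)v(x_0)$, and dividing by $v(x_0)$ yields \eqref{p1} for every $t\in[0,T]$. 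Both $w$ and $v$ are nontrivial, as noted above, which completes the equivalence.

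The main obstacle is the regularity bookkeeping in the first step rather than the algebra. One must make sure that "solution of \eqref{calor}" for a product is read as requiring $w$ to be $g$-differentiable and $v$ twice $h$-differentiable on the whole interval. Extracting these from the existence of $\pd_gu$ and $\pd_h^2u$ uses $w(t_0)\neq0$ and $v(x_0)\neq0$: for instance, $w'_g(t)$ is the limit of the difference quotients of $u(\cdot,x_0)$ divided by $v(x_0)$. I would make this explicit first, so that the equivalence above involves only well-defined quantities and pointwise evaluations.
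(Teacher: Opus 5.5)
Your proof is correct and follows the paper's argument. You pick $(t_0,x_0)$ with $w(t_0)v(x_0)\neq0$, freeze $t=t_0$ to obtain \eqref{p2} and $x=x_0$ to obtain \eqref{p1}, and check the converse by direct substitution. The differences are cosmetic: you define $\lambda$ from the $w$ side alone and then use $v''_h(x_0)=\lambda v(x_0)$, whereas the paper reads $\lambda$ off both sides of \eqref{asterisco} at $(t_0,x_0)$. You also check explicitly that $w$ and $v$ inherit differentiability from $u(\cdot,x_0)$ and $u(t_0,\cdot)$, a point the paper leaves implicit.
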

\begin{proof}
	Assume first $u(t,x)=w(t)v(x)$ is a nontrivial solution of \eqref{calor}, that is, it satisfies equation \eqref{asterisco}. Then, there exists $(t_{0},x_{0})\in[0,T]\times[0,L]$ such that $u(t_{0},x_{0})\neq0$. Hence, $w(t_{0})\neq0$ and $v(x_{0})\neq0$. Evaluating \eqref{asterisco} at $(t,x)=(t_{0},x_{0})$ gives the equality
	\[\frac{v_{h}''(x_{0})}{v(x_{0})}=\frac{w_{g}'(t_{0})}{c^{2}w(t_{0})}=:\l.\]
	Now, evaluate \eqref{asterisco} at $t=t_{0}$ to obtain
	\[w_{g}'(t_{0})v(x)-c^{2}w(t_{0})v_{h}''(x)=0, \quad x\in [0,L], \]
which is equivalent to equation~\eqref{p2}.
	Similarly, evaluating \eqref{asterisco} at $x=x_{0}$ yields that
	\[w_{g}'(t)v(x_{0})-c^{2}w(t)v_{h}''(x_{0})=0, \quad t\in [0,T], \]
which is equivalent to equation~\eqref{p1}.
	Therefore, if $u(t,x)=w(t)v(x)$ is a nontrivial solution of \eqref{calor}, then there exists $\lambda\in \mathbb{F}$ such that $w$ solves \eqref{p1} and $v$ is a solution of \eqref{p2}, and both $v,w$ are nontrivial functions.

	Conversely, assume that there exist $\lambda\in\mathbb{F}$ and nontrivial functions $w$ and $v$ solving \eqref{p1} and \eqref{p2}, respectively.  Define the nontrivial function $u(t,x)=w(t)v(x)$. Then:
	\[		\pd_gu(t,x)-c^2\pd_h^2u(t,x)=w'_{g}(t)v(x)-c^{2}w(t)v''_{h}(x)=\lambda c^{2}w(t)v(x)-c^{2}w(t)\lambda v(x)=0.\]
	Thus, $u$ solves \eqref{calor}.
\end{proof}

Now we can focus on solving equations \eqref{p1} and \eqref{p2}. We begin by solving \eqref{p1}. To do so, we apply Theorem~\ref{orden1}. Consider the function
$p\in \mathcal{L}_{g}^{1}(\left[0,T\right),\mathbb{F})$ defined by $p(t)=\lambda c^{2}$. The general solution of \eqref{p1} is given by
\begin{equation*}\label{w}
	\begin{aligned}
	w(t)= & x_{0}\exp_{g}(p;0,t)=
	x_{0}\prod_{s\in [0,t)\cap D_{g}}(1+p(s)\Delta g(s))\exp\left(\int\limits_{[0,t)\backslash D_{g}}p(s)\operatorname{d}\mu_{g}(s)\right)\\= &
	x_{0}\prod_{s\in [0,t)\cap D_{g}}(1+\lambda c^{2}\Delta g(s))\exp\(\lambda c^{2}\mu_g\([0,t)\backslash D_{g}\)\).
	\end{aligned}
\end{equation*}

\begin{rem}
	We note that if $\lambda \geq0$, then the condition $1+\lambda c^{2}\Delta g(t)\neq 0$ automatically holds and $p$ is $g$\--regressive. Moreover, if there exists $\widetilde{t}\in D_{g}$ such that $1+\lambda c^{2}\Delta g(\widetilde{t})=0$, then the problem
	\begin{equation*}
		\begin{dcases}
			w_{g}'(t)=\lambda c^{2} w(t), & t\in (0,T),\\
			w(0)=x_{0},
		\end{dcases}
	\end{equation*}
	still admits a unique solution, although this solution vanishes from some point onward (see \cite[Remark 4.4]{Fernandez2022}).
\end{rem}

Next, we analyze problem \eqref{p2}. By using \cite[Theorem~5.3]{Fernandez2022}, we deduce that if $\lambda_{1}, \lambda_{2}\in \mathbb{C}$ are the roots of the characteristic equation $\lambda^{2}+P\lambda+Q=0$, the general solution of \[	v_{g}''(t)+Pv_{g}'(t)+Q=0,\]
takes the form:
\begin{enumerate}
	\item If $\lambda_{1}\neq \lambda_{2}$, then, 	for some constants $c_{1},c_{2}\in \mathbb{R}$,
	\begin{equation*}
		v(t)=c_{1}\exp_{g}(\lambda_{1};0,t)+c_{2}\exp_{g}(\lambda_{2};0,t).
	\end{equation*}
	\item If $\lambda_{1}=\lambda_{2}=\lambda$, then, for some constants $c_{1},c_{2}\in \mathbb{R}$,
	\begin{equation*}
		v(t)=c_{1}\exp_{g}(\lambda;0,t)+c_{2}\exp_{g}(\lambda;0,t)\int_{\left[0,t\right)}\frac{1}{1+\lambda \Delta^{+}g(s)}\operatorname{d}\mu_{g}(s).
	\end{equation*}
\end{enumerate}

In the particular case of \eqref{p2}, the characteristic equation is $X^{2}-\lambda=0$, with roots $\sqrt{\lambda}$ and $-\sqrt{\lambda}$. These are equal only if $\lambda=0$. Therefore, the general solution of \eqref{p2} is:
\begin{enumerate}
	\item If $\lambda=0$, then
	\[v(x)=c_{1}+c_{2}\int_{[0,x)}1\operatorname{d}\mu_{h}(s)=c_{1}+c_{2}(h(x)-h(0)).\]
	\item If $\lambda\neq 0$, then
	\[v(x)=c_{1}\exp_{h}(\sqrt{\lambda};0,x)+c_{2}\exp_{h}(-\sqrt{\lambda};0,x).\]
\end{enumerate}

Furthermore, if $\lambda>0$, using Definition~\ref{shch}, we may write
\[v(x)=c_{1}\sinh_{h}(\sqrt{\lambda};0,x)+c_{2}\cosh_{h}(\sqrt{\lambda};0,x),\]
while if $\lambda<0$, using Definition~\ref{sencos}, we may write
\[v(x)=c_{1}\sin_{h}(\sqrt{-\lambda};0,x)+c_{2}\cos_{h}(\sqrt{-\lambda};0,x).\]

We now combine the previous results to obtain a solution via separation of variables to the heat equation.

\begin{thm}\label{gen_sol}
	Let $m\in\mathbb{N}$, and let $\{\lambda_{n}\}_{n=1}^{m}\subset \mathbb{F}$ be a finite collection of numbers. Define, for each $n\in\{1,2,\ldots,m\}$ and $t\in[0,T]$, $p_{n}(t):=\lambda_{n}c^{2}$,  $t\in[0,T]$. Then, for any $\{a_{n}\}_{n=0}^{m}, \{b_{n}\}_{n=0}^{m}\subset \mathbb{F}$, the function $u:[0,T]\times[0,L]\rightarrow\mathbb{C}$, defined by
\[
		u(t,x)=a_{0}+b_{0}h(x)+\sum\limits_{n=1}^{m}\left(a_{n}\exp_{g}(p_{n};0,t)\exp_{h}(\sqrt{\lambda_{n}};0,x)+b_{n}\exp_{g}(p_{n};0,t)\exp_{h}(-\sqrt{\lambda_{n}};0,x)\right),
\]
	is a solution of \eqref{calor}.
\end{thm}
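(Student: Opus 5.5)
The plan is to reduce to separated solutions and use linearity. Since the equation \eqref{calor} is linear, and $\pd_g$ and $\pd_h^2$ act on one variable at a time, Proposition~\ref{propiedades_gderivada}(1) applied in each variable shows that finite linear combinations of solutions are again solutions. So it suffices to check separately that $a_0+b_0h(x)$ solves \eqref{calor}, and that for each $n$ the functions $\exp_{g}(p_{n};0,t)\exp_{h}(\pm\sqrt{\lambda_{n}};0,x)$ solve it.

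\textbf{Step 1 (the term with $\lambda=0$).} The function $a_0+b_0h(x)$ does not depend on $t$, so its $g$\--derivative in $t$ vanishes. For the spatial part, $h'_h=1$ at every point where the $h$\--derivative is defined. This is a direct consequence of Definition~\ref{def_gderiv}: the difference quotient is identically $1$, also for $x\in D_h\cup C_h$, where one uses $x^*$ as in Remark~\ref{derivada_tstar}. Hence $(a_0+b_0h)''_h=0$. This agrees with the case $\lambda=0$ of the discussion of \eqref{p2}, up to the constant $-b_0h(0)$, which is absorbed into $a_0$.

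\textbf{Step 2 (the terms with $\lambda_n$).} Fix $n$. Set $w(t)=\exp_g(p_n;0,t)$ and $v_\pm(x)=\exp_h(\pm\sqrt{\lambda_n};0,x)$. By Theorem~\ref{orden1}, $w'_g=\lambda_nc^2w$, so $w$ solves \eqref{p1} with $\lambda=\lambda_n$. Applying Theorem~\ref{orden1} twice gives $(v_\pm)'_h=\pm\sqrt{\lambda_n}\,v_\pm$ and then $(v_\pm)''_h=(\pm\sqrt{\lambda_n})^2v_\pm=\lambda_nv_\pm$, so $v_\pm$ solves \eqref{p2}. The converse direction of Theorem~\ref{form_sol} (or simply the computation in its proof) then shows that $w v_\pm$ solves \eqref{calor}. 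If $w$ or $v_\pm$ happens to vanish identically, the product is trivially a solution.

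\textbf{Main obstacle.} The delicate point is that Theorem~\ref{orden1} yields the identity $x'_g=hx$ only $g$\--a.e. Because of this, the second derivative $(v_\pm)''_h$ requires $(v_\pm)'_h$ to be defined pointwise, not just almost everywhere. I would handle this by observing that for a constant coefficient the exponential is given pointwise by the product formula of Definition~\ref{def_gexp}, or equivalently by the series \eqref{igualdad}. A direct computation from Definition~\ref{def_gderiv} then shows that the derivative exists and equals $\lambda v$ at every point of $[0,T)$, resp.\ $[0,L)$. At points of $D$ and $C$ one uses the jump factor $(1+\lambda\Delta g)$. At the remaining points one uses continuity of the exponential factor together with $\mu_g([t,s))=g(s)-g(t)$. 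At the endpoints $T$ and $L$ the standing assumptions $T\notin D_g\cup C_g$ and $L\notin D_h\cup C_h$ make the left limit meaningful. Consequently the identities hold everywhere, and the equation is satisfied pointwise on $[0,T]\times[0,L]$.
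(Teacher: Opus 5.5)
Your proposal is correct and follows essentially the same route as the paper: you build the separated solutions from \eqref{p1} and \eqref{p2} via Theorem~\ref{orden1} and the converse part of Theorem~\ref{form_sol}, absorb $-b_0h(0)$ into $a_0$, and conclude by superposition. Your extra check that the constant\--coefficient exponentials are differentiable at every point (not merely $g$\--a.e., resp.\ $h$\--a.e.), so that the second derivative $(v_\pm)''_h$ makes sense pointwise, is a detail the paper's proof passes over silently, and it is a sound refinement.
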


\begin{proof}
	Let $\lambda\in\mathbb{F}$ and define $p(t):=\lambda c^{2}$, $t\in[0,T]$. By multiplying the solutions obtained for each of the problems \eqref{p1} and \eqref{p2}, and applying Theorem~\ref{form_sol}, we obtain that the functions
	\[u(t,x)=x_{0}\exp_{g}(0;0,t)(c_{1}+c_{2}(h(x)-h(0)))=x_{0}(c_{1}+c_{2}(h(x)-h(0))),\]
	corresponding to the case $\lambda=0$, and
	\[u(t,x)=x_{0}\exp_{g}(p;0,t)(c_{1}\exp_{h}(\sqrt{\lambda};0,x)+c_{2}\exp_{h}(-\sqrt{\lambda};0,x)),\]
	otherwise are solutions of equation \eqref{calor}.
	Collecting constants, we can write these solutions as  $u(t,x)=a_{0}+b_{0}h(x)$
	and
	\[u(t,x)=a\exp_{g}(p;0,t)\exp_{h}(\sqrt{\lambda};0,x)+b\exp_{g}(p;0,t)\exp_{h}(-\sqrt{\lambda};0,x),\]
	respectively, for constants $a_{0},b_{0},a,b\in\mathbb{R}$.
	Since \eqref{calor} is linear and homogeneous, the principle of superposition yields that any linear combination of such solutions is a again a solution of \eqref{calor}.
\end{proof}

\subsection{Series solution}\label{sol_serie}

It is natural to ask what conditions must be imposed for an infinite sum of solutions of separated variables of \eqref{calor} to remain a solution of the equation.
First, we present a result that guarantees the $g$\--differentiability of the limit of a sequence of $g$\--differentiable functions. To this end, we adapt \cite[Lemma 4]{series_time_scales}.

\begin{pro}\label{prop3.2}
	The operator $\cD: \cB\cC_g^1([a,b],\bF)\to \cB\cC_g([a,b],\bF)$ such that $\cD f:=f'_g$ is continuous. In particular, if $\{f_{n}\}_{n\in\mathbb{N}}\ss\cB\cC_g^1([a,b],\bF)$ converges to $f$ in $\cB\cC_g^1([a,b],\bF)$, then $(f_n)'_g\to f_g'$ in $\cB\cC_g([a,b],\bF)$.
\end{pro}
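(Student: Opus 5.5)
The operator $\cD$ is linear, and by the definition of the norm on $\cB\cC_g^1([a,b],\bF)$ it is in fact bounded with operator norm at most one. So the proof should be almost a direct check from the definitions, with the only real content being that $\cD$ actually lands in $\cB\cC_g([a,b],\bF)$.

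\textbf{Step 1: well-definedness.} Take $f\in\cB\cC_g^1([a,b],\bF)$. By the recursive definition of $\cB\cC_g^1$, the derivative $f'_g=f_g^{(1)}$ belongs to $\cB\cC_g([a,b],\bF)$. Hence $\cD f$ lies in the stated codomain.

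\textbf{Step 2: linearity.} By Proposition~\ref{propiedades_gderivada}(1), for $f_1,f_2\in\cB\cC_g^1$ and scalars $\lambda_1,\lambda_2$ we have $(\lambda_1 f_1+\lambda_2 f_2)'_g=\lambda_1 (f_1)'_g+\lambda_2 (f_2)'_g$ pointwise. So $\cD$ is linear.

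\textbf{Step 3: boundedness.} For every $f\in\cB\cC_g^1([a,b],\bF)$,
\[
\|\cD f\|_0=\|f_g'\|_0\le \max\{\|f\|_0,\|f'_g\|_0\}=\|f\|_1 .
\]
Thus $\cD$ is a bounded linear operator of norm at most one, and therefore continuous. This is essentially the whole argument; there is no genuine obstacle beyond confirming that the norm on $\cB\cC_g^1$ already controls the sup norm of the derivative.

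\textbf{Step 4: the sequential statement.} Suppose $f_n\to f$ in $\cB\cC_g^1([a,b],\bF)$. Applying Step 3 to the difference $f_n-f$ and using linearity gives
\[
\|(f_n)'_g-f'_g\|_0=\|\cD(f_n-f)\|_0\le\|f_n-f\|_1\to0 .
\]
So $(f_n)'_g\to f'_g$ uniformly, that is, in $\cB\cC_g([a,b],\bF)$.

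If one instead wanted the stronger version of \cite[Lemma 4]{series_time_scales}, where only $f_n\to f$ uniformly and $(f_n)'_g\to\phi$ uniformly are assumed, the route would differ. First, Theorem~\ref{TFC1} gives $f_n(t)=f_n(a)+\int_a^t (f_n)'_g\,d\mu_g$. Next, pass to the limit using Lemma~\ref{intercambio_integrales} to get $f(t)=f(a)+\int_a^t\phi\,d\mu_g$. Finally, conclude $f'_g=\phi$ $g$-a.e. by Theorem~\ref{TFC2}. That upgrade from a.e. equality to everywhere equality would be the delicate point, but it is not needed for the statement as written.
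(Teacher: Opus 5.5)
Your argument is correct and is essentially the paper's proof. Both reduce continuity to the inequality $\|(f_n)'_g-f'_g\|_0\le\|f_n-f\|_1$, which follows directly from $\|f\|_1=\max\{\|f\|_0,\|f'_g\|_0\}$; you also make explicit the well-definedness and the linearity (via Proposition~\ref{propiedades_gderivada}) that the paper uses tacitly, and your closing remark on the stronger version is not needed for the statement.
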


\begin{proof}
    We consider in $\cB\cC_g^1([a,b],\bF)$ the norm $\|\cdot \|_1$, where $\|f\|_1:=\max\{\|f\|_0,\|f'_g\|_0\}$, for every $f\in\cB\cC_g^1([a,b],\bF)$. If $\{f_{n}\}_{n\in\mathbb{N}}\ss\cB\cC_g^1([a,b],\bF)$ converges to $f$ in $\cB\cC_g^1([a,b],\bF)$, then, since $\|\cD f_n-\cD f\|_0=\|(f_n)'_g-f'_g\|_0\leq \|f_n-f\|_1$, we have that $\|\cD f_n-\cD f\|_0\xrightarrow{n \to \infty}0$, so $\cD$ is continuous.
\end{proof}

Applying Proposition~\ref{prop3.2} to function series, we obtain the following corollary.

\begin{cor}\label{cor2.4}
	If $\sum_{\n}f_{n}$  is a converging series in $\cB\cC_g^1([a,b],\bF)$ , then $\(\sum_{\n}f_{n}\)'_g=\sum_{\n}(f_{n})'_g$, where the convergence is uniform.
\end{cor}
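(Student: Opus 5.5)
The plan is to reduce the statement directly to Proposition~\ref{prop3.2}, applied to the sequence of partial sums. Set $S_N:=\sum_{n=1}^{N}f_n$. By linearity of the Stieltjes derivative (Proposition~\ref{propiedades_gderivada}), each $S_N$ belongs to $\cB\cC_g^1([a,b],\bF)$. Moreover, $(S_N)'_g=\sum_{n=1}^{N}(f_n)'_g$ holds at every point of $[a,b]$, by a finite induction on $N$. The hypothesis that $\sum_{\n}f_n$ converges in $\cB\cC_g^1([a,b],\bF)$ means exactly this: there is some $S\in\cB\cC_g^1([a,b],\bF)$ with $\|S_N-S\|_1\to0$, and $S=\sum_{\n}f_n$ by definition.

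The main step is then to invoke Proposition~\ref{prop3.2}. The operator $\cD$ is continuous from $(\cB\cC_g^1,\|\cdot\|_1)$ to $(\cB\cC_g,\|\cdot\|_0)$, so $\cD S_N\to\cD S$ in the supremum norm. That is,
\[
\sum_{n=1}^{N}(f_n)'_g=(S_N)'_g\longrightarrow S'_g\quad\text{uniformly on }[a,b].
\]
This says precisely that the series $\sum_{\n}(f_n)'_g$ converges uniformly and that its sum is $\(\sum_{\n}f_n\)'_g$, which is the claim.

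I do not expect a real obstacle. The one point to be careful about is the meaning of ``converging series in $\cB\cC_g^1$.'' It must be read as convergence of the partial sums in the norm $\|\cdot\|_1$ to an element of that space. Pointwise or uniform convergence of the $f_n$ alone would not suffice, since it would not place the limit in the domain of $\cD$.

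If one instead wants the hypothesis to be only that both $\sum f_n$ and $\sum (f_n)'_g$ converge uniformly, one extra step is needed. Here I would use Lemma~\ref{intercambio_integrales} together with Theorem~\ref{TFC1}: write $S_N(t)=S_N(a)+\int_a^t (S_N)'_g\operatorname{d}\mu_g$ and pass to the limit. This identifies the limit $S$ as $S(a)$ plus the integral of the uniform limit $G$ of $\sum (f_n)'_g$. Then $S\in\cB\cC_g^1$ with $S'_g=G$, using Theorem~\ref{TFC2} and the $g$-continuity of $G$ to obtain the derivative everywhere rather than only $g$-a.e. Under the literal hypothesis of the corollary this detour is unnecessary.
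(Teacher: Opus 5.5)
Your proposal is correct and matches the paper's argument: the paper obtains the corollary by applying the continuity of $\cD$ (Proposition~\ref{prop3.2}) to the partial sums $S_N$, and linearity of the Stieltjes derivative gives $(S_N)'_g=\sum_{n=1}^{N}(f_n)'_g$. Your closing paragraph on weaker hypotheses is not needed for the statement as given.
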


Now we provide  sufficient conditions under which a countable collection of solutions $\{u_{n}\}_{n\in\mathbb{N}}$ to \eqref{calor} can be summed to obtain another solution of the same equation.
\begin{thm}\label{dif_serie}
	Let $\{\lambda_{n}\}_{n\in\mathbb{N}}\ss\bF$, and for each $n\in\mathbb{N}$, let $u_{n}$ be the solution of \eqref{calor} defined by \[u_{n}(t,x)=w_{n}(t)v_{n}(x),\] where $w_{n}$ is a solution of \eqref{p1} and $v_{n}$ solves \eqref{p2}, with $\lambda=\lambda_{n}$. Assume the following conditions hold:
	\begin{enumerate}
		\item For each fixed $x\in [0,L]$, the series $\sum_{n=1}^{\infty}v_{n}(x)w_{n}$ converges in $\cB\cC^1_h([a,b],\bF)$.
		\item For each fixed $t\in [0,T]$, the series $\sum_{n=1}^{\infty}w_{n}(t)v_{n}$ converges in $\cB\cC^2_g([a,b],\bF)$.
	\end{enumerate}
	Then, the series $u:=\sum_{n=1}^{\infty}u_{n}(t,x)$ is
	 a solution of \eqref{calor}.
\end{thm}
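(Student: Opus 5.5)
The plan is to differentiate the series term by term in each variable separately, using Corollary~\ref{cor2.4}, and then use that each summand solves the equation. (The hypotheses as written name the spaces with the derivators swapped and use the interval $[a,b]$. I read them in the intended way: for fixed $x$, the series $\sum_n v_n(x)w_n$ converges in $\cB\cC^1_g([0,T],\bF)$. For fixed $t$, the series $\sum_n w_n(t)v_n$ converges in $\cB\cC^2_h([0,L],\bF)$.)

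\emph{Time derivative.} Fix $x\in[0,L]$. By hypothesis~1, $\sum_n v_n(x)w_n$ converges in $\cB\cC^1_g([0,T],\bF)$. Its limit is the function $t\mapsto u(t,x)$, since convergence in $\|\cdot\|_1$ implies pointwise convergence. Corollary~\ref{cor2.4} then gives
\[
\pd_g u(t,x)=\sum_{n=1}^\infty v_n(x)(w_n)'_g(t)=\sum_{n=1}^\infty \lambda_n c^2 w_n(t)v_n(x),
\]
where the last equality uses \eqref{p1} with $\lambda=\lambda_n$.

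\emph{Space derivatives.} Fix $t\in[0,T]$. Hypothesis~2 gives convergence of $\sum_n w_n(t)v_n$ in $\cB\cC^2_h([0,L],\bF)$, hence in $\cB\cC^1_h$, to $x\mapsto u(t,x)$. Corollary~\ref{cor2.4} (with $h$ in place of $g$) yields $\pd_h u(t,x)=\sum_n w_n(t)(v_n)'_h(x)$. To pass to the second derivative, I would observe that the $\|\cdot\|_2$-norm of $F$ dominates the $\|\cdot\|_1$-norm of $F'_h$. Hence the series of first derivatives $\sum_n w_n(t)(v_n)'_h$ converges in $\cB\cC^1_h$. A second application of Corollary~\ref{cor2.4} gives
\[
\pd_h^2u(t,x)=\sum_{n=1}^\infty w_n(t)(v_n)''_h(x)=\sum_{n=1}^\infty\lambda_n w_n(t)v_n(x),
\]
using \eqref{p2}.

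\emph{Conclusion.} Subtracting, $\pd_gu-c^2\pd_h^2u=\sum_n(\lambda_nc^2-c^2\lambda_n)w_nv_n=0$ at every $(t,x)$. Both series on the right converge, being the derivative series just obtained, so termwise subtraction is legitimate. Therefore $u$ solves \eqref{calor}.

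The main obstacle I expect is the bookkeeping in the second-derivative step. One must check that the limit of the derivative series is exactly the $h$-derivative of the limit, which is the content of Proposition~\ref{prop3.2}. One must also check that convergence in $\cB\cC^2_h$ transfers to convergence of the differentiated series in $\cB\cC^1_h$. The latter follows directly from the definition $\|f\|_2=\max_{0\le i\le 2}\|f^{(i)}_h\|_0$. Everything else is routine once the hypotheses are read with the correct derivators and intervals.
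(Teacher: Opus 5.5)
Your proposal is correct and follows the same route as the paper. You fix $x$ and differentiate $\sum_n v_n(x)w_n$ term by term in $g$ via Corollary~\ref{cor2.4}, then fix $t$ and differentiate $\sum_n w_n(t)v_n$ twice in $h$, and conclude using \eqref{p1} and \eqref{p2}. Your version is slightly more careful than the written proof: you read the hypotheses with the derivators and intervals in their intended places, keep $\lambda_n$ inside each sum instead of a single $\lambda$, and justify the second $h$-differentiation through $\|F'_h\|_1\le\|F\|_2$.
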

\begin{proof}
Given that  $w_{n}$ is a solution of \eqref{p1} and $v_{n}$ solves \eqref{p2}, we have that $w_{n}\in\cB\cC^\infty([0,T],\bF)$ and $v_{n}\in\cB\cC^\infty([0,L],\bF)$	for $\n$. Hence, we can consider the series	$\sum_{n=1}^{\infty}v_{n}(x)w_{n}(\cdot)$ with $x\in[0,L]$ and, using hypothesis 1, apply Corollary~\ref{cor2.4} to differentiate it under the sum sign, getting
\[\frac{\pd}{\pd_gt}\left(\sum_{n=1}^{\infty}v_{n}(x)w_{n}(t)\)=\sum_{n=1}^{\infty}v_{n}(x)(w_{n})'_g(t)=\l c^2\sum_{n=1}^{\infty}v_{n}(x)w_{n}(t),\quad t\in[0,L].\]
We can do the same to $\sum_{n=1}^{\infty}v_{n}(\cdot)w_{n}(t)$ and for $t\in[0,T]$ and differentiate twice the series, arriving to
\[\frac{\pd^2}{\pd_hx^2}\left(\sum_{n=1}^{\infty}v_{n}(x)w_{n}(t)\right)(t,x)=\sum_{n=1}^{\infty}\pd^{2}_{h} (v_{n})''_h(x)w_{n}(t)(t_{0},x)=\l\sum_{n=1}^{\infty} v_{n}(x)w_{n}(t)(t_{0},x),\quad x\in[0,L].\]
It is now clear that $u$ is a solution of \eqref{calor}.
	\end{proof}
\begin{rem}We can make the conditions in the previous result more explicit by substituting them by
	\begin{enumerate}
					\item For each fixed $x_{0}\in (0,L)$, the series $\sum_{n=1}^{\infty}v_{n}(x_{0})w_{n}(t)$ converges pointwise for all $t\in [0,T]$.
				\item For every $x_{0}\in (0,L)$, the series $\sum_{n=1}^{\infty}v_{n}(x_{0})\lambda_{n}w_{n}(t)$ converges uniformly on $[0,T]$.
				\item For each fixed $t_{0}\in (0,T)$, the series $\sum_{n=1}^{\infty}v_{n}(x)w_{n}(t_{0})$ and $\sum_{n=1}^{\infty}(v_{n})'_{h}(x)w_{n}(t_{0})$ converge pointwise for all $x\in [0,L]$.
				\item For each $t_{0}\in (0,T)$, the series $\sum_{n=1}^{\infty}(v_{n})'_{h}(x)\lambda_{n}w_{n}(t_{0})$ and $\sum_{n=1}^{\infty}\lambda_{n}v_{n}(x)\lambda_{n}w_{n}(t_{0})$ converge uniformly on $[0,L]$.
		\end{enumerate}
	\end{rem}

\section{The initial value problem}\label{IVP}

Now our aim is to analyze the following problem:
\begin{equation}\label{pvi_calor}
	\begin{dcases}
		\pd_{g}u(t,x)-c^{2}\pd_{h}^{2}u(t,x)=0, \\
		u(0,x)=u_{0}(x),
	\end{dcases}
\end{equation}
where $c>0$ is a real constant, and $u_{0}$ is a given complex\--valued function defined on $[0,L]$. We assume that the variables $(t,x)$ take values in $[0,T]\times[0,L]$, for some $T,L\in\mathbb{R}^{+}$.
In Section~\ref{solgeneral}, for each $\lambda\in\mathbb{F}$, we constructed a function $u(t,x)\equiv u_{\lambda}(t,x)=w_{\lambda}(t)v_{\lambda}(x)$ that solves \eqref{calor}, where $w_{\lambda}$ solves \eqref{p1} and $v_{\lambda}$ solves \eqref{p2}. \par
Assume that $u_{0}:[0,L]\rightarrow\mathbb{C}$ is a function of the form
\begin{equation}\label{u0}
	u_{0}(x)=a_{0}+b_{0}h(x)+\sum\limits_{n=1}^{m}a_{n}\exp_{h}(\sqrt{\lambda_{n}};0,x)+\sum\limits_{n=1}^{m}b_{n}\exp_{h}(-\sqrt{\lambda_{n}};0,x),
\end{equation}
where $m\in\mathbb{N}$, $\{a_{n}\}_{n=0}^{m}$ and $\{b_{n}\}_{n=0}^{m}$ are finite collections of complex numbers, and $\{\lambda_{n}\}_{n=1}^{m}$ is a finite collection of nonzero complex numbers.\par Throughout the rest of this subsection, we will use the following notation:
\[u_{0}(x)=a_{0}v_{0}(x)+b_{0}\widetilde{v}_{0}(x)+\sum\limits_{n=1}^{m}a_{n}v_{n}(x)+\sum\limits_{n=1}^{m}b_{n}\widetilde{v}_{n}(x),\]
where
\begin{itemize}
	\item $v_{0}$, defined by $v_{0}(x)=1$, is the unique $h$\--absolutely continuous solution of
	\begin{equation*}\label{v0}
		\begin{dcases}
			v''_{h}(x)=0, & x\in[0,L], \\
			v(0)=1, & \\
			v'_{h}(0)=0;
		\end{dcases}
	\end{equation*}
	\item $\widetilde{v}_{0}$, defined by $\widetilde{v}_{0}(x)=h(x)$, is the unique $h$\--absolutely continuous solution of
	\begin{equation*}\label{v0tilde}
		\begin{dcases}
			v''_{h}(x)=0, & x\in[0,L], \\
			v(0)=h(0), & \\
			v'_{h}(0)=1;
		\end{dcases}
	\end{equation*}
	\item For each $n\in\{1,2,\ldots,m\}$, $v_{n}$, defined by $v_{n}(x)=\exp_{h}(\sqrt{\lambda_{n}};0,x)$, is the unique $h$\--absolutely continuous solution of \begin{equation*}\label{vn}
		\begin{dcases}
			v''_{h}(x)=\lambda_{n}v(x), & x\in[0,L], \\
			v(0)=1, & \\
			v'_{h}(0)=\sqrt{\lambda_{n}};
		\end{dcases}
	\end{equation*}
	\item For each $n\in\{1,2,\ldots,m\}$, $\widetilde{v}_{n}$, defined by $\widetilde{v}_{n}(x)=\exp_{h}(-\sqrt{\lambda_{n}};0,x)$, is the unique $h$\--absolutely continuous solution of
	\begin{equation*}\label{vntilde}
		\begin{dcases}
			v''_{h}(x)=\lambda_{n}v(x), & x\in[0,L], \\
			v(0)=1, & \\
			v'_{h}(0)=-\sqrt{\lambda_{n}}.
		\end{dcases}
	\end{equation*}
\end{itemize}

We will also write:
\begin{itemize}
	\item $w_{0}$ is the unique $g$\--absolutely continuous function of problem
	\begin{equation*}\label{w0}
		\begin{dcases}
			w'_{g}(t)=0, & t\in[0,T], \\
			w(0)=a_{0}; &
		\end{dcases}
	\end{equation*}
	\item $\widetilde{w}_{0}$ is the unique $g$\--absolutely continuous solution of the problem
	\begin{equation*}\label{w0tilde}
		\begin{dcases}
			w'_{g}(t)=0, & t\in[0,T], \\
			w(0)=b_{0}; &
		\end{dcases}
	\end{equation*}
	\item For each $n\in\mathbb{N}$, $w_{n}\in\mathcal{AC}_{g}([0,T],\mathbb{R})$ solves
	\begin{equation*}\label{wn}
		\begin{dcases}
			w'_{g}(t)=\lambda_{n} c^{2}w(t), & t\in[0,T], \\
			w(0)=a_{n}; &
		\end{dcases}
	\end{equation*}
	\item For each $n\in\mathbb{N}$, $\widetilde{w}_{n}$ is the only $g$\--absolutely continuous solution of
	\begin{equation*}\label{wntilde}
		\begin{dcases}
			w'_{g}(t)=\lambda_{n} c^{2}w(t), & t\in[0,T], \\
			w(0)=b_{n}. &
		\end{dcases}
	\end{equation*}
\end{itemize}

The following result establishes the existence of a solution of \eqref{pvi_calor} when the initial condition $u_{0}$ is of the form \eqref{u0}.

\begin{thm}\label{thm_sol_pvi}
	Let $u_{0}:[0,L]\rightarrow\mathbb{C}$ be given by \eqref{u0}. Then, the problem \eqref{pvi_calor} admits a solution given by
	\begin{equation*}\label{sol_pvi}
		u(t,x)=w_{0}(t)v_{0}(x)+\widetilde{w}_{0}(t)\widetilde{v}_{0}(x)+\sum\limits_{n=1}^{m}w_{n}(t)v_{n}(x)+\sum\limits_{n=1}^{m}\widetilde{w}_{n}(t)\widetilde{v}_{n}(x).
	\end{equation*}
\end{thm}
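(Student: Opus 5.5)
The plan is to check the two requirements of \eqref{pvi_calor} separately: the equation, using the superposition structure already established, and then the initial condition, by evaluating at $t=0$.

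\textbf{The equation.} First I will identify each product in the proposed $u$ with a solution of separated variables covered by Theorem~\ref{form_sol} or Theorem~\ref{gen_sol}.

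For $n\ge1$, $w_n$ solves \eqref{p1} with $\lambda=\lambda_n$, so by Theorem~\ref{orden1} $w_n(t)=a_n\exp_g(p_n;0,t)$ with $p_n=\lambda_nc^2$. Likewise $\widetilde w_n(t)=b_n\exp_g(p_n;0,t)$. The functions $v_n=\exp_h(\sqrt{\lambda_n};0,\cdot)$ and $\widetilde v_n=\exp_h(-\sqrt{\lambda_n};0,\cdot)$ solve \eqref{p2} with $\lambda=\lambda_n$. Hence each product $w_nv_n$ and $\widetilde w_n\widetilde v_n$ solves \eqref{calor}, by the converse direction of Theorem~\ref{form_sol}. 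If some $a_n$ or $b_n$ vanishes, the corresponding term is identically zero, which is trivially a solution.

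For the terms with $n=0$, $w_0\equiv a_0$ and $\widetilde w_0\equiv b_0$, since the $g$\--derivative vanishes and we use the uniqueness in Theorem~\ref{orden1} with $h\equiv0$. Then $w_0v_0+\widetilde w_0\widetilde v_0=a_0+b_0h(x)$, which solves \eqref{calor} directly. Indeed, $\partial_g$ of a function independent of $t$ is zero, and $h'_h\equiv1$ gives $h''_h=0$.

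Summing, $u$ is exactly of the form in Theorem~\ref{gen_sol}, with the same constants $a_n,b_n$. Linearity of the $g$\-- and $h$\--derivatives (Proposition~\ref{propiedades_gderivada}) then shows that $u$ solves \eqref{calor}.

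\textbf{The initial condition.} Next I evaluate at $t=0$. Since $\exp_g(p;0,0)=1$ (an empty product times $e^0$), we get $w_n(0)=a_n$, $\widetilde w_n(0)=b_n$, $w_0(0)=a_0$ and $\widetilde w_0(0)=b_0$. Therefore
\[u(0,x)=a_0+b_0h(x)+\sum_{n=1}^m a_n\exp_h(\sqrt{\lambda_n};0,x)+\sum_{n=1}^m b_n\exp_h(-\sqrt{\lambda_n};0,x)=u_0(x),\]
which is \eqref{u0}.

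\textbf{Expected obstacle.} There is no substantive difficulty here. The only point needing care is the bookkeeping for $\widetilde v_0=h$, which is not normalised at $0$. The term $b_0h(x)$ in \eqref{u0} matches $\widetilde w_0\widetilde v_0$ exactly, so no constant shift by $h(0)$ is needed. One should also note that $h$ is $h$\--differentiable with $h'_h=1$ everywhere on $[0,L]$, including points of $C_h$ and $D_h$, directly from Definition~\ref{def_gderiv}.
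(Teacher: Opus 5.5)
Your proposal is correct and follows the paper's proof: the paper also gets that $u$ solves \eqref{calor} directly from Theorem~\ref{gen_sol}, and then checks $u(0,x)=u_0(x)$ using $w_0(0)=a_0$, $\widetilde{w}_0(0)=b_0$, $w_n(0)=a_n$ and $\widetilde{w}_n(0)=b_n$. Your extra bookkeeping only makes explicit what the paper leaves implicit: identifying $w_n=a_n\exp_g(p_n;0,\cdot)$, handling vanishing coefficients, and noting that $\exp_g(p;0,0)=1$.
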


\begin{proof}
	From Theorem~\ref{gen_sol}, it follows that the function $u(t,x)$ solves the partial differential equation \eqref{calor}.\par Evaluating $u$ at $(0,x)$ for $x\in[0,L]$, we obtain
	\begin{equation*}
		u(0,x)=a_{0}v_{0}(x)+b_{0}\widetilde{v}_{0}(x)+\sum\limits_{n=1}^{m}a_{n}v_{n}(x)+\sum\limits_{n=1}^{m}b_{n}\widetilde{v}_{n}(x),
	\end{equation*}
	where we have used the initial values:
$w(0)=a_{0}$, $\widetilde{w}(0)=b_{0}
$, $w_{n}(0)=a_{n}$, $\widetilde{w}_{n}(0)=b_{n}$ for $n\in\{1,2,\ldots,m\}$.
	Then, $u(0,x)=u_{0}(x)$, for all $x\in [0,L]$.
\end{proof}

The next step is to investigate whether problem \eqref{pvi_calor} admits a solution for more general initial conditions. We look for a space such that, if the function $u_{0}$ belongs to the closure of this space, we can guarantee the existence of solution of the problem \eqref{pvi_calor}.

We consider $u_{0}$ given by the series:
\begin{equation*}
	u_{0}(x)=a_{0}+b_{0}h(x)+\sum\limits_{n=1}^{\infty}a_{n}\exp_{h}(\sqrt{\lambda_{n}};0,x)+\sum\limits_{n=1}^{\infty}b_{n}\exp_{h}(-\sqrt{\lambda_{n}};0,x),
\end{equation*}
where $\{a_{n}\}_{n=0}^{\infty}$ and $\{b_{n}\}_{n=0}^{\infty}$ are sequences of complex numbers, and $\{\lambda_{n}\}_{n=1}^{\infty}$ is a sequence of nonzero complex numbers. Equivalently, we can write:
\begin{equation}\label{u0_inf}
	u_{0}(x)=a_{0}v_{0}(x)+b_{0}\widetilde{v}_{0}(x)+\sum\limits_{n=1}^{\infty}a_{n}v_{n}(x)+\sum\limits_{n=1}^{\infty}b_{n}\widetilde{v}_{n}(x).
\end{equation}
\begin{thm}
	Assume the series \eqref{u0_inf} converges pointwise, and consider the problem \eqref{pvi_calor} with $u_{0}$ given by \eqref{u0_inf}. Define the sequences of functions $\{u_{n}\}_{n=0}^{\infty}$ and $\{\widetilde{u}_{n}\}_{n=0}^{\infty}$ by \[u_{n}(t,x):=w_{n}(t)v_{n}(x), \quad \widetilde{u}_{n}(t,x):=\widetilde{w}_{n}(t)\widetilde{v}_{n}(x), \text{ for all } (t,x)\in[0,T]\times[0,L].\] Assume these sequences satisfy the conditions of Theorem~\ref{dif_serie}. Then, the function
	\[u(t,x)=\sum\limits_{n=0}^{\infty}u_{n}(t,x)+\sum\limits_{n=0}^{\infty}\widetilde{u}_{n}(t,x),\]
	is well\--defined and is a solution of problem \eqref{pvi_calor}, with $u_{0}$ given by \eqref{u0_inf}.
\end{thm}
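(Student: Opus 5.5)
The plan is to combine Theorem~\ref{dif_serie} with the evaluation at $t=0$, following the scheme of Theorem~\ref{thm_sol_pvi}.

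\emph{Well-definedness and the PDE.} The hypotheses of Theorem~\ref{dif_serie} hold for $\{u_n\}_{n\ge0}$ and for $\{\widetilde u_n\}_{n\ge0}$ separately. Hypothesis 1 of that theorem requires, for each fixed $x$, that $\sum_n v_n(x)w_n$ converges in $\cB\cC^1$. In particular it converges uniformly in $t$, so each of the two series defining $u$ converges at every $(t,x)$ and $u$ is well defined.

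Each summand is a solution of \eqref{calor}:
\begin{itemize}
\item for $n\ge1$, $w_n$ solves \eqref{p1} and $v_n$ solves \eqref{p2} with $\lambda=\lambda_n$, and the same holds for $\widetilde w_n,\widetilde v_n$;
\item for $n=0$, the same holds with $\lambda=0$, since $w_0,\widetilde w_0$ are constants and $v_0=1$, $\widetilde v_0=h$ have vanishing second $h$\--derivative.
\end{itemize}
This is Theorem~\ref{form_sol} (and Theorem~\ref{gen_sol}) for each term. Theorem~\ref{dif_serie} then shows that $U:=\sum_{n\ge0}u_n$ and $\widetilde U:=\sum_{n\ge0}\widetilde u_n$ are solutions of \eqref{calor}. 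The point to check is that differentiating under the sum is justified: by Corollary~\ref{cor2.4}, $\partial_g$ of the $t$\--series is the series of $\partial_g$'s, and applying the corollary twice gives the same for $\partial_h^2$ in $x$. Because the operator in \eqref{calor} is linear and both the $g$\--derivative and the second $h$\--derivative of a sum of two convergent series split by Proposition~\ref{propiedades_gderivada}, $u=U+\widetilde U$ also solves \eqref{calor}.

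\emph{Initial condition.} Evaluating at $t=0$ and using $w_n(0)=a_n$ and $\widetilde w_n(0)=b_n$ (including $n=0$), we get
\[
u(0,x)=\sum_{n\ge0}a_nv_n(x)+\sum_{n\ge0}b_n\widetilde v_n(x).
\]
Each of these two series converges, being the value at $t=0$ of a convergent series. Their sum is therefore exactly the series \eqref{u0_inf}, whose pointwise convergence is assumed, and so $u(0,x)=u_0(x)$ for all $x\in[0,L]$.

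\emph{Main obstacle.} The only delicate point is the splitting $u=U+\widetilde U$. The statement's hypothesis gives convergence of \eqref{u0_inf} only as a combined series, whereas the argument needs each of $\sum u_n$ and $\sum\widetilde u_n$ to converge on its own. This separate convergence is exactly what the assumption that both sequences satisfy Theorem~\ref{dif_serie} provides, so I would invoke it explicitly rather than rely on the pointwise convergence of \eqref{u0_inf}.
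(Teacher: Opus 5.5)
Your proposal is correct and is essentially the paper's argument: Theorem~\ref{dif_serie} gives that $u$ solves \eqref{calor}, and evaluating at $t=0$ with $w_n(0)=a_n$, $\widetilde w_n(0)=b_n$ recovers $u_0$ exactly as in Theorem~\ref{thm_sol_pvi}. Your extra bookkeeping (pointwise convergence from the $\cB\cC^1$ hypothesis, the $n=0$ terms with $\lambda=0$, and the splitting $u=U+\widetilde U$) only makes explicit what the paper leaves implicit, and the ``obstacle'' you flag is harmless, since \eqref{u0_inf} is already written as two separate series.
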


\begin{proof}
	By Theorem~\ref{dif_serie}, the function $u$ defined in the statement is a solution of \eqref{calor}.\par
	Moreover, writing $u$ as:
	\begin{equation*}\label{serie_formal}
		u(t,x)=w_{0}(t)v_{0}(x)+\widetilde{w}_{0}(t)\widetilde{v}_{0}(x)+\sum\limits_{n=1}^{\infty}w_{n}(t)v_{n}(x)+\sum\limits_{n=1}^{\infty}\widetilde{w}_{n}(t)\widetilde{v}_{n}(x),
	\end{equation*}
	and setting
 $w(0)=a_{0}$, $\widetilde{w}(0)=b_{0}$, $w_{n}(0)=a_{n}$, $ \widetilde{w}_{n}(0)=b_{n}$, $n\in\mathbb{N}$,
	we conclude, as in Theorem~\ref{thm_sol_pvi}, that $u(0,x)=u_{0}(x)$ for all $x\in[0,L]$.
\end{proof}

\begin{exa}
	Let us consider the derivators $g,h:\mathbb{R}\rightarrow \mathbb{R}$ defined by
		\[g(t)=
	\begin{dcases}
		t, & t\leq \frac{1}{2}, \\
		t+1, & t>\frac{1}{2},
	\end{dcases} \qquad h(x)=
	\begin{dcases}
	x+2, & x\leq1, \\
	3, & 1<x\leq\frac{3}{2}, \\
	2x+1, & x>\frac{3}{2}.
	\end{dcases}\]
and represented in Figure~\ref{fig1}. We also consider the initial value problem
given by
\begin{equation}\label{example}
	\begin{dcases}
		\pd_{g}u(t,x)-\frac{1}{4}\pd_{h}^{2}u(t,x)=0, & (t,x)\in[0,1]\times[0,2],\\
		u(0,x)=1-h(x)+2\exp_{h}\(\sqrt{\frac{3}{5}};0,x\), & x\in[0,2].
	\end{dcases}
\end{equation}
Then, the function
\[u(t,x)=1-h(x)+2\exp_{g}\(\frac{3}{20};0,t\)\exp_{h}\(\sqrt{\frac{3}{5}};0,x\), \quad (t,x)\in[0,1]\times[0,2],\]
is a solution of problem \eqref{example}---see Fig.~\ref{fig2}.
\begin{figure}[h]
	\begin{minipage}[t]{0.48\textwidth}
		\centering
		\includegraphics[width=\linewidth]{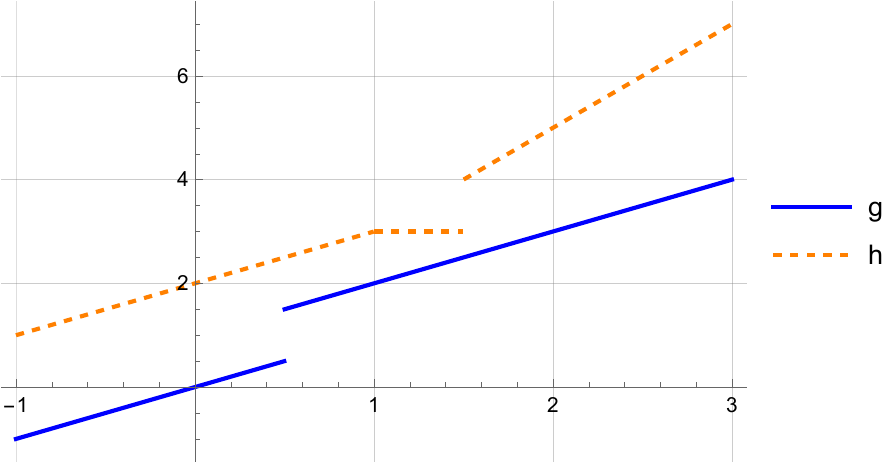}
		\caption{Derivators $g$ and $h$.}\label{fig1}
	\end{minipage}\hfill
	\begin{minipage}[t]{0.48\textwidth}
		\centering
		\includegraphics[width=\linewidth]{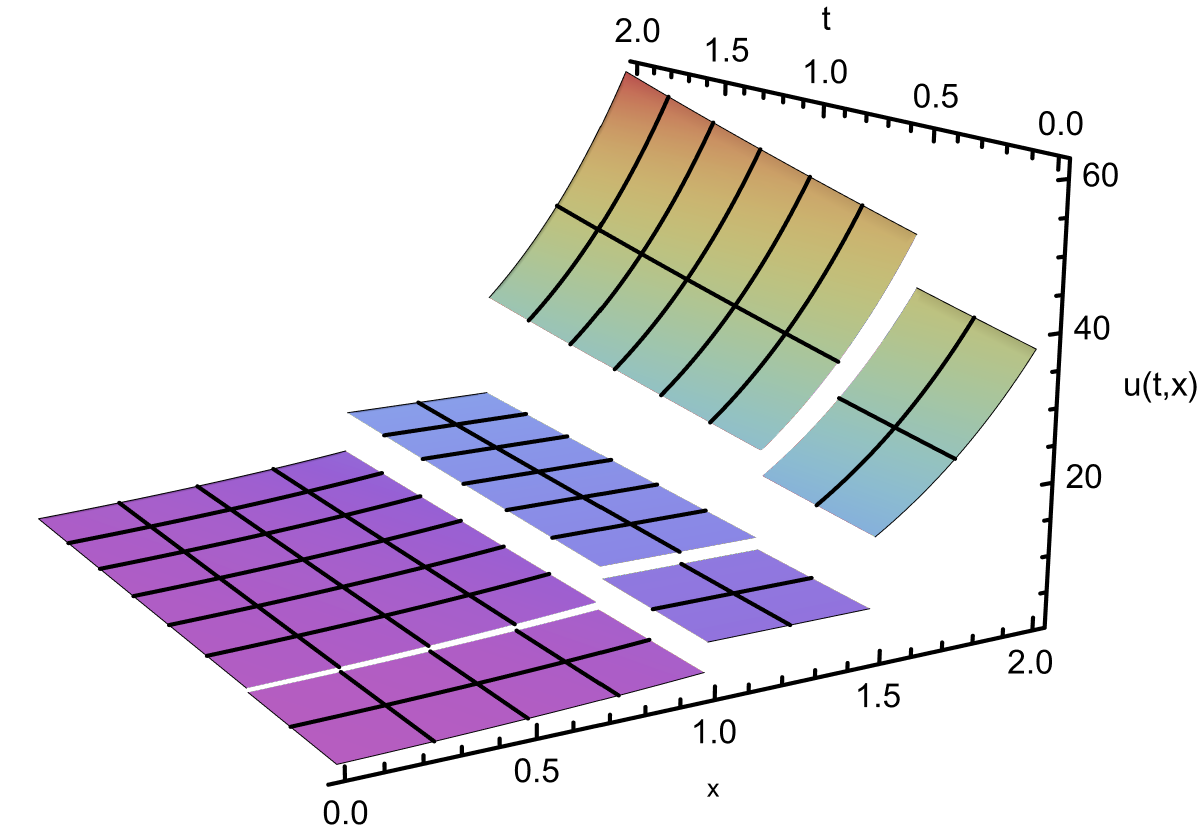}
		\caption{Function $u$, solution of \eqref{example}.}\label{fig2}
	\end{minipage}
\end{figure}
\end{exa}

\section{The heat equation with boundary conditions}\label{Boundary_cond}

We now consider the equation \eqref{calor} with different boundary conditions and seek solutions of these problems.
\subsection{Periodic conditions}
We study the following problem with periodic boundary conditions:
\begin{equation}\label{periodic}
	\begin{dcases}
		\pd_gu(t,x)-c^2\pd_h^2u(t,x)=0,& (t,x)\in [0,T]\times[0,L],\\
		u(t,0)=u(t,L), & t\in [0,T],\\
		\pd_hu(t,0)=\pd_hu(t,L), & t\in [0,T],
\end{dcases} \end{equation}
where $L,T\in\mathbb{R}^{+}$. It will be of interest to consider a particular case of the following second\--order linear problem:
\begin{equation*}
	\begin{dcases}
		v_{g}''(x)+Pv_{g}'(x)+Qv(x)=0, & x\in [0,L] \\
		v(0)=v(L), & \\
		v_{g}'(0)=v'_{g}(L),
	\end{dcases}
\end{equation*}
where $P,Q$ are real numbers. This problem is studied in \cite{periodic}, where the following result is proved.
\begin{thm}[{\cite[Theorem~3.1]{periodic}}]
	Let $g:\mathbb{R}\rightarrow\mathbb{R}$ be a left\--continuous, nondecreasing function, let $f:[0,T]\rightarrow\mathbb{R}$ be $g$\--integrable, and let $P,Q\in \mathbb{R}$ such that equation
	\[\lambda^{2}+P\lambda+Q=0\]
	has two real roots $\lambda_{1},\lambda_{2}$ that satisfy condition
	\[1+\lambda_{k}\Delta g(t)\neq 0, \quad \text{for all } t\in [0,T], \quad k\in \{1,2\}.\]
	Then, the problem \begin{equation}\label{p22periodic}
		\begin{dcases}
			v_{g}''(t)+Pv_{g}'(t)+Qv(x)=f(t), & g-a. e. \ t\in [0,T] \\
			v(0)=v(L), & \\
			v_{g}'(0)=v'_{g}(L),
		\end{dcases}
	\end{equation}
	admits a unique solution in $\mathcal{BC}_{g}^{1}([0,T],\mathbb{R})$, which has an absolutely continuous derivative.
\end{thm}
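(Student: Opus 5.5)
The plan is to follow the classical variation-of-constants approach for periodic problems. First I will build a fundamental system of the homogeneous equation from the two real roots. When $\lambda_1\neq\lambda_2$, set $\phi_k(t)=\exp_g(\lambda_k;0,t)$ for $k=1,2$. By Theorem~\ref{orden1} and the factorization $(D-\lambda_1)(D-\lambda_2)$, where $D$ denotes the $g$\--derivative, every solution of the homogeneous equation is $c_1\phi_1+c_2\phi_2$; this is the characterization from \cite[Theorem~5.3]{Fernandez2022} quoted earlier. The regressivity hypothesis $1+\lambda_k\Delta g\neq 0$ guarantees that $\phi_k$ never vanishes. When $\lambda_1=\lambda_2$, I will use instead the second solution $\phi_2=\phi_1\int_{[0,t)}(1+\lambda\Delta g)^{-1}\operatorname{d}\mu_g$.

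Next I will produce one particular solution $v_p$ of the nonhomogeneous equation by solving two first-order linear problems in succession. Let $z$ be the solution of $z'_g-\lambda_2 z=f$, and then let $v_p$ solve $(v_p)'_g-\lambda_1 v_p=z$, each with zero initial data. Each of these exists and is $g$\--absolutely continuous by the nonhomogeneous version of Theorem~\ref{orden1}. This version uses the variation-of-constants formula involving $\exp_g(\lambda_k;0,\cdot)$ and $(1+\lambda_k\Delta g)^{-1}$, and regressivity is again what makes it available. Since $z$ is absolutely continuous, $(v_p)'_g=\lambda_1v_p+z$ has an absolutely continuous derivative. This also shows that $v_p\in\mathcal{BC}^1_g$.

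Every solution is then $v=v_p+c_1\phi_1+c_2\phi_2$, and the periodic conditions become a $2\times2$ linear system in $(c_1,c_2)$ with matrix
\[M=\begin{pmatrix}\phi_1(L)-1&\phi_2(L)-\phi_2(0)\\ (\phi_1)'_g(L)-(\phi_1)'_g(0)&(\phi_2)'_g(L)-(\phi_2)'_g(0)\end{pmatrix}.\]
Existence and uniqueness both reduce to showing $\det M\neq0$. Equivalently, the homogeneous periodic problem must have only the trivial solution.

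I expect this nondegeneracy to be the main obstacle. In the distinct-roots case, $(\phi_k)'_g=\lambda_k\phi_k$, so $\det M=(\lambda_2-\lambda_1)(E_1-1)(E_2-1)$ with $E_k=\phi_k(L)$. I therefore need $E_k\neq1$. This is where the hypothesis that the roots are real enters. $E_k$ is a product of factors $1+\lambda_k\Delta g(s)$ times $\exp(\lambda_k\mu_g^c)$, where $\mu_g^c$ is the continuous part of the measure. Each factor is strictly greater than $1$ if $\lambda_k>0$ and strictly between $0$ and $1$ if $\lambda_k<0$, provided it stays positive. Negative factors from non-strong regressivity are the delicate case, and I would have to handle them or use the hypotheses more carefully. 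The case $\lambda_k=0$ must also be treated separately, as must the repeated-root case, where $\det M$ has to be computed directly. I would also need $\mu_g([0,L))>0$, so that the factors are nontrivial. Once $\det M\neq0$, I solve uniquely for $(c_1,c_2)$ and conclude.
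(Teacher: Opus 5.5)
Most of your reduction is correct. The particular solution built from two successive first-order problems is right, and so is the representation $v=v_p+c_1\phi_1+c_2\phi_2$. The determinant $\det M=(\lambda_2-\lambda_1)(E_1-1)(E_2-1)$ is also correct. In the double-root case, $(\phi_2)'_g=\lambda\phi_2+\phi_1$ gives $\det M=(E-1)^2$. So everything hinges on $E_k=\exp_g(\lambda_k;0,L)\neq 1$.

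That is exactly the step you leave open, and it cannot be closed, because the hypotheses as stated do not imply it.

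\begin{itemize}
\item A root $\lambda_k=0$ is allowed, since $1+0\cdot\Delta g\neq 0$ holds trivially, and it gives $E_k=1$. Take $g=\operatorname{Id}$, $P=1$, $Q=0$. Every constant solves the homogeneous periodic problem. For $f\equiv 1$, integrating $v''+v'=1$ over $[0,L]$ gives $0=L$. So both uniqueness and existence fail.
\item Regressivity alone does not exclude $E_k=1$ for negative roots either. Take $\lambda_1=-1$, $\lambda_2=-2$ (so $P=3$, $Q=2$). Let $g$ have continuous part of mass $\ln 4$ on $[0,L)$ plus two jumps of size $3$ inside $(0,L)$. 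Then $1+\lambda_k\Delta g\in\{-2,-5\}$ is never zero, but $E_1=(1-3)^2e^{-\ln 4}=1$. Hence $\exp_g(-1;0,\cdot)$ is a nontrivial solution of the homogeneous periodic problem.
\item The degenerate case $\mu_g([0,L))=0$ also gives $E_k=1$.
\end{itemize}

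What your argument actually proves is the result under the extra hypothesis $\exp_g(\lambda_k;0,L)\neq 1$ for $k=1,2$. By your own monotonicity remark, this holds when $\lambda_k\neq 0$, $1+\lambda_k\Delta g>0$ on $[0,L)\cap D_g$, and $\mu_g([0,L))>0$. Your determinant formula further shows the condition is also necessary.

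The paper gives no proof of this statement; it quotes it from \cite{periodic}. When it later relies on it, it follows that proof by factoring the operator into two first-order \emph{periodic} problems. First it solves $z'_g-\lambda_2 z=f$, $z(0)=z(L)$. Then it solves $v'_g-\lambda_1 v=z$, $v(0)=v(L)$. The condition $v'_g(0)=v'_g(L)$ then follows automatically from $v'_g=\lambda_1 v+z$. Compared with your route, this avoids the fundamental system and the $2\times 2$ determinant, and it treats equal and distinct roots uniformly. However, each first-order periodic problem is uniquely solvable exactly when $\exp_g(\lambda_k;0,L)\neq 1$. So that route meets the same nondegeneracy condition and needs the same additional hypothesis.
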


\begin{rem}
	Although the above Theorem assumes the roots of the characteristic equation $\lambda_{1}$ and $\lambda_{2}$ are real, we will see ---by analyzing the proof of this result in our specific case--- that the same conclusion holds when $\lambda_{1},\lambda_{2}\in\mathbb{C}$.
\end{rem}

As before, we use the method of separation of variables, looking for a solution $u$ of \eqref{periodic} of the form $u(t,x)=w(t)v(x)$.
\begin{thm}
	Assume there exists $\l\in\bF$ such that $\exp_{g}(-\sqrt{\l};0,L)=1$. Then,  $u(t,x)=w(t)v(x)$, where $w$ is given by $w(t)=\exp_{g}(\l c^2;0,t)$, $t\in[0,T]$,  and $v$ solves
    \begin{equation}\label{p2periodic}
		\begin{dcases}
			v_{h}''(x)-\lambda v(x)=0, & h\text{\--a.e. } x\in [0,L] \\
			v(0)=v(L), & \\
			v_{h}'(0)=v'_{h}(L),
		\end{dcases}
	\end{equation}
	is a nontrivial solution in $\cC^2_g([0,T]\times[0,L],\bC)$  of problem \eqref{periodic}.
\end{thm}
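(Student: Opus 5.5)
The plan is to reduce the statement to Theorem~\ref{form_sol} and then exhibit a nontrivial solution of \eqref{p2periodic} explicitly. Theorem~\ref{form_sol} shows that $u=wv$ solves \eqref{calor} as soon as $w$ solves \eqref{p1} and $v$ solves \eqref{p2} with the same $\lambda$, both nontrivially. By Theorem~\ref{orden1}, $w(t)=\exp_g(\lambda c^2;0,t)$ is the solution of \eqref{p1} with $w(0)=1$, so it is nontrivial. The boundary conditions in \eqref{periodic} factor as
\[
w(t)\bigl(v(0)-v(L)\bigr)=0,\qquad w(t)\bigl(v'_h(0)-v'_h(L)\bigr)=0 .
\]
These hold for every $t$ once $v$ satisfies the periodic conditions in \eqref{p2periodic}. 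The whole problem therefore reduces to showing that \eqref{p2periodic} has a nontrivial solution under the hypothesis $\exp(-\sqrt\lambda;0,L)=1$. I read that exponential as taken with respect to $h$, since $L$ is a space endpoint.

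For the key step, I would take $v(x)=\exp_h(-\sqrt{\lambda};0,x)$, or $\exp_h(\sqrt{\lambda};0,x)$ in the symmetric case. By the discussion following Theorem~\ref{form_sol}, $v$ solves $v''_h=\lambda v$ on $[0,L]$. Indeed, $v'_h=-\sqrt\lambda\, v$, and therefore $v''_h=(-\sqrt\lambda)^2 v=\lambda v$ wherever the derivatives exist. Since $v(0)=1$, the first boundary condition is exactly the hypothesis $v(L)=1=v(0)$. For the second, I would use $v'_h(x)=-\sqrt{\lambda}\,v(x)$, which holds at every point and not merely $h$-a.e. This follows because $\exp_h$ with a constant coefficient is $h$-differentiable everywhere in $[0,L]$, as read off from Definition~\ref{def_gexp}. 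At $x=L$ this is valid because $L\notin D_h\cup C_h$, which is assumed throughout. Hence $v'_h(L)=-\sqrt\lambda\, v(L)=-\sqrt\lambda\, v(0)=v'_h(0)$. Finally, $v$ is nontrivial since $v(0)=1$.

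It then remains to check regularity, namely that $u$ lies in the stated class. This means $\partial_g u$ exists and is $g$-continuous in $t$, and $\partial_h u$ and $\partial_h^2u$ exist and are $h$-continuous in $x$. Each factor is an exponential with a constant coefficient, so its derivatives are constant multiples of itself, and each factor is $g$- (respectively $h$-) absolutely continuous, hence continuous in the corresponding sense. Products with the other variable held fixed preserve these properties.

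The main obstacle is the pointwise validity of $v'_h(x)=-\sqrt\lambda\, v(x)$ at the endpoints $0$ and $L$, since Theorem~\ref{orden1} only gives the equation $h$-a.e. My first approach would be a direct computation from Definition~\ref{def_gexp}. At points of $D_h$, I would use the jump formula $v(x^+)=(1-\sqrt\lambda\,\Delta h(x))v(x)$. At other points, I would use the quotient limit, which follows from the continuity of $\mu_h$ together with $L\notin D_h\cup C_h$. A secondary point is to make sure $\lambda\neq0$ is not required: if $\lambda=0$, the constant function $v\equiv1$ already works.
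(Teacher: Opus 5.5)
Your proof is correct, but it takes a more direct route than the paper.

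\textbf{The paper's route.} It follows the proof of \cite[Theorem~3.1]{periodic} and splits the problem into two first-order periodic problems. It takes $u_1=\exp_h(-\sqrt{\lambda};0,\cdot)$ as the solution of \eqref{pp_1}. It then defines $v$ as ``the unique'' $h$-absolutely continuous solution of $v'_h-\sqrt{\lambda}\,v=u_1$, $v(0)=v(L)$, which is problem \eqref{pp_2}. Finally it checks that $v''_h=\lambda v$ and $v'_h(0)=\sqrt{\lambda}\,v(0)+u_1(0)=\sqrt{\lambda}\,v(L)+u_1(L)=v'_h(L)$.

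\textbf{Your route.} You observe that $u_1$ itself already solves \eqref{p2periodic}. Since $v'_h=-\sqrt{\lambda}\,v$ holds pointwise, the condition $v(0)=v(L)$ carries over to the derivatives.

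\textbf{What each buys.} Your approach gains more than brevity, because the paper's route needs the nonhomogeneous periodic problem \eqref{pp_2} to be solvable, and the hypothesis does not guarantee this.
\begin{itemize}
\item For $\lambda=0$, where $\exp_h(0;0,L)=1$ holds trivially, \eqref{pp_2} reads $v'_h=1$, $v(0)=v(L)$. Theorem~\ref{TFC1} then gives $v(L)-v(0)=h(L)-h(0)$, so there is no solution unless $h(L)=h(0)$.
\item Whenever also $\exp_h(\sqrt{\lambda};0,L)=1$ (e.g.\ $h=\operatorname{Id}$, $\sqrt{\lambda}=2\pi i k/L$), the homogeneous periodic problem has the nontrivial solution $\exp_h(\sqrt{\lambda};0,\cdot)$. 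Hence \eqref{pp_2} cannot have the unique solution the paper claims.
\end{itemize}
Your argument has no such dependence. It also makes explicit the pointwise evaluation of $v'_h$ at $0$ and $L$, which the paper uses silently even though Theorem~\ref{orden1} only gives the equation $h$-a.e. When the paper's construction does work, it mirrors the general machinery of \cite{periodic} and can reach solutions outside the span of $u_1$; in the classical case these are combinations of $e^{\pm\sqrt{\lambda}x}$. The theorem, however, only asks for one nontrivial solution.

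\textbf{A simplification.} For the pointwise derivative you can skip the direct computation. Write $v(x)=1-\sqrt{\lambda}\int_{[0,x)}v\operatorname{d}\mu_h$, where $v$ is $h$-continuous, and invoke the fundamental theorem of calculus for $h$-continuous integrands, together with $v(x)=v(x^*)$ on $C_h$.
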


\begin{proof}
	By Theorem~\ref{gen_sol}, the function $u(t,x)=w(t)v(x)$ is a nontrivial solution of \eqref{calor} since, for the given $\l\in\bF$, $w$ and $v$ are nontrivial solutions of \eqref{p1} and \eqref{p2}, respectively.

	To satisfy the periodic boundary conditions
	\begin{equation*}
		\begin{dcases}
			u(0,t)=u(L,t), & t\in [0,T],\\
			\pd_hu(0,t)=\pd_hu(0,t), &
	\end{dcases} \end{equation*}
	we require that
	\begin{equation*}
		\begin{dcases}
			v(0)w(t)=v(L)w(t), & t\in [0,T],\\
			v'_{h}(0)w(t)=v'_{h}(L)w(t). &
	\end{dcases} \end{equation*}
	and since $w$ is nontrivial, this implies that $v(0)=v(L)$ and $v'_{h}(0)=v'_{h}(L)$.

Therefore, $v$ is a solution of problem \eqref{p2periodic}, which is a particular case of \eqref{p22periodic} with $P=0$, $Q=-\l$ and $f\equiv 0$.

Following the proof of \cite[Theorem~3.1]{periodic}, we decompose \eqref{p2periodic} into two first\--order problems.
	First, we consider the problem
	\begin{equation}\label{pp_1}
		\begin{dcases}
			u_{h}'(x)+\sqrt\lambda u(x)=0, & h-a. e. \ x\in [0,L], \\
			u(0)=u(L).
		\end{dcases}
	\end{equation}

Take $u_1(x)=\exp_g(-\sqrt{\l};0,x),$ $x\in[0,L]$. By Theorem \ref{orden1} and using $\exp_g(-\sqrt{\l};0,L)=1$, $u_1$ is a solution to problem \eqref{pp_1}.

	 Now we consider the problem
	\begin{equation}\label{pp_2}
		\begin{dcases}
			u_{h}'(x)-\sqrt\lambda u(x)=u_{1}(x), & h-a.e. \ x\in [0,L], \\
			u(0)=u(L).
		\end{dcases}
	\end{equation}
	Let $v$ be its unique $h$\--absolutely continuous solution. Since $v$ solves \eqref{pp_2}, we have that $v(0)=v(L)$. Moreover, $v'_{h}(x)=\sqrt{\lambda}v(x)+u_{1}(x).$
	Differentiating with respect to $h$ both sides of the equality, we get
	\[v''_{h}(x)=\sqrt\lambda v'_{h}(x)+(u_{1})'_{h}(x)=\sqrt\lambda(\sqrt{\lambda}v(x)+u_{1}(x))+(u_{1})'_{h}(x).\]
	Using that $u_{1}$ solves \eqref{pp_1}, we obtain
	\[v''_{h}(x)=\lambda v(x)+\sqrt{\lambda}u_{1}(x)-\sqrt{\lambda}u_{1}(x)=\lambda v(x).\]
	Finally, since $u_{1}$ is a solution of \eqref{pp_1} and $v$ solves \eqref{pp_2}, we have that
	\[v'_{h}(0)=\sqrt{\lambda}v(0)+u_{1}(0)=\sqrt{\lambda}v(L)+u_{1}(L)=v'_{h}(L)\]
	Therefore, $v$ is the unique $h$\--absolutely continuous solution of \eqref{p2periodic}. 	Consequently, the function $u(t,x)=w(t)v(x)$ is a solution of separated variables of \eqref{periodic}.
\end{proof}

\begin{rem}
	We impose the condition $\exp_g(-\sqrt{\l};0,L)=1$ so that problem \eqref{pp_1} admits a nontrivial solution. Without this hypothesis, we would not be able to affirm that function $v$ is nontrivial.
\end{rem}

\subsection{Periodicity of the derivators}

We begin this part by revisiting the classical heat equation:
\begin{equation}\label{calor_clasica}
	u_{t}(t,x)-c^{2}u_{xx}(t,x), \quad (t,x)\in[0,T]\times [0,L],
\end{equation}
for some $T,L\in \mathbb{R}^{+}$, which corresponds to a particular case of \eqref{calor} when $g=h=\operatorname{Id}$.\par
When working with Dirichlet boundary conditions, functions of the form
\begin{equation*}
	u_{n}(t,x)=w_{n}(t)\sin\left(\frac{n\pi}{L}x\right),
\end{equation*}
naturally arise, where $n\in\mathbb{N}$, and $w_{n}$ satisfies the equation
\[w'(t)=-\frac{n^2\pi^{2}}{L^2}c^{2}w(t).\]
These functions $u_{n}$ satisfy both equation \eqref{calor_clasica} and the boundary conditions
\[u(t,0)=u(t,L)=0, \quad t\in[0,T].\]
Similarly, for Neumann boundary conditions,
\[u_{x}(t,0)=u_{x}(t,L)=0, \quad t\in[0,T],\]
functions of the form
\begin{equation*}
	u_{n}(t,x)=w_{n}(t)\cos\left(\frac{n\pi}{L}x\right),
\end{equation*}
where $w_{n}$ satisfies,
\[w'(t)=-\frac{n^2\pi^{2}}{L^2}c^{2}w(t).\]
also appear.

These results rely on fundamental properties of the sine and cosine functions, such as their $2\pi$\--periodicity and symmetry. However, when we consider a general derivator $g:\mathbb{R}\rightarrow\mathbb{R}$, these properties may no longer hold. In particular, without imposing restrictions on $g$, it is not guaranteed that \[\sin_{g}\left(\frac{n\pi}{L};0,L\right)=0.\]

In this part, we aim to determine whether certain conditions on the derivator $g$ allow us to recover key properties of sine and cosine functions for their analogues in Stieltjes differential calculus.\par

Fix $\lambda\in \mathbb{R}^{-}$, and consider the problem
\begin{equation}\label{sistema}
	\begin{dcases}
		\begin{pmatrix} x \\ y \end{pmatrix}'_g(t) = \begin{pmatrix} 0 & \sqrt{-\lambda} \\ -\sqrt{-\lambda} & 0 \end{pmatrix} \begin{pmatrix} x \\ y \end{pmatrix}(t), \; g-a. e.\hspace{0.1cm} \, t \in [0,T), \\
		x(0)=0, \; y(0)=1,
	\end{dcases}
\end{equation}
whose unique solution is the pair $(\sin_g(\sqrt{-\lambda};0,t),\cos_g(\sqrt{-\lambda};0,t))$. The system
\begin{equation*}
\begin{pmatrix} x \\ y \end{pmatrix}'_g(t) = \begin{pmatrix} 0 & \sqrt{-\lambda} \\ -\sqrt{-\lambda} & 0 \end{pmatrix} \begin{pmatrix} x \\ y \end{pmatrix}(t),
\end{equation*}
is associated with the second\--order equation 	\eqref{p2}, which, for $\lambda<0$, admits the general solution
\[v(x)=c_{1}\sin_{g}(\sqrt{-\lambda};0,x)+c_{2}\cos_{g}(\sqrt{-\lambda};0,x)\]
for arbitrary constants $c_{1},c_{2}\in\mathbb{R}$.

Now, let $g:\mathbb{R}\rightarrow\mathbb{R}$ be a left\--continuous, nondecreasing function and assume that it satisfies the following periodicity condition:
\begin{equation}\label{g_condicion}
	g(x+L)-g(y+L)=g(x)-g(y), \text{ for all } x,y\in\mathbb{R}.
\end{equation}
Linear functions of the form $g(x)=ax+b$, where $a\geq 0$ and $b\in \mathbb{R}$, are examples of functions satisfying condition \eqref{g_condicion}.

Moreover, if $g$ satisfies \eqref{g_condicion}, then for every $k\in\mathbb{Z}$, it follows that
\[	g(x+kL)-g(y+kL)=g(x)-g(y), \text{ for all } x,y\in\mathbb{R}.\]

We now prove that, under condition \eqref{g_condicion}, there is a kind of invariance of the $g$\--measure under translations of length $kL$, for all $k\in\mathbb{Z}$.

\begin{pro}
	Let $g$ be a derivator satisfying condition \eqref{g_condicion} and let $A\in\mathcal{M}_{g}$. Then, for any $k\in \mathbb{Z}$, we have that $\mu_{g}(A)=\mu_{g}(A+kL)$,
	where $A+kL:=\left\{x+kL\in\mathbb{R}\ : \ x\in A\right\}$.
\end{pro}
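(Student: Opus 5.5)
The idea is to work with the outer measure \eqref{outer_m} and show that it is translation invariant under shifts by $kL$. Measurability and the measure identity then follow from Carathéodory's construction.

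First, I reduce to $k=1$ and $k=-1$. As noted after \eqref{g_condicion}, the periodicity condition holds with $L$ replaced by $kL$ for every $k\in\mathbb{Z}$. So it suffices to prove that $\mu_g^*(A+\tau)=\mu_g^*(A)$ for all $A\subset\mathbb{R}$, where $\tau=kL$ is any shift with $g(x+\tau)-g(y+\tau)=g(x)-g(y)$ for all $x,y$.

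The key step concerns half-open intervals. For $[a,b)\in\widetilde{\mathcal{C}}$ we have $[a,b)+\tau=[a+\tau,b+\tau)\in\widetilde{\mathcal{C}}$, and
\[
g(b+\tau)-g(a+\tau)=g(b)-g(a).
\]
Now take any countable cover $A\subset\bigcup_n[a_n,b_n)$. Its translate is a cover $A+\tau\subset\bigcup_n[a_n+\tau,b_n+\tau)$, and both covers have the same total sum $\sum_n (g(b_n)-g(a_n))$. Taking infimums gives $\mu_g^*(A+\tau)\le\mu_g^*(A)$. Applying the same argument with $-\tau$ to the set $A+\tau$ gives the reverse inequality. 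Hence $\mu_g^*(A+\tau)=\mu_g^*(A)$ for every $A\subset\mathbb{R}$.

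Next I check that $A+\tau\in\mathcal{M}_g$ whenever $A\in\mathcal{M}_g$, using the Carathéodory criterion. Fix a test set $E\subset\mathbb{R}$. Translation is a bijection, so
\[
E\cap(A+\tau)=\bigl((E-\tau)\cap A\bigr)+\tau,\qquad E\setminus(A+\tau)=\bigl((E-\tau)\setminus A\bigr)+\tau.
\]
By translation invariance of $\mu_g^*$, the Carathéodory identity for $A$ with test set $E-\tau$ transfers directly to the identity for $A+\tau$ with test set $E$. Therefore $A+\tau$ is measurable, and
\[
\mu_g(A+\tau)=\mu_g^*(A+\tau)=\mu_g^*(A)=\mu_g(A).
\]

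The only real subtlety is this measurability step: the statement implicitly requires $A+kL\in\mathcal{M}_g$. It is handled by the bijection argument above, since the Carathéodory condition is phrased entirely in terms of $\mu_g^*$. A secondary point is the case where the cover sums are infinite, but then equality of the sums makes both sides infinite, so no separate treatment is needed.
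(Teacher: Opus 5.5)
Your proposal is correct and follows essentially the same route as the paper: first the invariance $\mu_g^*(A+kL)=\mu_g^*(A)$ by translating covers by half-open intervals, then Carath\'eodory measurability of $A+kL$ via the identities $E\cap(A+kL)=((E-kL)\cap A)+kL$ and $E\setminus(A+kL)=((E-kL)\setminus A)+kL$. Your version is slightly more careful in that it proves the outer-measure invariance for arbitrary subsets of $\mathbb{R}$, which is what the Carath\'eodory step actually needs when applied to the test sets $E$ and $E-kL$.
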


\begin{proof}
	Since $A\in\mathcal{M}_{g}$, we have
	$\mu_{g}(A)=\mu_{g}^{*}(A)$. By the definition of the outer measure associated with $g$ given in \eqref{outer_m}, we have
	\[\mu_{g}^{*}(A)=\inf\left\{\sum_{n=1}^{\infty}\mu_{g}([a_{n},b_{n}))\ : \ A\subset\bigcup_{n=1}^{\infty}[a_{n},b_{n}) \right\}=\inf\left\{\sum_{n=1}^{\infty}(g(b_{n})-g(a_{n}))\ : \ A\subset\bigcup_{n=1}^{\infty}[a_{n},b_{n}) \right\}.\]
	For each $n\in\bN$, condition \eqref{g_condicion} implies that $g(b_{n})-g(a_{n})=g(b_{n}+kL)-g(a_{n}+kL)$.
	Moreover, $A\subset \bigcup_{n=1}^{\infty}[a_{n},b_{n})$ if and only if $A+kL\subset \bigcup_{n=1}^{\infty}[a_{n}+kL,b_{n}+kL)$.

	Therefore,
	\[\mu_{g}^{*}(A)=\inf\left\{\sum_{n=1}^{\infty}(g(b_{n}+kL)-g(a_{n}+kL))\ : \ A+kL\subset\bigcup_{n=1}^{\infty}[a_{n}+kL,b_{n}+kL) \right\}=\mu_{g}^{*}(A+kL).\]

	To conclude the proof, we need to show that, thanks to property \eqref{g_condicion}, $A+kL\in\mathcal{M}_{g}$, that is, $A+kL$ is $g$\--measurable.\par
	Since $A\in \mathcal{M}_{g}$, we know that, for every $E\subset\mathbb{R}$,	$\mu_{g}^{*}(E)=\mu_{g}^{*}(A\cap E)+\mu_{g}^{*}(E\backslash A)$.
	Given $E\subset\mathbb{R}$, we aim to show
	\begin{equation}\label{mu_star}
		\mu_{g}^{*}(E)=\mu_{g}^{*}(\left(A+kL\right)\cap E)+\mu_{g}^{*}(E\backslash \left(A+kL\right)).
	\end{equation}
	Note that
	\[(A+kL)\cap E=A\cap (E-kL)+kL, \quad E\backslash (A+kL)=(E-kL)\backslash A+kL.\]
	Hence, \eqref{mu_star} holds if and only if
	\begin{equation}\label{mu_star2}
		\mu_{g}^{*}(E)=\mu_{g}^{*}(A\cap (E-kL)+kL)+\mu_{g}^{*}((E-kL)\backslash A+kL).
	\end{equation}
	By translation invariance of the outer measure under $kL$, we have
	\begin{align*}\mu_{g}^{*}(E)= & \mu_{g}^{*}(E-kL),\\
	\mu_{g}^{*}(A\cap (E-kL)+kL)= & \mu_{g}^{*}(A\cap (E-kL)),\\\mu_{g}^{*}((E-kL)\backslash A+kL)= &\mu_{g}^{*}((E-kL)\backslash A).\end{align*}
	Substituting into \eqref{mu_star2}, we get
	\[\mu_{g}^{*}(E-kL)=\mu_{g}^{*}(A\cap (E-kL))+\mu_{g}^{*}((E-kL)\backslash A),\]
	which holds since $A\in\mathcal{M}_{g}$.
	Therefore, $A+kL\in\mathcal{M}_{g}$ and since $\mu_{g}(A+kL)=\mu^{*}_{g}(A+kL)=\mu_{g}^{*}(A)=\mu_{g}(A)$, we conclude that $\mu_{g}(A+kL)=\mu_{g}(A)$.
\end{proof}

Next, we aim to show that if $g$ satisfies the condition \eqref{g_condicion}, then the sets $C_{g}$ and $D_{g}$ are preserved under translations of length $L$. This idea is formalized in the following proposition.\par

\begin{pro}
	Assume that $g$ is a derivator satisfying condition \eqref{g_condicion}, and fix $k\in\mathbb{Z}$. If $x\in C_{g}$, then $x+kL\in C_{g}$. Similarly, if $x\in D_{g}$, then $x+kL\in D_{g}$.
\end{pro}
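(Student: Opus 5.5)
The plan is to use directly the consequence of \eqref{g_condicion} already noted in the text: for every $k\in\mathbb{Z}$ and all $x,y\in\mathbb{R}$,
\[
g(x+kL)-g(y+kL)=g(x)-g(y).
\]
Both claims then reduce to showing that the local behaviour of $g$ near $x$ is carried over verbatim to the point $x+kL$.

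\emph{The set $C_g$.} Let $x\in C_g$. Then there is $\varepsilon>0$ such that $g$ is constant on $(x-\varepsilon,x+\varepsilon)$. Take any $s\in(x+kL-\varepsilon,\,x+kL+\varepsilon)$. Then $s-kL\in(x-\varepsilon,x+\varepsilon)$, so the displayed identity (with the points $s-kL$ and $x$) gives
\[
g(s)-g(x+kL)=g(s-kL)-g(x)=0.
\]
Hence $g$ is constant on $(x+kL-\varepsilon,\,x+kL+\varepsilon)$, and therefore $x+kL\in C_g$.

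\emph{The set $D_g$.} Let $x\in D_g$. For $s>x$ the identity gives $g(s+kL)-g(x+kL)=g(s)-g(x)$. Letting $s\to x^{+}$, the right-hand side tends to $g(x^{+})-g(x)$. The limit on the left exists because $g$ is monotone, so
\[
\Delta g(x+kL)=g((x+kL)^{+})-g(x+kL)=\Delta g(x)>0.
\]
Thus $x+kL\in D_g$, and in fact the jumps coincide.

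The only step needing slight care is justifying the limit passage in the $D_g$ part. This is immediate: the right-hand limit of a nondecreasing function always exists, and as $s\to x^{+}$ the point $s+kL$ tends to $(x+kL)^{+}$. Nothing else is an obstacle.
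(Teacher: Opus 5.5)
Your proposal is correct and follows essentially the same route as the paper: for $C_g$ you translate the interval of constancy by $kL$ using the identity $g(x+kL)-g(y+kL)=g(x)-g(y)$, and for $D_g$ you pass to the right-hand limit in that identity to get $\Delta g(x+kL)=\Delta g(x)>0$. Your extra remark that the one-sided limit exists by monotonicity is a harmless (and welcome) bit of added rigor.
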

\begin{proof}
	Let $x\in C_{g}$ and $k\in\mathbb{Z}$. We aim to show that $x+kL\in C_{g}$. Since $x\in C_{g}$, there exists $\varepsilon>0$ such that $g$ is constant on the interval $(x-\varepsilon,x+\varepsilon)$. That is, for every $y\in(x-\varepsilon,x+\varepsilon)$, we have $g(y)-g(x)=0$. By condition \eqref{g_condicion}, it follows that for every $z\in(x+kL-\varepsilon,x+kL+\varepsilon)$, \[g(z)-g(x+kL)=g(y)-g(x)=0,\]
	where $y:=z-kL\in(x-\varepsilon,x+\varepsilon)$. Therefore, $g$ is also constant on $(x+kL-\varepsilon, x+kL+\varepsilon)$ and $x+kL\in C_{g}$.\par

	To prove the second claim, assume $x\in D_{g}$ and $k$ is a given integer. Then, by definition, $\Delta g(x)=g(x^{+})-g(x)>0$. Observe that
	\[0<g(x^{+})-g(x)=\lim\limits_{y\rightarrow x^+}(g(y)-g(x))=\lim\limits_{y\rightarrow x^+}(g(y+kL)-g(x+kL))=\Delta g(x+kL),\]
	where the second equality follows from \eqref{g_condicion}. Therefore, $\Delta g(x+kL)>0$, so $x+kL\in D_{g}$.
\end{proof}

\begin{rem}\label{Ltstar}
Recalling expression \eqref{tstar}, we get that $(t+kL)^{*}=t^{*}+kL$. To verify this, first note that if $t\notin C_{g}$, then $t+kL\notin C_{g}$, and the equality is immediate. If $t\in C_{g}$, then $t+kL\in C_{g}$ as shown above. Moreover, if $t\in (a_{n},b_{n})$, where $C_{g}$ is as in \eqref{Cg}, then $t\in (a_{n}+kL,b_{n}+kL)$, and thus $(t+kL)^{*}=b_{n}+kL=t^{*}+kL$.
\end{rem}

\begin{thm}\label{cond_period}
	Let $(x(t),y(t))$ be the unique solution of \eqref{sistema}. If the derivator $g$ satisfies \eqref{g_condicion}, then, for every $k\in \mathbb{Z}$, the pair $(\widetilde{x}(t),\widetilde{y}(t)):=(x(t+kL),y(t+kL))$ also satisfies the system
	\begin{equation*}
	\begin{pmatrix} x \\ y \end{pmatrix}'_g(t) = \begin{pmatrix} 0 & \sqrt{-\lambda} \\ -\sqrt{-\lambda} & 0 \end{pmatrix} \begin{pmatrix} x \\ y \end{pmatrix}(t).
	\end{equation*}
\end{thm}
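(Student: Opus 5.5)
The plan is to verify directly from Definition~\ref{def_gderiv} that shifting by $kL$ commutes with $g$\--differentiation. The key identity is
\[(f(\cdot+kL))'_g(t)=f'_g(t+kL),\]
valid for any $f$ and any $t$ at which the right-hand side exists. Once this is established, applying it componentwise to $(x,y)$ gives
\[\begin{pmatrix}\widetilde x\\ \widetilde y\end{pmatrix}'_g(t)=\begin{pmatrix}x\\ y\end{pmatrix}'_g(t+kL)=\begin{pmatrix} 0 & \sqrt{-\lambda} \\ -\sqrt{-\lambda} & 0 \end{pmatrix}\begin{pmatrix}\widetilde x\\ \widetilde y\end{pmatrix}(t)\]
wherever the system holds at $t+kL$.

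We prove the identity by distinguishing the three cases of the definition. We use the previous proposition that $C_g$ and $D_g$ are invariant under translation by $kL$, and Remark~\ref{Ltstar}, which gives $(t+kL)^*=t^*+kL$.

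First, suppose $t\notin D_g\cup C_g$. By the proposition applied with $-k$, we also have $t+kL\notin D_g\cup C_g$. Condition \eqref{g_condicion} gives $g(s)-g(t)=g(s+kL)-g(t+kL)$, so
\[\frac{\widetilde f(s)-\widetilde f(t)}{g(s)-g(t)}=\frac{f(s+kL)-f(t+kL)}{g(s+kL)-g(t+kL)}.\]
Letting $s\to t$ is the same as letting $\sigma=s+kL\to t+kL$, so the limits coincide.

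Next, suppose $t\in D_g\cup C_g$. Then we use the compact form in Remark~\ref{derivada_tstar}, together with $(t+kL)^*=t^*+kL$ and the same identity for the increments of $g$. This turns the one-sided limit defining $\widetilde f'_g(t)$ into the one defining $f'_g(t+kL)$. Throughout, we must make sure the denominators vanish at the same $s$ on both sides; this again follows from \eqref{g_condicion}.

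The step that needs the most care is the ``$g$\--a.e.'' qualifier. The system for $(x,y)$ holds outside a $\mu_g$\--null set $N$. We need that the exceptional set for $(\widetilde x,\widetilde y)$, namely $N-kL$, is also $\mu_g$\--null. This is exactly the translation invariance of $\mu_g$ established in the previous proposition.

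A related technical point is the domain. The solution is given on $[0,T)$, so the shifted pair makes sense where $t+kL$ lies in the domain of $(x,y)$. We therefore either regard the solution as extended to $\mathbb{R}$ (unique extension via Theorem~\ref{TFC1} applied on each bounded interval) or restrict $t$ accordingly. I would state this explicitly. Finally, $g$\--absolute continuity of $(\widetilde x,\widetilde y)$ also transfers: by \eqref{g_condicion}, the sums $\sum(g(b_n)-g(a_n))$ are invariant under translating the intervals. Hence the shifted pair is a genuine $\mathcal{AC}_g$ solution of the system.
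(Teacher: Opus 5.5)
Your proposal is correct and follows essentially the same route as the paper: a direct difference\--quotient computation using \eqref{g_condicion}, the translation invariance of $C_g$ and $D_g$, and $(t+kL)^*=t^*+kL$, split into the cases $t\notin C_g\cup D_g$ and $t\in C_g\cup D_g$. You go beyond the paper in two places: you handle the $g$\--a.e.\ exceptional set through the translation invariance of $\mu_g$, and you address the domain issue, whereas the paper simply assumes the derivatives exist at $t_0+kL$. If you extend the solution to negative times, that step rests on the regressivity $\det(I+A\Delta g(t))=1-\lambda\,\Delta g(t)^2>0$, not on Theorem~\ref{TFC1}; restricting $t$ instead avoids the issue altogether.
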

\begin{proof}
	Let $t_{0}\in [0,T]\backslash \left(C_{g}\cup D_{g}\right)$ and assume $x'_{g}(t_{0}+kL)$ and $y'_{g}(t_{0}+kL)$ exist. Then:
	\[\widetilde{x}'_{g}(t_{0})=\lim\limits_{t\rightarrow t_{0}}\frac{\widetilde{x}(t)-\widetilde{x}(t_{0})}{g(t)-g(t_{0})}=\lim\limits_{t\rightarrow t_{0}}\frac{x(t+kL)-x(t_{0}+kL)}{g(t)-g(t_{0})}=\lim\limits_{t\rightarrow t_{0}}\frac{x(t+kL)-x(t_{0}+kL)}{g(t+kL)-g(t_{0}+kL)}.\]
	If we now use the change of variables $s=t+kL$, we obtain that
	\[\lim\limits_{t\rightarrow t_{0}}\frac{x(t+kL)-x(t_{0}+kL)}{g(t+kL)-g(t_{0}+kL)}=\lim\limits_{s\rightarrow t_{0}+kL}\frac{x(s)-x(t_{0}+kL)}{g(s)-g(t_{0}+kL)}=x'_{g}(t_{0}+kL).\]
	Since $(x(t),y(t))$ solves \eqref{sistema},
	\[x'_{g}(t_{0}+kL)=\sqrt{-\lambda} y(t_{0}+kL)=\sqrt{-\lambda} \widetilde{y}(t_{0}),\]
	Hence, $\widetilde{x}'_{g}(t_{0})=\sqrt{-\lambda}\widetilde{y}(t_{0}).$\par
	If $t_{0}\in C_{g}\cup D_{g}$, then:
	\[\widetilde{x}'_{g}(t_{0})=\lim\limits_{t\rightarrow t_{0}^{+}}\frac{\widetilde{x}(t)-\widetilde{x}(t_{0}^{*})}{g(t)-g(t_{0}^{*})}=\lim\limits_{t\rightarrow t_{0}^+}\frac{x(t+kL)-x((t_{0}+kL)^{*})}{g(t+kL)-g((t_{0}+kL)^{*})}=x'_{g}(t_{0}+kL)=\sqrt{-\lambda}y(t_{0}+kL).\]
	Hence, $\widetilde{x}'_{g}(t_{0})=\sqrt{-\lambda}{y}(t_{0}+kL)=\sqrt{-\lambda}\widetilde{y}(t_{0})$.\par
	The computation for $\widetilde{y}'_{g}(t_{0})$ is analogous, yielding $\widetilde{y}'_{g}(t_{0})=-\sqrt{-\lambda}\widetilde{x}(t_{0})$, $t\in[0,T)$.
\end{proof}

Continuing with the notation from Theorem~\ref{cond_period}, note that if $\widetilde{x}(0)=0$ and $\widetilde{y}(0)=1$, then $(\widetilde{x},\widetilde{y})$ would also be a solution of \eqref{sistema}. By uniqueness of solution of this initial value problem, it would follow that $(\widetilde{x},\widetilde{y})=(x,y)$, meaning that the functions $x$ and $y$ ---that is, $\sin_{g}(\sqrt{-\lambda};0,\cdot)$ and $\cos_{g}(\sqrt{-\lambda};0,\cdot)$, respectively--- are $kL$\--periodic.\par
However, Theorem~\ref{cond_period} only guarantees
$\widetilde{x}(0)=x(kL)$ and $\widetilde{y}(0)=y(kL)$,
and, a priori, we have no information about the values $x(kL)$ and $y(kL)$. To address this, we turn to the series expansion of $\sin_{g}$ and $\cos_{g}$, given in \eqref{serie_sen} and \eqref{serie_cos}, respectively.

We now seek sufficient conditions under which the following hold:
\begin{equation}\label{sin_0_L}
	\sin_{g}(\sqrt{-\lambda};0,0)=\sin_{g}(\sqrt{-\lambda};0,L)=0,
\end{equation}
\begin{equation}\label{cos_0_L}
	\cos_{g}(\sqrt{-\lambda};0,0)=\cos_{g}(\sqrt{-\lambda};0,L)=1.
\end{equation}
The proofs follow directly from the series representations \eqref{serie_sen} and \eqref{serie_cos} and are therefore omitted.

\begin{pro}
	Let $g$ be a derivator such that
	\begin{equation}\label{serie_sen_L}
		\sum\limits_{n=0}^{\infty}(-1)^{n}\left(\sqrt{-\lambda}\right)^{2n+1}\frac{g_{2n+1}(L)}{(2n+1)!}=0.
	\end{equation}
	Then, \eqref{sin_0_L} holds.
\end{pro}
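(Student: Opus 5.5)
The plan is to reduce both equalities in \eqref{sin_0_L} to evaluating the series representation \eqref{serie_sen} at the two endpoints. Set $\beta:=\sqrt{-\lambda}>0$, which is real and nonzero because $\lambda\in\mathbb{R}^{-}$. We then apply \eqref{serie_sen} with this $\beta$ in place of $\lambda$, which gives, for every $x\ge 0$,
\[
\sin_{g}(\beta;0,x)=\sum_{n=0}^{\infty}(-1)^{n}\beta^{2n+1}\frac{g_{2n+1}(x)}{(2n+1)!}.
\]

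The value at $0$ follows straight from Definition~\ref{sencos}: the initial condition $\sin_g(\beta;0,0)=0$ is built into the system \eqref{gsincos}. As a consistency check, the same follows from the series, since every $g$-monomial of positive degree centered at $0$ vanishes at $0$. Indeed, $g_{2n+1}(0)=(2n+1)\int_0^0 g_{2n}\,\mathrm{d}\mu_g=0$, so every term of the series is zero.

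The value at $L$ is obtained by evaluating the displayed identity at $x=L\ge 0$. Its right-hand side is exactly the series in \eqref{serie_sen_L}, which vanishes by hypothesis, so $\sin_g(\beta;0,L)=0$.

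The only step that needs care is justifying that \eqref{serie_sen} is valid at $x=L$ for the parameter $\beta$. This validity rests on \eqref{igualdad} holding for both $\beta i$ and $-\beta i$, that is, on the exponential series converging at $L$ and coinciding there with $\exp_g(\pm\beta i;0,L)$. I would handle this by citing \cite[Remark 5.15]{analytic}, as recalled before \eqref{igualdad}. Alternatively, I would note that $|\pm\beta i|^n g_n(L)/n!$ is dominated by the terms of a convergent series on $[0,L]$: an induction gives $0\le g_n(x)\le n!\,\cdots$, using the bound from \cite[Proposition 5.3]{analytic}. Combining the two absolutely convergent series termwise as in \eqref{serie_sen} is then legitimate. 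With that in place, the statement is immediate.
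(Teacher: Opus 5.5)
Your proposal is correct and is the argument the paper has in mind when it says the proof \emph{follows directly} from \eqref{serie_sen}. The value at $0$ comes from the initial condition (or from $g_{2n+1}(0)=0$), and the value at $L$ is exactly the hypothesis \eqref{serie_sen_L}, with the validity of \eqref{serie_sen} at $x=L$ secured by \cite[Remark 5.15]{analytic}. Your fallback justification is garbled: the bound ``$n!\,\cdots$'' is unfinished, and \cite[Proposition 5.3]{analytic} concerns points to the left of the center. The right tool there would be Proposition~\ref{bound_gn}, which gives $0\le g_n(L)\le (g(L)-g(0))^n$ and hence absolute convergence. This is not needed, however, since your primary citation already suffices.
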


\begin{pro}
	Let $g$ be a derivator such that \begin{equation*}\label{serie_cos_L}
		\sum\limits_{n=0}^{\infty}(-1)^{n}\left(\sqrt{-\lambda}\right)^{2n}\frac{g_{2n}(L)}{(2n)!}=1.
	\end{equation*}
	Then, \eqref{cos_0_L} holds.
\end{pro}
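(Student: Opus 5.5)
The plan is to evaluate the series representation \eqref{serie_cos} at the two points $x=0$ and $x=L$. Both points are admissible, since \eqref{serie_cos} holds for every $x\ge 0$, and throughout we take $b=\sqrt{-\lambda}>0$. Because $\lambda<0$, this $b$ is a positive real constant. In particular $b\in\mathcal{L}^1_g([0,T],\mathbb{R})$, so Proposition~\ref{prop_sencos} applies. Combined with \eqref{igualdad}, it yields
\[
\cos_{g}(\sqrt{-\lambda};0,x)=\sum_{n=0}^{\infty}(-1)^{n}\left(\sqrt{-\lambda}\right)^{2n}\frac{g_{2n}(x)}{(2n)!},\quad x\ge 0.
\]
This identity, with the series absolutely convergent, is the only tool needed.

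\textbf{Step 1: the point $x=0$.} The value $\cos_g(\sqrt{-\lambda};0,0)=1$ follows directly from the initial condition in Definition~\ref{sencos}. As a consistency check, it can also be read off from the series. We have $g_{0}\equiv 1$. For $n\ge 1$, Definition~\ref{g_monomials} gives $g_{n}(0)=n\int_{0}^{0}g_{n-1}\,\operatorname{d}\mu_g=0$, because the integral is over the empty set $[0,0)$. Hence only the $n=0$ term survives, and the series equals $1$ at $x=0$.

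\textbf{Step 2: the point $x=L$.} Here we apply the displayed identity at $x=L\ge 0$. Its right-hand side is exactly the series in the hypothesis, which by assumption equals $1$. Therefore $\cos_g(\sqrt{-\lambda};0,L)=1$, and together with Step 1 this proves \eqref{cos_0_L}.

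\textbf{Main obstacle.} The only delicate point is justifying that \eqref{serie_cos} is valid at $x=L$ and with the real parameter $\sqrt{-\lambda}$. This rests on the coincidence of $\exp_g(\pm i\sqrt{-\lambda};0,\cdot)$ with the exponential series on $[0,\infty)$, as stated in \cite[Remark 5.15]{analytic}. That coincidence needs the convergence (regressivity-type) conditions on the jumps of $g$, and these concern only points $x<0$. Since the paper uses \eqref{serie_cos} in this generality for $x\ge0$, I will invoke it as established. If needed, I will add a remark that the hypothesis implicitly presumes the convergence of the series at $L$.
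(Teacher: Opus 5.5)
Your proposal is correct and matches the argument the paper intends: the paper omits the proof, saying it follows directly from the series representation \eqref{serie_cos}. That is what you do, evaluating it at $x=0$ (or using the initial condition) and at $x=L$, where the hypothesis gives the value $1$. Your closing caveat can be dropped: for $x\ge 0$ one has $0\le g_{n}(x)\le (g(x)-g(0))^{n}$, so the series converges absolutely at $L$ with no condition on the jumps of $g$.
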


Combining these last results with those obtained in Section~\ref{solgeneral}, we state the following theorem.

\begin{thm}
	Let $g, h:\mathbb{R}\rightarrow\mathbb{R}$ be to left\--continuous, nondecreasing functions, $\lambda\in\mathbb{R}^{-}$, and let $L, T>0$. Suppose $g$ satisfies condition \eqref{serie_sen_L}. Then, for any $a\in\mathbb{R}$, the function
	\[u(t,x)=a\exp_{g}(\lambda c^{2};0,t)\sin_{h}(\sqrt{-\lambda};0,x)\] solves the problem
	\begin{equation*}
		\begin{dcases}
			\pd_{g}u(t,x)-c^{2}\pd_{h}^{2}u(t,x)=0, & (t,x)\in(0,T)\times(0,L),\\
			u(t,0)=u(t,L)=0.
		\end{dcases}
	\end{equation*}
	Under the same assumptions, for any $b\in \mathbb{R}$, the function
	\[u(t,x)=b\exp_{g}(\lambda c^{2};0,t)\cos_{h}(\sqrt{-\lambda};0,x),\]
	is a solution of the problem
	\begin{equation*}
		\begin{dcases}
			\pd_{g}u(t,x)-c^{2}\pd_{h}^{2}u(t,x)=0, & (t,x)\in(0,T)\times(0,L),\\
			u'_{h}(t,0)=u'_{h}(t,L)=0.
		\end{dcases}
	\end{equation*}
\end{thm}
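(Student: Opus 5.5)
The plan is to combine the separated-variables result of Section~\ref{solgeneral} with the series characterization of $\sin_h$ at $x=L$. One point needs fixing first. In the statement the spatial functions are $\sin_h$ and $\cos_h$, so the hypothesis that actually matters is condition \eqref{serie_sen_L} with the $h$\--monomials $h_{2n+1}(L)$ in place of $g_{2n+1}(L)$. I will read the statement this way, since the hypothesis on $g$ plays no role for the time factor.

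\textbf{Step 1: the equation.} Put $w(t)=\exp_{g}(\lambda c^{2};0,t)$. By Theorem~\ref{orden1} it solves \eqref{p1}. Since $\lambda<0$, the functions $v_1(x)=\sin_{h}(\sqrt{-\lambda};0,x)$ and $v_2(x)=\cos_{h}(\sqrt{-\lambda};0,x)$ are the two components of the solution of \eqref{sistema} with $h$ as derivator. Differentiating the system once more gives
\[
(v_1)''_h=\sqrt{-\lambda}\,(v_2)'_h=-(\sqrt{-\lambda})^2 v_1=\lambda v_1,
\]
and likewise $(v_2)''_h=\lambda v_2$. So both solve \eqref{p2}. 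This agrees with the general solution $c_1\sin_h+c_2\cos_h$ recorded before Theorem~\ref{cond_period}. Theorem~\ref{form_sol} (or Theorem~\ref{gen_sol}) then shows that $a\,w\,v_1$ and $b\,w\,v_2$ solve \eqref{calor}. The case $a=0$ or $b=0$ is trivial.

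\textbf{Step 2: Dirichlet conditions.} At $x=0$, the initial data in Definition~\ref{sencos} give $\sin_h(\sqrt{-\lambda};0,0)=0$. At $x=L$, I use \eqref{serie_sen}: for $x\ge 0$ the function $\sin_h$ equals its series in $h$\--monomials, and the hypothesis says that series vanishes at $x=L$. Hence $\sin_h(\sqrt{-\lambda};0,L)=0$, which is the analogue of the Proposition giving \eqref{sin_0_L}. Therefore
\[
u(t,0)=u(t,L)=a\,w(t)\cdot 0=0 \quad\text{for all } t.
\]

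\textbf{Step 3: Neumann conditions.} Here $\partial_h u(t,x)=b\,w(t)\,(\cos_h)'_h(x)$. From the system, $(\cos_h)'_h=-\sqrt{-\lambda}\,\sin_h$, so evaluating at $x=0$ and $x=L$ and using Step 2 gives zero at both endpoints.

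The main obstacle is that the system in Definition~\ref{sencos} only holds $h$\--a.e., while the Neumann condition is pointwise at the two points $0$ and $L$. To get around this, I would write $\cos_h$ via Proposition~\ref{prop_sencos} as a combination of the exponentials $\exp_h(\pm i\sqrt{-\lambda};0,\cdot)$. These have constant coefficients, so they solve $v'_h=\pm i\sqrt{-\lambda}\,v$ at every point; this can be justified either by the Stieltjes-analytic corollary of \cite{analytic} together with \eqref{igualdad}, or by term-by-term differentiation of the uniformly convergent $h$\--monomial series using Corollary~\ref{cor2.4}. Consequently the identity $(\cos_h)'_h=-\sqrt{-\lambda}\,\sin_h$ holds everywhere. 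The standing assumption $L\notin D_h\cup C_h$ ensures the derivative at $L$ is the genuine two-sided limit, so the evaluation at $L$ is meaningful.
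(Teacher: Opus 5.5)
Your proposal is correct and follows the paper's route. The paper gives no separate proof; it just combines the separation-of-variables results of Section~3 with the series condition, which yields $\sin(\sqrt{-\lambda};0,L)=0$, and from that both the Dirichlet conditions and, via $\cos'=-\sqrt{-\lambda}\,\sin$, the Neumann conditions follow. You are also right to read the hypothesis as a condition on the $h$\--monomials: with $g$ as the time derivator, as literally stated, it would be irrelevant. Your justification that the exponentials satisfy the derivative identity at every point, not just $h$\--a.e., fills in a detail the paper leaves implicit.
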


\section{General solution of the heat equation with Stieltjes derivatives and a derivator of two variables}\label{Two_derivators}

In Section~\ref{solgeneral}, we considered two derivators: one associated with the variable $t$ and another associated with the variable $x$. In this section, we study the heat equation driven by a single derivator depending on two variables. More precisely, we consider a function $G:\mathbb{R}^{2}\to\mathbb{R}$ such that, for each fixed $t\in\mathbb{R}$ and $x\in\mathbb{R}$, the functions $G(\cdot,x)$ and $G(t,\cdot)$ are derivators.

The particular case $G(t,x)=g(t)+h(x)$ ---see Section~\ref{ssd}--- extends in a straightforward manner the framework of product time scales (cf.~\cite[Chapter~6]{bohner2016multivariable}). However, more general choices of $G$ offer greater flexibility, allowing for intricate interactions between the measures associated with $t$ and $x$, which in turn influence the structure of the resulting differential equations.

 Next, we define the notion of derivator of a finite number of variables.
\begin{dfn}\label{def_nderiv}
	Given $n\in\mathbb{N}$, we say that a function $G:\mathbb{R}^{n}\rightarrow\mathbb{R}$ is a \textit{derivator of $n$ variables} (or an \textit{$n$\--variable derivator}) if, for each $i\in\{1,2,\ldots,n\}$, and for fixed values $x_{1},\ldots,x_{i-1},x_{i+1},\ldots,x_{n}$, the real\--valued function defined by \[x\mapsto G(x_{1},x_{2},\ldots,x_{i-1},x,x_{i+1},\ldots,x_{n}),\]
	is nondecreasing and left\--continuous.
\end{dfn}

We now assume that $G:\mathbb{R}^{2}\rightarrow\mathbb{R}$ is a derivator of two variables, and we proceed to define the derivatives of a function of two variables with respect to $G$.\par

\begin{rem}\label{notation_G}
	We introduce some convenient notation. For a fixed $t_{0}\in\mathbb{R}$, we define the function
	\[x\in\mathbb{R}\mapsto h_{t_{0}}(x):=G(t_{0},x)\in\mathbb{R}. \]
	Similarly, for a fixed $x_{0}\in\mathbb{R}$, we define
	\[t\in\mathbb{R}\mapsto g_{x_{0}}(t):=G(t,x_{0})\in\mathbb{R}.\]
\end{rem}

\begin{dfn}
	Let $I,J\subset\mathbb{R}$ be intervals, and let $f:I\times J\rightarrow\mathbb{F}$ be a function. Given $(t,x)\in I\times J$, we define the \textit{$G$\--derivative of $f$ with respect to $t$ at $(t,x)$} as the following limit, provided it exists:
	\begin{equation*}
		\frac{\pd f}{\pd_{G} t}(t,x)=\begin{dcases}
			\lim\limits_{s\rightarrow t}\frac{f(s,x)-f(t,x)}{G(s,x)-G(t,x)}, & t\notin D_{g_{x}}\cup C_{g_{x}},\\
			\lim\limits_{s\rightarrow t^{+}}\frac{f(s,x)-f(t^{*},x)}{G(s,x)-G(t^{*},x)}, & t\in D_{g_{x}}\cup C_{g_{x}},
		\end{dcases}
	\end{equation*}
	where we have adapted the notation \eqref{tstar} to the derivator $G(\cdot,x)=g_{x}(\cdot)$.\par
	Analogously, the \textit{$G$\--derivative of $f$ with respect to $x$ at $(t,x)$} is defined as
	\begin{equation*}
		\frac{\pd f}{\pd_{G} x}(t,x)=\begin{dcases}
			\lim\limits_{y\rightarrow x}\frac{f(t,y)-f(t,x)}{G(t,y)-G(t,x)}, & t\notin D_{h_{t}}\cup C_{h_{t}},\\
			\lim\limits_{y\rightarrow x^{+}}\frac{f(t,y)-f(t,x^{*})}{G(t,y)-G(t,x^{*})}, & t\in D_{h_{t}}\cup C_{h_{t}},
		\end{dcases}
	\end{equation*}
	where $x^{*}$ is defined analogously to \eqref{tstar}, but for the derivator $G(t,\cdot)=h_{t}(\cdot)$, and provided the corresponding limit exist.
\end{dfn}
\begin{rem}\label{remintd}
	To understand this type of derivative, it is important to consider the case of function $G$ that has nonzero partial derivatives. In that case,
	\[\frac{\pd u}{\pd_{G}t}(t,x)=	\lim\limits_{y\rightarrow x}\frac{u(t,y)-u(t,x)}{G(t,y)-G(t,x)}=	\lim\limits_{y\rightarrow x}\frac{\frac{u(t,y)-u(t,x)}{y-x}}{\frac{G(t,y)-G(t,x)}{y-x}}=\frac{\frac{\pd u}{\pd t}(t,x)}{\frac{\pd G}{\pd t}(t,x)},\]
	and, analogously,
	\[\frac{\pd u}{\pd_{G}x}(t,x)=\frac{\frac{\pd u}{\pd x}(t,x)}{\frac{\pd G}{\pd x}(t,x)}.\]

	We may wonder now which $G$ returns the usual partial derivatives $\bR^2$. It is enough to impose the equations
	\[\frac{\pd u}{\pd t}(t,x)=\frac{\pd u}{\pd_{G}t}(t,x)=\frac{\frac{\pd u}{\pd t}(t,x)}{\frac{\pd G}{\pd t}(t,x)},\quad \frac{\pd u}{\pd x}(t,x)=\frac{\pd u}{\pd_{G}x}(t,x)=\frac{\frac{\pd u}{\pd x}(t,x)}{\frac{\pd G}{\pd x}(t,x)},\]
	from which we get
	\[\frac{\pd G}{\pd t}(t,x)=1,\quad \frac{\pd G}{\pd x}(t,x)=1,\]
	that is, $G(t,x)=t+x+C$ for some constant $C$.
\end{rem}

Having introduced the concepts of a derivator of two variables and the derivatives with respect to these function, we now present the heat equation associated with a derivator of two variables $G$:
\begin{equation}\label{Gcalor}
	\frac{\pd u}{\pd_{G}t}(t,x)-c^{2}\frac{\pd^2 u}{\pd_{G}x^2}(t,x)=0,
\end{equation}
where $c\in\mathbb{R}$ is a positive constant.

\subsection{Particular case: sum of derivators of one variable}\label{ssd}

We start by considering the kind of function $G$ that arised in Remark~\ref{remintd}, that is,
\begin{equation}\label{sum_deriv}
	G(t,x)=g(t)+h(x),
\end{equation}
for some functions $g,h:\mathbb{R}\rightarrow\mathbb{R}$.
The function $G$ defined by \eqref{sum_deriv} satisfies Definition~\ref{def_nderiv} if and only if $g$ and $h$ are single\--variable derivators.\par

We begin by examining the $G$\--derivative of a function with respect to $t$.

\begin{pro}\label{Gderiv_sum}
	Let $I,J\subset\mathbb{R}$ be intervals and fix $(t,x)\in I\times J$. Let $u:I\times J\rightarrow\mathbb{F}$ be a function, and let
	$G$ be a derivator of two variables of the form \eqref{sum_deriv}. Then the $G$\--derivative of $u$ with respect to $t$ at $(t,x)$ exists if and only if the $g$\--derivative $\pd_{g}u(t,x)=u'_{g}(\cdot,x)|_{t}$ exists. Moreover,
	\begin{equation*}
		\frac{\pd u}{\pd_{G}t}(t,x)=\pd_{g}u(t,x).
	\end{equation*}
\end{pro}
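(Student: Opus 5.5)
The plan is to show that, with $x$ fixed, every ingredient of the definition of $\frac{\pd u}{\pd_G t}(t,x)$ coincides literally with the corresponding ingredient in Definition~\ref{def_gderiv} of $u'_g(\cdot,x)|_t$. The key observation is that for fixed $x$ the one-variable derivator $g_x(\cdot)=G(\cdot,x)=g(\cdot)+h(x)$ (notation of Remark~\ref{notation_G}) differs from $g$ only by the additive constant $h(x)$.

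\textbf{Step 1: the auxiliary objects coincide.} For all $s,t\in\bR$,
\[
g_x(s)-g_x(t)=g(s)-g(t),
\]
and likewise $g_x(t^+)-g_x(t)=g(t^+)-g(t)$. Hence $\Delta g_x=\Delta g$, which gives $D_{g_x}=D_g$. Moreover, $g_x$ is constant on an interval exactly when $g$ is, so $C_{g_x}=C_g$, with the same decomposition \eqref{Cg} into intervals $(a_n,b_n)$. Consequently, the point $t^*$ defined in \eqref{tstar} is the same whether it is computed for $g_x$ or for $g$.

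\textbf{Step 2: the defining quotients coincide.} Using the compact form of Remark~\ref{derivada_tstar}, split into the two cases $t\notin D_g\cup C_g$ and $t\in D_g\cup C_g$. In each case the difference quotient defining $\frac{\pd u}{\pd_G t}(t,x)$,
\[
\frac{u(s,x)-u(t^{(*)},x)}{G(s,x)-G(t^{(*)},x)}=\frac{u(s,x)-u(t^{(*)},x)}{g(s)-g(t^{(*)})},
\]
is identical, for each admissible $s$, to the quotient defining $u'_g(\cdot,x)|_t$. Here $t^{(*)}$ denotes $t$ in the first case and $t^*$ in the second.

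The limits are also taken in the same sense in both definitions: two-sided in the first case and from the right in the second. This relies on Step~1, which ensures the case distinction is the same for $g_x$ and for $g$. Therefore one limit exists if and only if the other does, and when they exist they are equal. This yields both the equivalence of existence and the identity $\frac{\pd u}{\pd_G t}(t,x)=\pd_g u(t,x)$.

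\textbf{Main point of care.} The only real subtlety is Step~1, namely confirming that the case split and the point $t^*$ agree for $g_x$ and $g$. Once that is in place, the rest is immediate, since both the quotients and the limit modes match term by term.
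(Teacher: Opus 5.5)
Your proof is correct and follows the same route as the paper. Both rest on the observation that $G(s,x)-G(t,x)=g(s)-g(t)$, so the defining difference quotients and the corresponding limits coincide case by case.

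Your Step 1 explicitly checks that $D_{g_x}=D_g$, $C_{g_x}=C_g$, and that $t^*$ is the same for $g_x$ and $g$; the paper uses this without stating it. You also place $t^*$ in the correct case, whereas the paper's displayed formulas write $u(t,x)$ in the case $t\in C_g\cup D_g$, where $u(t^*,x)$ is needed.
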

\begin{proof}
	We distinguish two cases:
	\begin{enumerate}
		\item If $t\in C_{g}\cup D_{g}$, then
		\begin{equation*}
			\frac{\pd u}{\pd_{G}t}(t,x)=\lim\limits_{s\rightarrow t^+}\frac{u(s,x)-u(t,x)}{g(s)+h(x)-\left(g(t)+h(x)\right)}=\lim\limits_{s\rightarrow t^+}\frac{u(s,x)-u(t,x)}{g(s)-g(t)}=\pd_{g}u(t,x).
		\end{equation*}

		\item If $t\notin C_{g}\cup D_{g}$, then
		\begin{equation*}
			\frac{\pd u}{\pd_{G}t}(t,x)=\lim\limits_{s\rightarrow t}\frac{u(s,x)-u(t^{*},x)}{g(s)+h(x)-\left(g(t^{*})+h(x)\right)}=\lim\limits_{s\rightarrow t}\frac{u(s,x)-u(t^{*},x)}{g(s)-g(t^{*})}=\pd_{g}u(t,x).
		\end{equation*}
	\end{enumerate}
	Thus, $\pd u/\pd_{G}t$ exists at $(t,x)$ if and only if $\pd_{g}u(t,x)$ exists, and when they exist, they are equal.
\end{proof}

We now present the analogous result for the $G$\--derivative with respect to $x$. The proof is omitted, as it mirrors that of Proposition~\ref{Gderiv_sum}.

\begin{pro}
	Let $I,J\subset\mathbb{R}$ intervals, and fix $(t,x)\in I\times J$. Let $u:I\times J\rightarrow\mathbb{F}$ be a function, and let $G$ be a derivator of two variables of the form \eqref{sum_deriv}. Then the $G$\--derivative of $u$ with respect to $x$ at $(t,x)$ exists if and only if the $h$\--derivative $\pd_{h}u(t,x)=u'_{h}(t,\cdot)|_{x}$ exists. Moreover,
	\begin{equation*}
		\frac{\pd u}{\pd_{G}x}(t,x)=\pd_{h}u(t,x).
	\end{equation*}
\end{pro}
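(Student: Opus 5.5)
The plan is to mirror the proof of Proposition~\ref{Gderiv_sum}, with the roles of the variables exchanged. The whole argument rests on one elementary identity: for $G(t,x)=g(t)+h(x)$ and fixed $t$,
\[
h_t(y)=G(t,y)=h(y)+g(t).
\]
So $h_t$ is just $h$ shifted by the constant $g(t)$. Differences are unchanged by such a shift: $h_t(y)-h_t(z)=h(y)-h(z)$ for all $y,z$.

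First, I would record the structural consequences of this identity.
\begin{itemize}
\item Jumps are preserved: $\Delta h_t(y)=\Delta h(y)$, hence $D_{h_t}=D_h$.
\item Constancy intervals are preserved: $h_t$ is constant on an interval exactly when $h$ is, hence $C_{h_t}=C_h$ with the same component intervals $(a_n,b_n)$.
\item As a result, the point $x^*$ from \eqref{tstar}, computed for $h_t$, coincides with the one computed for $h$.
\end{itemize}
This ensures that the case split in the definition of $\partial u/\partial_G x$ matches exactly the case split in Definition~\ref{def_gderiv} for $\partial_h u$. I read the condition written there as ``$x\notin D_{h_t}\cup C_{h_t}$'', which is clearly what is intended.

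Second, I would treat the two cases.
\begin{itemize}
\item If $x\notin C_h\cup D_h$, then for every $y$,
\[
\frac{u(t,y)-u(t,x)}{G(t,y)-G(t,x)}=\frac{u(t,y)-u(t,x)}{h(y)-h(x)}.
\]
The two limits as $y\to x$ therefore exist simultaneously and agree.
\item If $x\in C_h\cup D_h$, the same cancellation of $g(t)$ applies with $x$ replaced by $x^*$ in the difference quotients. Taking the one-sided limit $y\to x^+$ gives the equivalence and the equality with $\partial_h u(t,x)$, using the compact form of Remark~\ref{derivada_tstar}.
\end{itemize}

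The only point needing care is the first step. We must check that the domains of the quotients coincide: the denominators vanish for exactly the same $y$, since $h_t(y)-h_t(x)=h(y)-h(x)$. We also need the identification $D_{h_t}=D_h$, $C_{h_t}=C_h$, so that the correct branch of each definition is used. Both follow immediately from the displayed identity, so I expect no genuine obstacle.
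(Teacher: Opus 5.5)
Your proposal is correct and is the argument the paper intends. The paper omits this proof, saying only that it mirrors Proposition~\ref{Gderiv_sum}: the additive constant $g(t)$ cancels in the difference quotients in each branch of the definition. Your explicit check that $D_{h_t}=D_h$ and $C_{h_t}=C_h$, so that the same branch and the same $x^*$ are used, is what justifies that mirroring. You also place $x^*$ more carefully than the written proof of Proposition~\ref{Gderiv_sum}, where $t$ and $t^{*}$ appear in the wrong branches.
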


We conclude that a function of the form $u(t,x)=w(t)v(x)$ is a solution of \eqref{Gcalor} if and only if it satisfies
\begin{equation*}
	\pd_gu(t,x)-c^2\pd_h^2u(t,x)=0.
\end{equation*}

That is, $u$ solves \eqref{Gcalor} if and only if it satisfies equation \eqref{calor}. Therefore, the results from Section~\ref{solgeneral} apply.

\subsubsection{$G$\--polynomials}
In \cite{widder1975heat}, the \textit{heat polynomial} of degree $n$ is defined by
\begin{equation}\label{def_heatpol}
	v_n(t,x):=\sum_{k=0}^{\lfloor n / 2\rfloor} \frac{n!}{(n-2k)!k!} t^kx^{n-2k}.
\end{equation}
These functions are used to construct a series of the form
\[u(t,x)=\sum_{n=0}^{\infty}a_nv_n(t,x)=\sum_{n=0}^{\infty}a_n\sum_{k=0}^{\lfloor n / 2\rfloor} \frac{n!}{(n-2k)!k!} t^kx^{n-2k},\]
which solves the classical heat equation (with diffusion constant $c=1$):
\[\frac{\pd u}{\pd t}-\frac{\pd^2u}{\pd x^2}=0.\]
In this representation, $u$ is expressed as a series of two\--variable polynomials $t^mx^n$, for $m,n\in\{0,1,2,\ldots\}$. In this section,
we aim to construct polynomials $G_{m,n}(t,x)$ that serve as counterparts to the monomials $t^mx^n$. Using them, we seek a solution to equation \eqref{Gcalor} of the form
\[u(t,x)=\sum_{m=0}^{\infty}\sum_{n=0}^{\infty}a_{m,n}G_{m,n}(t,x).\]
To this end, we follow the ideas in \cite{analytic} and
consider the operators
\[
\cI H(t,x)=\int_{0}^t H(s,x)\,\operatorname{d}\mu_{g_x}(s),\quad
\cK H(t,x)=\int_{0}^x H(t,y)\,\operatorname{d}\mu_{h_t}(y),
\]
defined for suitably integrable functions $H$.

For the polynomials $G_{m,n}(t,x)$ to be well defined, we need to impose certain restrictions on the derivator $G$. From now on, we restrict ourselves to those derivators of the form \eqref{sum_deriv}. In this case, we have
\[
\cI H(t,x)=\int_{0}^t H(s,x)\,\operatorname{d}\mu_{g}(s),\quad
\cK H(t,x)=\int_{0}^x H(t,y)\,\operatorname{d}\mu_{h}(y).
\]
First, we prove that these operators commute when applied to functions of separated variables.
\begin{pro}\label{int_sepvar}
	Let $G(t,x)=g(t)+h(x)$ be a derivator. If $H(t,x)=H_1(t)H_2(x)$, $(t,x)\in [0,T]\times[0,L]$ is a function of separated variables, then
	\[\cI\cK H(t,x)=\cK\cI H(t,x), \quad(t,x)\in[0,T]\times[0,L].\]
\end{pro}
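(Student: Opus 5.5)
The plan is to compute both sides directly using the separated structure of $H$ and the linearity of the Lebesgue--Stieltjes integral. Since $G(t,x)=g(t)+h(x)$, the measures $\mu_{g_x}$ and $\mu_{h_t}$ are $\mu_g$ and $\mu_h$ for every $x$ and every $t$. This is what makes $\cI$ and $\cK$ have the simple form displayed just before the statement. Throughout we assume, as implicit in the statement, that $H_1\in\mathcal{L}^1_g([0,T),\bF)$ and $H_2\in\mathcal{L}^1_h([0,L),\bF)$, so that every integral below makes sense.

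First compute $\cK H$. For fixed $t$, $H(t,y)=H_1(t)H_2(y)$, and $H_1(t)$ is a constant with respect to $y$. By linearity of the integral,
\[\cK H(t,x)=H_1(t)\int_0^x H_2\operatorname{d}\mu_h=H_1(t)K_2(x),\qquad K_2(x):=\int_0^x H_2\operatorname{d}\mu_h.\]
In particular, $\cK H$ is again a function of separated variables. Applying $\cI$ and pulling out the factor $K_2(x)$, which does not depend on $s$, gives
\[\cI\cK H(t,x)=K_2(x)\int_0^t H_1\operatorname{d}\mu_g=I_1(t)K_2(x),\qquad I_1(t):=\int_0^t H_1\operatorname{d}\mu_g.\]
The symmetric computation gives $\cI H(t,x)=I_1(t)H_2(x)$, and then $\cK\cI H(t,x)=I_1(t)K_2(x)$. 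Comparing the two expressions yields the identity for every $(t,x)\in[0,T]\times[0,L]$.

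The only point needing care, rather than a genuine obstacle, is justifying that each intermediate function is integrable in the variable being integrated. At the second stage the integrand is a constant multiple of $H_1$ or of $H_2$, so integrability is inherited from the hypotheses on $H_1$ and $H_2$. One should also check that the sign convention for $\int_x^y$ introduced in Section~\ref{Prelim} is compatible with pulling out constants; it is, since that convention is linear as well. No Fubini-type argument is needed, precisely because $H$ has separated variables.
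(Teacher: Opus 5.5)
Your proposal is correct and takes essentially the same route as the paper. Both compute each side directly, pull the factor that does not depend on the integration variable out of the integral, and reach the common value $\left(\int_0^t H_1\operatorname{d}\mu_g\right)\left(\int_0^x H_2\operatorname{d}\mu_h\right)$; your explicit integrability assumptions on $H_1$ and $H_2$ just make precise what the paper leaves implicit.
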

\begin{proof}
	We have
	\begin{align*}
		\cI\cK H(t,x)&=\int_{0}^t \cK H(s,x)\,\operatorname{d}\mu_{g}(s)=\int_{0}^t \int_{0}^x H(s,y)\,\operatorname{d}\mu_{h}(y)\,\operatorname{d}\mu_{g}(s)\\
		&=\left(\int_{0}^{t}H_1(s)\, \operatorname{d}\mu_{g}(s)\right)\left(\int_{0}^{x}H_2(y)\, \operatorname{d}\mu_{h}(y)\right),
	\end{align*}
	and, analogously,
	\begin{align*}
		\cK\cI H(t,x)&=\int_{0}^x \cI H(t,y)\,\operatorname{d}\mu_{h}(y)=\int_{0}^x \int_{0}^t H(s,y)\,\operatorname{d}\mu_{g}(s)\,\operatorname{d}\mu_{h}(y)\\
		&=\left(\int_{0}^{t}H_1(s)\, \operatorname{d}\mu_{g}(s)\right)\left(\int_{0}^{x}H_2(y)\, \operatorname{d}\mu_{h}(y)\right),
	\end{align*}
	from which the result is immediate.
\end{proof}

\begin{dfn}
	Let $G$ be a derivator of the form \eqref{Gderiv_sum}. We define $G_{0,0}(x)=1$ for every $(t,x)\in [0,T]\times[0,L]$. For each $m,n\in\mathbb{N}$, we define $G_{m,n}:[0,T]\times[0,L]\rightarrow\mathbb{R}$ recursively by the equations
	\[  G_{m,0}(t,x)=
	m\int_{0}^tG_{m-1,0}(s,x)\operatorname{d}\mu_{g}(s),\qquad  G_{m,n}(t,x)=
	n\int_{0}^xG_{m,n-1}(t,y)\operatorname{d}\mu_{h}(y).\]
\end{dfn}

\begin{rem}
	From the definition of the operators $\cI$ and $\cK$, it follows directly that
	\[G_{m,0}(t,x)=m\,\cI G_{m-1,0}(t,x), \quad G_{m,n}(t,x)=n\,\cK G_{m,n-1}(t,x).\]
\end{rem}

\begin{rem}
	As a consequence of Theorem \ref{TFC2}, we have that
	\begin{equation*}
		\frac{\pd G_{m,n}}{\pd_{G}t}(t,x)=mG_{m-1,n}(t,x),
	\end{equation*}
	and
	\begin{equation*}
		\frac{\pd G_{m,n}}{\pd_{G}x}(t,x)=nG_{m,n-1}(t,x).
	\end{equation*}
\end{rem}

If $g$ and $h$ are derivators such that $g(0)=h(0)=0$, then we obtain an explicit expression for the $G$\--polynomials in terms of the $g$\--monomials $g_{0,m}\equiv g_m$ and the $h$\--monomials $h_{0,n}\equiv h_n$, introduced in Definition~\ref{g_monomials}.

\begin{pro}\label{form_Gpol}
	Let $g,h$ be derivators such that $g(0)=h(0)=0$. Then, for $m,n\in \mathbb{N}\cup \{0\}$,
	\[G_{m,n}(t,x)=g_m(t)h_n(x), \quad (t,x)\in[0,T]\times[0,L].\]
	\begin{proof}
		First, we prove by induction on $m$ that
		\begin{equation}\label{form_Gm0}
			G_{m,0}(t,x)=g_m(t), \quad (t,x)\in[0,T]\times[0,L].
		\end{equation}
		For $m=0$, the result is immediate. The case $m=1$ follows from $g(0)=0$:
		\begin{equation*}
			G_{1,0}(t,x)=\int_{0}^{t}1\operatorname{d}\mu_{g}(s)=g(t)-g(0)=g(t)=g_1(t).
		\end{equation*}
		Assume that \eqref{form_Gm0} holds for some  $m\in\mathbb{N}$. Then, 		\begin{align*}
			G_{m+1,0}(t,x)&=(m+1)\int_{0}^tG_{m,0}(s,x)\operatorname{d}\mu_{g}(s)=(m+1)\int_{0}^tg_m(s)\operatorname{d}\mu_{g}(s) \\
			&=(m+1)\frac{g_{m+1}(t)}{m+1}=g_{m+1}(t).
		\end{align*}
		Now, for a fixed $m\in\bN\cup\{0\}$, we prove \eqref{form_Gpol} by induction on $n$. For $n=1$,
		\[		G_{m,1}(t,x)=\int_{0}^{x}G_{m,0}(t,y)\operatorname{d}\mu_{h}(y)=\int_{0}^{x}g_m(t)\operatorname{d}\mu_{h}(y)=g_m(t)\left(h(x)-h(0)\right)=g_m(t)h(x).\]
		Assume that \eqref{form_Gpol} holds for some $n\in\bN$. Then,
		\begin{align*}
			G_{m,n+1}(t,x)&=(n+1)\int_{0}^xG_{m,n}(t,y)\operatorname{d}\mu_{h}(y)=(n+1)\int_{0}^xg_m(t)h_n(y)\operatorname{d}\mu_{h}(y) \\
			&=(n+1)g_m(t)\frac{h_{n+1}(x)}{n+1}=g_m(t)h_{n+1}(x).
		\end{align*}
	\end{proof}
\end{pro}

If $g$ is a continuous derivator, then it follows from \cite[Proposition 3.15]{analytic} that $g_n(t)=g(t)^n$, $t\in\bR$. This leads us to the following result.
\begin{cor}\label{form_Gpol_cont}
	Let $g,h$ be continuous derivators such that $g(0)=h(0)=0$. Then, for $m,n\in \mathbb{N}\cup \{0\}$, we have that $G_{m,n}(t,x)=g(t)^mh(x)^n$ for every $(t,x)\in[0,T]\times[0,L]$.
\end{cor}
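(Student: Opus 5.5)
The corollary follows by combining Proposition~\ref{form_Gpol} with the identification of $g$\--monomials for continuous derivators. Since $g$ and $h$ are derivators with $g(0)=h(0)=0$, Proposition~\ref{form_Gpol} already gives
\[G_{m,n}(t,x)=g_m(t)h_n(x),\quad (t,x)\in[0,T]\times[0,L],\ m,n\in\bN\cup\{0\}.\]
So it only remains to identify $g_m(t)=g(t)^m$ and $h_n(x)=h(x)^n$ on the relevant intervals.

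For this I would invoke \cite[Proposition 3.15]{analytic}, quoted just before the statement. For a continuous derivator, the $g$\--monomials centered at $x_0$ satisfy $g_{x_0,n}(t)=(g(t)-g(x_0))^n$. With center $0$ and $g(0)=0$ this reads $g_n(t)=g(t)^n$, and likewise $h_n(x)=h(x)^n$. Substituting into the previous display gives the claim.

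To make this self-contained, I would also sketch why \cite[Proposition 3.15]{analytic} holds. The argument is an induction on $n$, and the step to check is
\[n\int_0^t g(s)^{n-1}\,\operatorname{d}\mu_g(s)=g(t)^n\]
for continuous $g$. When $g$ is continuous, $\mu_g$ has no atoms, so $D_g=\emptyset$. The map $s\mapsto g(s)^n$ is $g$\--differentiable with derivative $n g(s)^{n-1}$: at points outside $C_g$ the difference quotient $\frac{g(s)^n-g(t)^n}{g(s)-g(t)}$ tends to $ng(t)^{n-1}$ by continuity, and on $C_g$ the same holds through $t^*$. This derivative is bounded on $[0,t]$, and $g^n$ is $g$\--absolutely continuous, so Theorem~\ref{TFC1} gives the identity.

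The only point needing care, and it is minor, is matching the centers and the zero normalization in the cited proposition; the hypothesis $g(0)=h(0)=0$ handles both.
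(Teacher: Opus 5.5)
Your proposal is correct and follows the paper's route: the corollary is obtained by combining Proposition~\ref{form_Gpol} with \cite[Proposition 3.15]{analytic}, which gives $g_n(t)=g(t)^n$ and $h_n(x)=h(x)^n$ for continuous derivators vanishing at $0$. Your extra sketch of that cited identity is sound; it uses $D_g=\emptyset$, the fact that the $g$\--derivative of $g^n$ is $n g^{n-1}$, and Theorem~\ref{TFC1} with $g(0)=0$, and it simply makes explicit what the paper takes from the reference.
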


\begin{rem}
	Taking $g$ and $h$ to be the identity functions  in Corollary~\ref{form_Gpol_cont}  yields the classical two\--variable monomials $t^mx^n$.
\end{rem}

We now seek a solution to \eqref{Gcalor} of the form
\begin{equation}\label{series_sol}
	u(t,x)=\sum\limits_{m=0}^\infty\sum\limits_{n=0}^\infty a_{m,n}G_{m,n}(t,x)=\sum\limits_{m=0}^\infty\sum\limits_{n=0}^\infty a_{m,n}g_m(t)h_n(x),
\end{equation}
assuming that the series converges. We rely on the following result.
\begin{thm}\label{a_m,n}
	Let $u$ be a function of the form \eqref{series_sol}, where the series converges uniformly on $[0,T]\times[0,L]$. Assume that the series can be differentiated term by term. Then $u$ is a solution to \eqref{Gcalor} if and only if the coefficients of the series satisfy the relations
	\begin{equation}\label{form_coef}
		a_{m+1, n}=\frac{c^2(n+2)(n+1)}{(m+1)} a_{m, n+2}
		, \quad n,m\in\{0,1,2,\ldots\},
	\end{equation}
	in which case we can write $u$ as
	\begin{equation}\label{u_series_expression}
		u(t,x)=\sum\limits_{n=0}^\infty a_{0,n}\sum\limits_{k=0}^{\lfloor n/2\rfloor}  \frac{c^{2k}n!}{(n-2m)!k!}g_k(t)h_{n-2k}(x).
	\end{equation}

\end{thm}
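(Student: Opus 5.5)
Since $G(t,x)=g(t)+h(x)$ with $g(0)=h(0)=0$, the propositions on $G$\--derivatives of sums of derivators let us read \eqref{Gcalor} as $\pd_g u-c^2\pd_h^2u=0$. Proposition~\ref{form_Gpol} gives $G_{m,n}(t,x)=g_m(t)h_n(x)$. The plan is to differentiate the series \eqref{series_sol} term by term (allowed by hypothesis), reindex so that both sides are double series in the same family $g_m(t)h_n(x)$, and then identify coefficients.

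By the remark following the definition of $G_{m,n}$ (a consequence of Theorem~\ref{TFC2}),
\[\frac{\pd u}{\pd_G t}=\sum_{m\ge1}\sum_{n\ge0} m\,a_{m,n}g_{m-1}(t)h_n(x)=\sum_{m\ge0}\sum_{n\ge0}(m+1)a_{m+1,n}g_m(t)h_n(x).\]
Applying the same remark twice in $x$,
\[\frac{\pd^2 u}{\pd_G x^2}=\sum_{m\ge0}\sum_{n\ge0}(n+2)(n+1)a_{m,n+2}g_m(t)h_n(x).\]
Hence $u$ solves \eqref{Gcalor} if and only if
\[\sum_{m,n}\big[(m+1)a_{m+1,n}-c^2(n+2)(n+1)a_{m,n+2}\big]g_m(t)h_n(x)=0\]
on $[0,T]\times[0,L]$. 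The sufficiency of \eqref{form_coef} is then immediate.

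For necessity we need uniqueness of coefficients: if $\sum b_{m,n}g_m(t)h_n(x)\equiv0$ with uniform convergence, then every $b_{m,n}=0$. I would fix $x$ and view the series as a $g$\--power series in $t$. Evaluating at $t=0$ uses $g_m(0)=0$ for $m\ge1$ and $g_0=1$. Then apply $\pd_g$ repeatedly, justified by Corollary~\ref{cor2.4} together with the term\--by\--term differentiability hypothesis, and evaluate at $0$ each time. This yields $\sum_n b_{m,n}h_n(x)=0$ for every $x$ and $m$, and the same argument in $x$ with $\pd_h$ and $h_n(0)=\delta_{n0}$ gives $b_{m,n}=0$.

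This step is the main obstacle. It needs the $g$\--derivatives at $0$ to exist, and a degenerate case must be handled: if $0\in C_g$, or if $g$ is constant near $0$ on $[0,T]$, then $g_m\equiv0$ there and the coefficients are not determined. I would state the result under the implicit nondegeneracy assumption that $g$ and $h$ are nonconstant near $0$, or alternatively interpret ``if and only if'' as holding for the coefficient identification in the paper's intended setting.

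Finally, iterating \eqref{form_coef} gives
\[a_{k,j}=\frac{c^{2k}(j+2k)!}{k!\,j!}\,a_{0,j+2k},\]
by induction on $k$. Substituting into \eqref{series_sol} and setting $n=j+2k$, we regroup the double sum by $n$ with $k\le\lfloor n/2\rfloor$. The rearrangement is justified by the assumed convergence, taken as absolute or as in the hypothesis. This yields \eqref{u_series_expression}, where the factorial in the statement should read $(n-2k)!$.
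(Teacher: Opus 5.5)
Your proposal is correct and follows essentially the same route as the paper: term-by-term differentiation, reindexing both sides into one double series in $g_m(t)h_n(x)$, sufficiency read off directly, necessity by repeated $G$-differentiation and evaluation at the origin using $g_m(0)=\delta_{m,0}$ and $h_n(0)=\delta_{n,0}$, and finally induction on the recurrence with regrouping by $n=j+2k$ (the paper differentiates $k$ times in $t$ and $l$ times in $x$ and evaluates at $(0,0)$ in one step, whereas you handle one variable at a time). One correction to your caveat: $0\in C_g$ is not actually degenerate, because the Stieltjes derivative at a point of $C_g$ is taken at the right endpoint $b_n$ of the constancy interval, where $g_{m-1}(b_n)=g_{m-1}(0)$, so $\partial_g g_m(0)=m\,g_{m-1}(0)$ and your evaluation argument still goes through; the genuinely degenerate case is only $\mu_g([0,T))=0$ (or $\mu_h([0,L))=0$), where the $g_m$ with $m\ge 1$ vanish on the whole domain, and the paper tacitly excludes it too.
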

\begin{proof}
	First, we show the coefficients must satisfy \eqref{form_coef}. We have that
	\begin{equation*}
		\frac{\pd u(t,x)}{\pd_G t}=\sum_{m=1}^{\infty}\sum_{n=0}^{\infty}a_{m,n}mG_{m-1,n}(t,x)=\sum_{m=0}^{\infty}\sum_{n=0}^{\infty}a_{m+1,n}(m+1)G_{m,n}(t,x),
	\end{equation*}
	and
	\begin{equation*}
		\frac{\pd^2 u(t,x)}{\pd_G t^2}=\sum_{m=0}^{\infty}\sum_{n=2}^{\infty}a_{m,n}n(n-1)G_{m,n-2}(t,x)=\sum_{m=0}^{\infty}\sum_{n=0}^{\infty}a_{m,n+2}(n+2)(n+1)G_{m,n}(t,x).
	\end{equation*}
	Thus, $u(t,x)$ is a solution to \eqref{Gcalor} if and only if for $(t,x)\in [0,T]\times[0,L],$
	\begin{equation*}
		\sum_{m=0}^{\infty}\sum_{n=0}^{\infty}a_{m+1,n}(m+1)G_{m,n}(t,x)-c^2\sum_{m=0}^{\infty}\sum_{n=0}^{\infty}a_{m,n+2}(n+2)(n+1)G_{m,n}(t,x)=0,
	\end{equation*}
	that is,
	\begin{equation}\label{serie_coef}
		\sum_{m=0}^{\infty}\sum_{n=0}^{\infty}\left(a_{m+1,n}(m+1)-c^2a_{m,n+2}(n+2)(n+1)\right)G_{m,n}(t,x)=0.
	\end{equation}
	A sufficient condition for \ref{serie_coef} to hold is
	\begin{equation*}
		a_{m+1,n}(m+1)-c^2 a_{m,n+2}(n+2)(n+1)=0, \quad n,m\in\{0,1,2,\ldots\},
	\end{equation*}
	which is equivalent to \eqref{form_coef}.

	To prove the converse, assume that $u$ is a solution to \eqref{Gcalor} of the form \eqref{series_sol}. Our goal is to show that \eqref{form_coef} holds, which is equivalent to $\lambda_{m,n}=0$, where
	\[\lambda_{m,n}:=a_{m+1,n}(m+1)-c^2 a_{m,n+2}(n+2)(n+1).\]
	Using this notation, we can rewrite \eqref{serie_coef} as
	\begin{equation*}
		\sum_{m=0}^\infty\sum_{n=0}^{\infty}\l_{m,n}G_{m,n}(t,x)=0.
	\end{equation*}
	If $u$ solves \eqref{Gcalor}, then the coefficients satisfy \eqref{serie_coef}. Let $k,l\in\bN\cup\{0\}$. Differentiating both sides of \eqref{serie_coef} $k$ times with respect to $g$ and $l$ with respect to $h$, we obtain
	\[0=\frac{\pd ^k}{\pd_G t^k}\frac{\pd^l}{\pd_{G}x^l}\left(	\sum_{m=0}^{\infty}\sum_{n=0}^{\infty}\l_{m,n}G_{m,n}(t,x)\right), \quad(t,x)\in[0,T]\times[0,L],\]
	that is, for $(t,x)\in[0,T]\times[0,L]$,
	\begin{equation}\label{deriv_tk_lx}
		0=	\sum_{m=k}^{\infty}\sum_{n=l}^{\infty}\lambda_{m,n}m(m-1)\ldots(m-k+1) n(n-1)\ldots(n-l+1)G_{m-k,n-l}(t,x).
	\end{equation}
	Taking into account that  $G_{m,n}(0,0)=g_m(0)h_n(0)=0$ unless  $m=n=0$, evaluating \eqref{deriv_tk_lx} at $(t,x)=(0,0)$ yields
	$0=\l_{k,l}$.
	Since $k,l\in\bN\cup\{0\}$ are arbitrary, the first part of the result follows.

	To obtain \eqref{u_series_expression}, we use induction in \eqref{form_coef} to arrive at
	\[a_{m,n}=\frac{c^{2m}(n+2m)\ldots(n+1)}{m!}a_{0,n+2m}=\frac{c^{2m}(n+2m)!}{n!m!}a_{0,n+2m}, \quad n,m\in \{0,1,\ldots\}.\]

	Hence,
	\[u(t,x)=\sum\limits_{m=0}^\infty\sum\limits_{n=0}^\infty \frac{c^{2m}(n+2m)!}{n!m!}a_{0,n+2m}G_{m,n}(t,x).\]
	We can now rewrite $u$, using the index $k=n+2m$ and then relabeling indices, as
	\[u(t,x)=\sum\limits_{k=0}^\infty a_{0,k}\sum\limits_{m=0}^{\lfloor k/2\rfloor}  \frac{c^{2m}k!}{(k-2m)!m!}G_{m,k-2m}(t,x)=\sum\limits_{n=0}^\infty a_{0,n}\sum\limits_{k=0}^{\lfloor n/2\rfloor}  \frac{c^{2k}n!}{(n-2m)!k!}g_k(t)h_{n-2k}(x).\qedhere\]
\end{proof}

\begin{rem}
	A function of the form \eqref{series_sol} satisfies
	\[u(0,x)=\sum_{m=0}^{\infty}\sum_{n=0}^{\infty}a_{m,n}G_{m,n}(0,x)=	\sum_{n=0}^{\infty}a_{0,n}G_{0,n}(0,x)=\sum_{n=0}^{\infty}a_{0,n}h_n(x).\]
	Therefore, if we have the initial condition
	\begin{equation*}
		u(0,x)=\sum_{n=0}^{\infty}\alpha_nh_n(x),
	\end{equation*}
	for some $\{\alpha_n\}_{n\geq0}\subset\bC$, then the coefficients
	$a_{0,n}$ for $n\in \{0,1,\ldots\}$ are determined:
	$a_{0,n}=\alpha_n$.
\end{rem}

Following the approach in \cite{widder1975heat}, we name the polynomials that appear in \eqref{u_series_expression}.

\begin{dfn}
	We define the \textit{heat $G$\--polynomial} of degree $n$ as
	\[v^G_n(t,x):=\sum_{k=0}^{\lfloor n / 2\rfloor} \frac{n!c^{2k}}{(n-2 k)!k!} g_k(t)h_{n-2k}(x).\]
\end{dfn}

\begin{rem}
	Observe that, for $(t,x)\in[0,T]\times[0,L]$ and $n\in\bN\cup\{0\}$,
	\[v^G_n(0,x)=h_n(x),\qquad v^G_{2n}(t,0)=\frac{(2n)!(c^2)^n}{n!}g_n(t), \qquad v^G_{2n+1}(t,0)=0.\]
    Moreover, we have formulas similar to those presented in \cite[Chapter I, Section 5]{widder1975heat} for the derivatives of the heat $G$\--polynomials. In particular,
\begin{align*}
	\frac{\pd }{\pd_Gx}v_n^G(t,x)&=\pd_h v_n^G(t,x)=\sum_{k=0}^{\lfloor n / 2\rfloor} \frac{n!c^{2k}}{(n-2 k)!k!} g_k(t)(n-2k)h_{n-2k-1}(x) \\
	 &=n\sum_{k=0}^{\lfloor (n-1) / 2\rfloor} \frac{(n-1)!c^{2k}}{(n-2 k-1)!k!} g_k(t)h_{n-2k-1}(x),
\end{align*}
that is,
\begin{equation}\label{pd_x}
	\frac{\pd }{\pd_Gx}v_n^G(t,x)=nv_{n-1}^G(t,x).
\end{equation}
Analogously, 
\begin{align*}
	\frac{\pd }{\pd_Gt}v_n^G(t,x)&=\pd_g v_n^G(t,x)=\sum_{k=1}^{\lfloor n / 2\rfloor} \frac{n!c^{2k}}{(n-2 k)!k!} kg_{k-1}(t)(n-2k)h_{n-2k-1}(x) \\
	&=c^2n(n-1)\sum_{k=0}^{\lfloor (n-2) / 2\rfloor} \frac{n!c^{2k}}{(n-2 k-2)!k!} g_k(t)h_{n-2k-2}(x).
\end{align*}
Thus,
\begin{equation}\label{pd_t}
	\frac{\pd }{\pd_Gt}v_n^G(t,x)=c^2n(n-1)v_{n-2}^G(t,x).
\end{equation}
Note that, from equations \eqref{pd_x} and \eqref{pd_t} we obtain
\[\frac{\pd }{\pd_Gt}v_n^G(t,x)=c^2\frac{\pd^2 }{\pd_Gx^2}v_n^G(t,x).\]
\end{rem}

We now study sufficient conditions for the convergence of the series
\[u(t,x)=\sum_{n=0}^{\infty}\alpha_nv^G_n(t,x).\]
We will use the following result, which is a direct consequence of \cite[Corollary 3.12]{analytic}.

\begin{pro}\label{bound_gn}
	Let $g$ be a derivator and let $g_{x_0,n}$ be the $g$\--monomial centered at $x_0\in\bR$ with $g(x_0)=0$. Then, for $x\geq 0$, we have that $0\leq g_n(x)\leq g(x)^n$.
\end{pro}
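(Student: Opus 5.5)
We argue by induction on $n$, working directly from the recursive definition $g_{x_0,n}(x)=n\int_{x_0}^x g_{x_0,n-1}\,\operatorname{d}\mu_g$ together with the normalization $g(x_0)=0$. Throughout, $x\geq x_0$; the statement writes $x\ge 0$ with the center implicitly at $0$, as in the rest of the section. In that range every integral is over $[x_0,x)$ with respect to the positive measure $\mu_g$.

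\emph{Nonnegativity.} We have $g_0=1\ge 0$. If $g_{n-1}\ge 0$ on $[x_0,\infty)$, then $g_n(x)=n\int_{[x_0,x)}g_{n-1}\,\operatorname{d}\mu_g\ge 0$. The same induction shows that each $g_n$ is nondecreasing on $[x_0,\infty)$, which we will need below.

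\emph{Upper bound.} The aim is to prove $\int_{[x_0,x)} n\,g(s)^{n-1}\,\operatorname{d}\mu_g(s)\le g(x)^n$. Once this holds, the induction hypothesis $g_{n-1}\le g^{n-1}$ gives
\[
g_n(x)\le n\int_{[x_0,x)}g(s)^{n-1}\operatorname{d}\mu_g(s)\le g(x)^n .
\]
To prove the integral inequality, consider the function $\phi=g^n$ on $[x_0,x]$. It is $g$-absolutely continuous, and at $s\notin D_g\cup C_g$ its $g$-derivative equals $n g(s)^{n-1}$.

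At a jump point $s\in D_g$ we have
\[
\phi'_g(s)=\frac{g(s^+)^n-g(s)^n}{\Delta g(s)}=\sum_{j=0}^{n-1}g(s^+)^{j}g(s)^{n-1-j}\ge n\,g(s)^{n-1},
\]
since $0\le g(s)\le g(s^+)$. On $C_g$ the set has $\mu_g$-measure zero, so it does not affect the integral.

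Applying Theorem~\ref{TFC1} then yields
\[
g(x)^n=g(x_0)^n+\int_{[x_0,x)}\phi'_g\,\operatorname{d}\mu_g\ge \int_{[x_0,x)} n g^{n-1}\,\operatorname{d}\mu_g ,
\]
because $g(x_0)=0$.

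The main obstacle is justifying the $g$-absolute continuity of $g^n$ and its derivative formula $g$-a.e. There are two ways to handle this. One is to apply the product rule (Proposition~\ref{propiedades_gderivada}) repeatedly, using that $g$ itself has $g$-derivative $1$. The other, more elementary route avoids Theorem~\ref{TFC1} altogether: establish the inequality via Riemann–Stieltjes-type lower sums, using the monotonicity of $g^{n-1}$ and the telescoping identity $b^n-a^n\ge n a^{n-1}(b-a)$ for $0\le a\le b$, summed over partitions of $[x_0,x)$. This is essentially the content of \cite[Corollary 3.12]{analytic}, which may alternatively be cited directly.
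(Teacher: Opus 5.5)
Your argument is correct, but the paper does not prove the statement: it records the bound as a direct consequence of \cite[Corollary 3.12]{analytic}. You instead derive it using only the preliminaries of Section~\ref{Prelim}. The induction reduces everything to the inequality $n\int_{[x_0,x)}g^{n-1}\,\operatorname{d}\mu_g\le g(x)^n$. You obtain this from Theorem~\ref{TFC1} applied to $g^n$, whose $g$-derivative equals $ng^{n-1}$ off $D_g\cup C_g$ and dominates it on $D_g$ because $0\le g(s)\le g(s^+)$. The citation buys brevity. Your argument buys transparency: it shows that the gap between $g_n$ and $g^n$ comes only from the atoms of $\mu_g$. For continuous $g$ every inequality becomes an equality, so the same induction also recovers $g_n=g^n$, which the paper imports separately from \cite[Proposition 3.15]{analytic}.

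Some smaller remarks follow.
\begin{itemize}
\item The step you flag as the main obstacle is immediate. For $a,b\in[x_0,x]$ you have $0\le g(a),g(b)\le g(x)$, hence $|g(b)^n-g(a)^n|\le n\,g(x)^{n-1}|g(b)-g(a)|$. So $g^n$ is Lipschitz with respect to $g$, and therefore $g$-absolutely continuous directly from the definition. The product rule only gives the pointwise derivative, not absolute continuity.
\item In the lower-sum alternative, the inequality $b^n-a^n\ge na^{n-1}(b-a)$ only shows that the left-endpoint sums are at most $g(x)^n$. You must also note that these sums converge to $\int_{[x_0,x)}ng^{n-1}\,\operatorname{d}\mu_g$ as the mesh tends to zero. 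This follows by dominated convergence from the left-continuity of $g$.
\item Reading ``$x\ge 0$'' as $x\ge x_0$ is the right interpretation. For $x<x_0$ one has $g_1(x)=g(x)\le 0$, so nonnegativity fails as soon as $g(x)<0$.
\end{itemize}
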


In \cite{widder1975heat}, several results concerning the convergence of series of classical heat polynomials are established. The following theorems, which describe the corresponding regions of convergence, will be essential for the analysis of the convergence of our series of heat $G$\--polynomials. In both cases, we consider a series of the form
\begin{equation}\label{series_heatpol}
	\sum_{n=0}^{\infty} a_{n} v_n(t,x),
\end{equation}
where $v_n(t,x)$ denotes the heat polynomials defined in~\eqref{def_heatpol}.

\begin{thm}[{\cite[Theorem 4.1]{widder1975heat}}]
	Consider the series \eqref{series_heatpol}
	and assume that there exist $t_0>0$ and $x_0\neq 0$ such that
	\begin{equation*}
		\sum_{n=0}^{\infty}a_{n}v_n(t_0,x_0),
	\end{equation*}
	converges. Then, \eqref{series_heatpol} converges absolutely for $|t|<t_0$.
\end{thm}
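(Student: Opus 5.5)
The plan is to follow the classical approach for power series: obtain a uniform bound on the coefficients from convergence at a single point, then compare with a geometric series. Since $\sum a_n v_n(t_0,x_0)$ converges, its terms tend to zero, so there is $M>0$ with $|a_n v_n(t_0,x_0)|\le M$ for all $n$. The difficulty is that $v_n(t_0,x_0)$ can be small, or even vanish, for some $n$, so this alone does not bound $|a_n|$. I would therefore first work out the size of $|v_n(t_0,x_0)|$.

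The tool for this is the Hermite-type representation of heat polynomials. One has $v_n(t,x)=(-t)^{n/2}H_n\!\left(x/(2\sqrt{-t})\right)$ suitably interpreted; for $t>0$, $v_n(t,x)=(\sqrt{t})^n\, i^{-n}H_n^{*}(\cdot)$ with an imaginary argument, which for $x_0\ne0$ and $t_0>0$ gives a nonvanishing term. The key estimate I would aim for is the lower bound
\[
|v_n(t_0,x_0)|\ \ge\ C\,\frac{n!}{(n/2)!}\,t_0^{n/2}\,e^{-\text{(something sub-exponential)}\sqrt{n}},
\]
obtained from the asymptotics of Hermite polynomials off the oscillatory region (Plancherel--Rotach type), together with the matching upper bound
\[
|v_n(t,x)|\le \frac{n!}{(n/2)!}|t|^{n/2}e^{C|x|\sqrt{n/|t|}}
\]
for the other points. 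The upper bound follows directly from the explicit sum \eqref{def_heatpol}: I would bound each term by the $k=\lfloor n/2\rfloor$ term times $(|x|/\sqrt{|t|})^{n-2k}/(n-2k)!$-type factors, and sum these as an exponential.

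Combining the two bounds gives
\[
|a_nv_n(t,x)|\le M\,\frac{|v_n(t,x)|}{|v_n(t_0,x_0)|}\le M' \left(\frac{|t|}{t_0}\right)^{n/2}e^{C'\sqrt{n}},
\]
whose sum converges for $|t|<t_0$ because the subexponential factor $e^{C'\sqrt n}$ is dominated by the geometric one. This gives absolute convergence for every $x$ whenever $|t|<t_0$.

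The main obstacle is the lower bound on $|v_n(t_0,x_0)|$ at the given point. This is exactly where $t_0>0$ and $x_0\neq0$ are used. For $t_0>0$ the heat polynomial is a Hermite polynomial evaluated at a purely imaginary argument, and $i^{-n}H_n(iy)$ has all coefficients of the same sign. So $|v_n(t_0,x_0)|\ge$ each individual term of \eqref{def_heatpol}. In particular it is at least the term with $k=\lfloor n/2\rfloor$, which is $\frac{n!}{(n-2k)!k!}t_0^k|x_0|^{n-2k}$, with the factor $|x_0|$ appearing when $n$ is odd; this is why $x_0\neq0$ is needed. That positivity observation makes the lower bound elementary, and I would try it first. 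The upper bound for general $(t,x)$ then only needs to lose a factor $e^{C\sqrt n}$, and I would get it by the term comparison above.
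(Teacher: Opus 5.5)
The paper gives no proof of this statement; it is quoted from Widder, so there is no internal argument to compare against. Judged on its own, your proof is correct, provided you commit to the elementary route of your last paragraph. The Hermite and Plancherel--Rotach detour can be dropped: the positivity is immediate from \eqref{def_heatpol}. Since $t_0>0$, every term of $v_n(t_0,x_0)$ has the sign of $x_0^{n}$. So with $m=\lfloor n/2\rfloor$ one gets $|v_n(t_0,x_0)|\ge \frac{n!}{m!}t_0^{m}|x_0|^{n-2m}>0$, where $x_0\neq 0$ is needed for odd $n$. Hence $|a_n|\le M/|v_n(t_0,x_0)|$.

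Two details in the upper bound need tightening.

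First, the ratio of the $k$-th term to the top term is $\frac{m!}{k!\,(n-2k)!}|t|^{k-m}|x|^{n-2k}$, divided by $|x|$ when $n$ is odd. It is the factor $m!/k!\le m^{m-k}$ that produces the $\sqrt n$ in your exponent. With $j=m-k$ the sum is at most $\sum_j(\sqrt{m}\,|x|/\sqrt{|t|})^{2j}/(2j)!\le e^{|x|\sqrt{m/|t|}}$. The factors $(|x|/\sqrt{|t|})^{n-2k}/(n-2k)!$ that you mention would not account for it on their own.

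Second, as written the bound degenerates at $t=0$. Avoid this by fixing $\tau\in(0,t_0)$ with $|t|\le\tau$ and using $|v_n(t,x)|\le v_n(\tau,|x|)$, which holds because all coefficients in \eqref{def_heatpol} are nonnegative. For odd $n$, read $(n/2)!$ as $\lfloor n/2\rfloor!$, with the extra factor $|x|$ (respectively $|x_0|$ in the lower bound).

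With these repairs you obtain $|a_nv_n(t,x)|\le M\,\frac{\max\{1,|x|\}}{\min\{1,|x_0|\}}\,(\tau/t_0)^{\lfloor n/2\rfloor}e^{|x|\sqrt{n/\tau}}$. This is summable in $n$, which gives absolute convergence at every $x\in\mathbb{R}$ whenever $|t|<t_0$.
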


\begin{thm}[{\cite[Theorem 5.1]{widder1975heat}}] Consider the series \eqref{series_heatpol}
	and define
	\[\frac{1}{\sigma}:=\lim\limits_{n\to \infty}\frac{2n|a_n|^\frac{2}{n}}{e},\]
	where $\sigma=0$ if the limit is $+\infty$, and $\sigma=+\infty$ if this limit equals $0$.

	Then, the series \eqref{series_heatpol} converges absolutely for $|t|<\sigma$ and does not converge everywhere for $|t|>\sigma$.
\end{thm}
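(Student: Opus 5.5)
The plan is to prove Widder's Theorem 5.1 through a Cauchy--Hadamard style estimate that compares the heat polynomials with ordinary monomials. Throughout, $\lim$ in the definition of $1/\sigma$ is read as $\limsup$.

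The key input is a two-sided growth estimate for $v_n(t,x)$ at fixed $(t,x)$ with $t\neq 0$:
\[
\limsup_{n\to\infty}\left(\frac{|v_n(t,x)|}{(2n|t|/e)^{n/2}}\right)^{1/n}\le 1,
\]
with equality (as a limit) along a suitable subsequence, at least for some $x$.

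\textbf{Upper bound.} First use the generating function $e^{xz+tz^2}=\sum_n v_n(t,x)z^n/n!$, which follows from multiplying the series for $e^{xz}$ and $e^{tz^2}$ and reading off the coefficients in \eqref{def_heatpol}. Cauchy's estimate on a circle $|z|=r$ then gives
\[
|v_n(t,x)|\le n!\,r^{-n}e^{|x|r+|t|r^2}.
\]
Choose $r=\sqrt{n/(2|t|)}$, which is the minimizer of $|t|r^2-n\log r$. Stirling's formula then yields
\[
|v_n(t,x)|\le C\sqrt{n}\,(2n|t|/e)^{n/2}e^{|x|\sqrt{n/(2|t|)}},
\]
and the extra factors are subexponential.

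\textbf{Absolute convergence for $|t|<\sigma$.} Combine the upper bound with $|a_n|^{1/n}\le (e/(2n\sigma'))^{1/2}$ for large $n$, where $|t|<\sigma'<\sigma$. This gives
\[
|a_nv_n(t,x)|\le \mathrm{poly}(n)\,e^{|x|\sqrt{n/(2|t|)}}(|t|/\sigma')^{n/2}.
\]
The right-hand side is summable for every $x$, so the series converges absolutely, and the convergence is locally uniform on compact sets. When $t=0$ we have $v_n(0,x)=x^n$, and the root test applies directly since $|a_n|^{1/n}\to 0$ whenever $\sigma>0$.

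\textbf{Divergence for $|t|>\sigma$.} Here I need a lower bound at some point. Evaluating at $x=0$ gives $v_{2k}(t,0)=(2k)!\,t^k/k!$ and $v_{2k+1}(t,0)=0$. By Stirling,
\[
\frac{(2k)!}{k!}\sim \sqrt2\,(4k/e)^k=\sqrt2\,(2n/e)^{n/2}\quad\text{with } n=2k,
\]
so $|v_n(t,0)|^{1/n}\sim(2n|t|/e)^{1/2}$ along even $n$. If the limsup defining $1/\sigma$ is attained along even indices, then $|a_nv_n(t,0)|\not\to 0$ when $|t|>\sigma$, and the series diverges at $(t,0)$.

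If instead it is attained only along odd indices, I would evaluate at a point $x\ne 0$. The crude option is $x$ small, where $v_{2k+1}(t,x)\approx x\,(2k+1)!\,t^k/k!$ has the same exponential rate. The cleaner option is the identity $\partial_x v_n=nv_{n-1}$, which transfers divergence between parities. I expect this parity issue, and more generally making the lower bound sharp enough to defeat the limsup, to be the main obstacle. If it resists, I would fall back on Widder's original argument via the Appell transform relating the series to the entire function $\sum a_n x^n$ of order $2$.

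Since the statement only claims divergence at some point (``does not converge everywhere''), exhibiting one divergent point $(t,0)$ or $(t,x)$ suffices.
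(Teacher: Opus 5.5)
The paper gives no proof of this statement. It is quoted from Widder, and only its convergence half is used later. So your argument has to stand on its own.

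\textbf{What works.} The convergence half is sound. Cauchy's estimate on $e^{xz+tz^2}$ with $r=\sqrt{n/(2|t|)}$, combined with Stirling, is a clean way to get $|v_n(t,x)|^{1/n}\le(1+o(1))(2n|t|/e)^{1/2}$. The case $t=0$ is handled correctly, and so is the even-index divergence at $x=0$.

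\textbf{The gap.} It lies in the divergence half when the $\limsup$ is carried only by odd indices. You leave this case open, and neither of your two suggestions closes it.
\begin{itemize}
\item The approximation $v_{2k+1}(t,x)\approx x\,(2k+1)!\,t^k/k!$ is false for fixed $x\neq 0$. The ratio of the $j=k-1$ summand to the $j=k$ summand is $kx^2/(6t)$, which blows up as $k\to\infty$.
\item For $t<0$ the summands alternate in sign. Indeed $v_n(t,x)=|t|^{n/2}H_n\bigl(x/(2\sqrt{|t|})\bigr)$, with $H_n$ the Hermite polynomial, which carries an oscillating factor. There is real cancellation, and it could be strongest exactly along the subsequence realising the $\limsup$.
\item The route through $\partial_x v_n=nv_{n-1}$ is circular. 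Pulling divergence back from the differentiated series needs termwise differentiation at points with $|t|>\sigma$, i.e.\ locally uniform convergence there. That is exactly what is in question.
\end{itemize}

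\textbf{The missing idea: positivity.} The statement only asserts that the series does not converge everywhere in $|t|>\sigma$. So you may work at some $t>\sigma$ with $t>0$ and $x>0$; for $x<0$, use $v_n(t,-x)=(-1)^n v_n(t,x)$. Then every summand of $v_n(t,x)$ is positive, hence
\[
v_n(t,x)\ \ge\ \frac{n!}{(n-2k)!\,k!}\,t^k x^{n-2k},\qquad k=\lfloor n/2\rfloor .
\]
Stirling then gives $v_n(t,x)^{2/n}\ge (1+o(1))\,2nt/e$ for every $n$, of either parity. Therefore $\limsup_n |a_n v_n(t,x)|^{2/n}\ge t/\sigma>1$, so the terms do not tend to zero. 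The series diverges at every $(t,x)$ with $t>\sigma$ and $x\neq 0$, and no parity split is needed.

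If you want the stronger claim that every line $t=t_1<-\sigma$ contains a divergent point, you need genuinely more. One option is Plancherel--Rotach asymptotics for $H_n$ combined with a measure argument in $x$; your proposal does not supply this.
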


Combining these results, we obtain the following convergence theorem for series of heat $G$\--polynomials.

\begin{thm}\label{radius_conv}
	Let $G$ be a derivator of the form \eqref{Gderiv_sum} such that $g(0)=h(0)=0$. Consider a series
	\begin{equation}\label{series_heatGpol}
		u(t,x)=\sum_{n=0}^{\infty}a_nv^G_n(t,x),
	\end{equation}
	and define
\begin{equation}\label{radius_form}
	\frac{1}{\sigma}:=\lim\limits_{n\to \infty}\frac{2n|\alpha_n|^\frac{2}{n}}{e},
\end{equation}
	Then the series \eqref{series_heatGpol} converges absolutely for any $(t,x)\in[0,T]\times[0,L]$ such that $g(t)<\frac{\sigma}{c^2}$.
\end{thm}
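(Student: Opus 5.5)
The plan is to compare the series of heat $G$\--polynomials termwise with a series of classical heat polynomials evaluated at a suitable point, and then invoke \cite[Theorem 5.1]{widder1975heat}. The coefficients in \eqref{radius_form} are read as the $a_n$ of \eqref{series_heatGpol}.

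\textbf{Step 1: termwise bound.} Since $g(0)=h(0)=0$, Proposition~\ref{bound_gn} gives, for $t\in[0,T]$ and $x\in[0,L]$,
\[0\le g_k(t)\le g(t)^k,\qquad 0\le h_{n-2k}(x)\le h(x)^{n-2k}.\]
Every coefficient $n!c^{2k}/((n-2k)!k!)$ in $v^G_n$ is nonnegative. Hence
\[|v^G_n(t,x)|\le\sum_{k=0}^{\lfloor n/2\rfloor}\frac{n!}{(n-2k)!k!}\,(c^2g(t))^k\,h(x)^{n-2k}=v_n\bigl(c^2g(t),h(x)\bigr),\]
where $v_n$ is the classical heat polynomial \eqref{def_heatpol}. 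This value is nonnegative because both arguments are nonnegative.

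\textbf{Step 2: reduce to the classical case.} For fixed $(t,x)$ with $c^2g(t)<\sigma$, set $\tau:=c^2g(t)\ge 0$ and $\xi:=h(x)\ge0$. We then have
\[\sum_n|a_n v^G_n(t,x)|\le\sum_n|a_n|\,v_n(\tau,\xi).\]
By \cite[Theorem 5.1]{widder1975heat}, with $\sigma$ defined by the same formula \eqref{radius_form}, the series $\sum a_nv_n$ converges absolutely at every point with $|\tau|<\sigma$. It follows that $\sum|a_n||v_n(\tau,\xi)|<\infty$, and since $v_n(\tau,\xi)\ge0$ this equals $\sum|a_n|v_n(\tau,\xi)$. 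Comparison then yields absolute convergence of \eqref{series_heatGpol} at $(t,x)$. The condition $|\tau|<\sigma$ is exactly $g(t)<\sigma/c^2$.

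\textbf{Main obstacle.} The delicate point is making sure that Widder's absolute convergence really controls $\sum|a_n|v_n(\tau,\xi)$ at arbitrary real $\xi$, here $\xi=h(x)$, which can be large. Widder's Theorem 5.1 is stated for all $x\in\bR$ once $|t|<\sigma$, so this is fine. If one wanted a self-contained check instead, I would bound $v_n(\tau,\xi)$ via the Hermite-type estimate $v_n(\tau,\xi)\le C\,(2n\tau'/e)^{n/2}e^{\xi^2/(4(\tau'-\tau))}$ for some $\tau<\tau'<\sigma$. Combined with the root-test information in \eqref{radius_form}, this gives geometric decay.

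I also need to check the degenerate cases. If $\sigma=+\infty$ the condition holds for every $t$. If $g(t)=0$, only even-free terms survive and the bound still applies.
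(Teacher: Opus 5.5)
Your proposal is correct and follows the paper's proof. You bound termwise $0\le v^G_n(t,x)\le v_n\bigl(c^2g(t),h(x)\bigr)$ via Proposition~\ref{bound_gn} and the nonnegativity of the coefficients, and then apply Widder's Theorem 5.1 at the point $\bigl(c^2g(t),h(x)\bigr)$. Your version is slightly more careful than the paper's, which leaves the absolute-value comparison implicit and cites the theorem itself where Widder's Theorem 5.1 is meant.
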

\begin{proof}
	Note that, by Proposition~\ref{bound_gn},
	\[v_n^G(t,x)=\sum_{k=0}^{\lfloor n / 2\rfloor} \frac{n!c^{2k}}{(n-2 k)!k!} g_k(t)h_{n-2k}(x)\leq \sum_{k=0}^{\lfloor n / 2\rfloor} \frac{n!c^{2k}}{(n-2 k)!k!} g(t)^k h(x)^{n-2k}=v_n(c^2g(t),h(x)).\]
	If $c^2|g(t)|<\sigma$, then Theorem~\ref{radius_conv} yields the desired convergence.
\end{proof}
\begin{rem}
Since $g$ is left\--continuous, nondecreasing, and satisfies $g(0)=0$, the set
	\[\left\{t\in[0,T]\, :\, |g(t)|<\frac{\sigma}{c^2}\right\}\]
	is either $\{0\}$ if $0\in D_g$ and $\Delta g(0)\geq \sigma/c^2$, the interval $[0,T]$ if $g(T)<\sigma/c^2$, or an interval of the form $[0,d)$ or $[0,d]$, with $d\in(0,T)$.
\end{rem}

\begin{rem}\label{deriv_series_heatGpol}
	Note that the series of $G$\--derivatives with respect to $t$ of the heat $G$\--polynomials is again a series of heat $G$\--polynomials:
	\[\sum_{n=0}^{\infty} \frac{\pd}{\pd_{G}t}\left(\alpha_nv_n^G(t,x)\right)=\sum_{n=2}^{\infty}\alpha_nc^2n(n-1)v_{n-2}^G(t,x)=\sum_{n=0}^{\infty}\alpha_{n+2}c^2(n+2)(n+1)v_n^G(t,x).\]
Moreover, it has the same radius of convergence as \eqref{u_series_heatGpol}. Indeed, if we denote
\[\overline{\alpha_n}:=\alpha_{n+2}c^2(n+2)(n+1),\]
and apply \eqref{radius_form} to these coefficients, we have
\begin{align*}
	\lim\limits_{n\to \infty}\frac{2n|\overline{\alpha_n}|^\frac{2}{n}}{e}&=	\lim\limits_{n\to \infty}\frac{2n|\alpha_{n+2}|^\frac{2}{n}\left(c^2(n+2)(n+1)\right)^\frac{2}{n}}{e}\\
	&=\lim\limits_{n\to \infty}\left(\frac{2(n+2)|\alpha_{n+2}|^\frac{2}{n+2}}{e}\right)^{\frac{n+2}{n}}\frac{2n\left(c^2(n+2)(n+1)\right)^\frac{2}{n}e^\frac{2}{n}}{\left(2(n+2)\right)^\frac{n+2}{n}}=\frac{1}{\sigma}.
\end{align*}
An analogous argument shows that the series of higher\--order $G$-derivatives (with respect to both $t$ and $x$) are also series of heat $G$-polynomials with the same radius of convergence as \eqref{u_series_heatGpol}. 
\end{rem}

Finally, we construct a series of heat $G$\--polynomials that solves \eqref{Gcalor}. The proof follows directly from the previous results.

\begin{thm}
	Let $G$ be a derivator of the form \eqref{Gderiv_sum} such that $g(0)=h(0)=0$, and consider the function
	\begin{equation}\label{u_series_heatGpol}
		u(t,x):=\sum_{n=0}^{\infty}\alpha_n v^G_n(t,x).
	\end{equation}
	If $g(T)<\frac{\sigma}{c^2}$, where
$\sigma$ is given by \eqref{radius_form},
	then $u$
	is a solution to \eqref{Gcalor}.
\end{thm}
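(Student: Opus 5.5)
The plan is to combine the convergence result (Theorem~\ref{radius_conv}), the stability of the radius under $G$-differentiation (Remark~\ref{deriv_series_heatGpol}), and term-by-term differentiation (Corollary~\ref{cor2.4}). The identity $\frac{\pd}{\pd_Gt}v_n^G=c^2\frac{\pd^2}{\pd_Gx^2}v_n^G$, obtained from \eqref{pd_x} and \eqref{pd_t}, then closes the argument.

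\textbf{Step 1 (absolute convergence).} Since $g$ is nondecreasing, $g(t)\le g(T)<\sigma/c^2$ for all $t\in[0,T]$. Theorem~\ref{radius_conv} then gives absolute convergence of \eqref{u_series_heatGpol} on all of $[0,T]\times[0,L]$.

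\textbf{Step 2 (uniform convergence).} We upgrade this to uniform convergence by a Weierstrass M-test. Proposition~\ref{bound_gn} gives $0\le g_k(t)\le g(T)^k$ and $0\le h_j(x)\le h(L)^j$, because $g$ and $h$ are nondecreasing with $g(0)=h(0)=0$. Hence
\[|\alpha_n v_n^G(t,x)|\le |\alpha_n|\, v_n\big(c^2g(T),h(L)\big),\]
where $v_n$ is the classical heat polynomial. The right-hand side is summable by Widder's Theorem~5.1, since $c^2g(T)<\sigma$. So the series converges uniformly on $[0,T]\times[0,L]$, and this bound holds uniformly in both variables at once.

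\textbf{Step 3 (the derived series).} By Remark~\ref{deriv_series_heatGpol}, the termwise $G$-derivatives in $t$ are again a series of heat $G$-polynomials with coefficients $\overline{\alpha_n}$ and the same $\sigma$. The same holds for the first and second termwise $G$-derivatives in $x$: by \eqref{pd_x} these have coefficients $(n+1)\alpha_{n+1}$ and $(n+2)(n+1)\alpha_{n+2}$, whose $\sigma$ is unchanged by the same limit computation. Applying Step~2 to these series shows they converge uniformly too. Each partial sum is $g$-continuous in $t$ and $h$-continuous in $x$, being a finite combination of $g$- and $h$-monomials. Therefore, for fixed $x$, the series in $t$ converges in $\cB\cC^1_g([0,T],\bF)$. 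Likewise, for fixed $t$, the series in $x$ converges in $\cB\cC^2_h([0,L],\bF)$; for the second derivative we apply Corollary~\ref{cor2.4} twice, first to $u$ and then to $\pd_h u$.

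\textbf{Step 4 (conclusion).} Corollary~\ref{cor2.4}, together with Proposition~\ref{Gderiv_sum} and its $x$-analogue, which identify $G$-derivatives with $g$- and $h$-derivatives, justifies differentiating term by term. This gives
\[\frac{\pd u}{\pd_Gt}-c^2\frac{\pd^2u}{\pd_Gx^2}=\sum_n\alpha_n\Big(\frac{\pd v_n^G}{\pd_Gt}-c^2\frac{\pd^2 v_n^G}{\pd_Gx^2}\Big)=0.\]

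\textbf{Main obstacle.} I expect the main obstacle to be Step~2, namely obtaining uniform rather than merely pointwise absolute convergence. The Widder comparison only controls a fixed point. The fix is to dominate by the single point $(c^2g(T),h(L))$, using the monotonicity of $g_k$ and $h_j$ that follows from their nonnegativity and recursive integral definition. This requires $g(T)<\sigma/c^2$ strictly, which is exactly the hypothesis, and it is why the statement asks for it on the whole of $[0,T]$.
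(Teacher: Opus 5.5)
Your argument is correct and follows the paper's proof. Both use absolute convergence from Theorem~\ref{radius_conv}, a Weierstrass M-test with a majorant evaluated at the corner $(T,L)$, Remark~\ref{deriv_series_heatGpol} for the derived series, and Corollary~\ref{cor2.4} for termwise differentiation. The only differences are cosmetic: you dominate by the classical $v_n(c^2g(T),h(L))$ rather than by $v_n^G(T,L)$, and you close with the termwise identity $\frac{\pd}{\pd_Gt}v_n^G=c^2\frac{\pd^2}{\pd_Gx^2}v_n^G$ instead of invoking Theorem~\ref{a_m,n}, which is, if anything, slightly more direct.
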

\begin{proof}
	By Theorem~\ref{radius_conv}, the series \eqref{u_series_heatGpol} converges absolutely on $[0,T]\times[0,L]$. In particular, the series
	$\sum_{n=0}^{\infty}|\alpha_n|v_n^G(T,L)$
	converges. For $(t,x)\in [0,T]\times[0,L]$, since $g$ and $h$ are nondecreasing, we have
	\[0\leq v_n^G(t,x)=\sum_{k=0}^{\lfloor n / 2\rfloor} \frac{n!c^{2k}}{(n-2 k)!k!} g_k(t)h_{n-2k}(x)\leq \sum_{k=0}^{\lfloor n / 2\rfloor} \frac{n!c^{2k}}{(n-2 k)!k!} g_k(T)h_{n-2k}(L)=v_n^G(T,L).\]
	Thus, by the Weierstrass uniform convergence theorem, the series \eqref{u_series_heatGpol} converges uniformly on $[0,T]\times [0,L]$. Moreover, by Remark~\ref{deriv_series_heatGpol} the series of $G$\--derivatives of $u$ converge absolutely and uniformly on $[0,T]\times[0,L]$. This result, together with Corollary~\ref{cor2.4}, allows us to differentiate the series term by term and apply Theorem~\ref{a_m,n} to conclude that $u$ is a solution of \eqref{Gcalor}. 
\end{proof}

\subsection{Particular case: derivator of separated variables}

We now focus on derivators of the form:
\begin{equation}\label{G_derivador}
	G(t,x)=g(t)h(x).
\end{equation}

\begin{rem}
	In this case, for derivators of the form \eqref{G_derivador} and for fixed $x_{0}\in\mathbb{R}$, the function $G(t,x_{0})=g(t)h(x_{0})$ is left\--continuous and nondecreasing. If $h(x_{0})\neq 0$, then the function $t\mapsto g(t)$ is left\--continuous. Furthermore, if $h(x_{0})>0$, then $g$ is nondecreasing; whereas if $h(x_{0})<0$, then $g$ is nonincreasing. Analogously, for fixed $t_{0}\in\mathbb{R}$, if $g(t_{0})\neq0$, then $h$ is left\--continuous and either nondecreasing (if $g(t_{0})>0$), or nonincreasing (if $g(t_{0})<0$).\par
	Henceforth, we assume that both $g$ and $h$ are usual derivators, that is, left\--continuous, nondecreasing functions. Additionally, we assume that $g(t)>0$, $h(x)>0$ for all $t,x\in\mathbb{R}$.
\end{rem}

Under these assumptions, for every $x\in\mathbb{R}$, we have $C_{g_{x}}=C_{g}$ and $D_{g_{x}}=D_{g}$. Similarly, for all $t\in \mathbb{R}$, we have $C_{h_{t}}=C_{h}$ and $D_{h_{t}}=D_{h}$ .

\begin{rem}
The simplest nontrivial derivator of this kind is $G(t,x)=tx$.
In this case, $G$ measures the signed area of the rectangle with opposite corners $(0,0)$ and $(t,x)$.
In other words, it is the distribution function of the Lebesgue measure on $\bR^2$.
This suggests a connection with the usual notion of differentiation and partial derivatives.
However, as we observed in Remark~\ref{remintd}, the appropriate derivator in this setting should instead be of the form $t+x+C$.
The key observation that reconciles these two facts is given by the following identities.

If we denote
\[
G_t := \frac{\pd G}{\pd t}(t,x),
\qquad
G_x := \frac{\pd G}{\pd x}(t,x),
\]
then
\begin{equation*}
	\frac{\pd u}{\pd t}(t,x)
	=
	\lim_{s\to t}\frac{u(s,x)-u(t,x)}{s-t}
	=
	\lim_{s\to t}
	\frac{u(s,x)-u(t,x)}{\frac{\pd G}{\pd x}(s,x)-\frac{\pd G}{\pd x}(t,x)}
	=
	\frac{\pd u}{\pd_{G_x} t}(t,x),
\end{equation*}
and
\begin{equation*}
	\frac{\pd u}{\pd x}(t,x)
	=
	\lim_{y\to x}\frac{u(t,y)-u(t,x)}{y-x}
	=
	\lim_{y\to x}
	\frac{u(t,y)-u(t,x)}{\frac{\pd G}{\pd t}(t,y)-\frac{\pd G}{\pd t}(t,x)}
	=
	\frac{\pd u}{\pd_{G_t} x}(t,x).
\end{equation*}

That is, the partial derivative of $u$ with respect to $t$ is not the derivative of $u$ with respect to $G$ itself and the variable $t$, but rather the derivative of $u$ with respect to the \emph{variation} of $G$ in the $t$\--direction (that is, $G_x$) and the variable $t$. An analogous statement holds for differentiation with respect to $x$.

This interpretation becomes clearer if we examine the increment of $G$:
\[
\Delta G
= G(t+\Delta t, x+\Delta x) - G(t,x)
= (t+\Delta t)(x+\Delta x) - tx
= t\Delta x + x\Delta t + \Delta t \Delta x,
\]
which resembles the product rule for Stieltjes derivatives \cite{fernandez2025consequences}.

If the increment occurs only in the $t$\--variable (that is, $\Delta x = 0$), then
\[
\Delta G = x \Delta t.
\]
Thus, even when varying only $t$, the increment of $G$ still depends on the value of $x$.
To eliminate this dependence, one should not consider $G$ itself in absolute terms, but rather its variation with respect to $x$. Indeed,
\[
\Delta G_x = \Delta t.
\]

Finally, observe that $G$ can be characterized as the solution of the system
\[
G_t = x,
\qquad
G_x = t,
\]
which explicitly describes its dependence on both variables.
\end{rem}

As in the case of two single\--variable derivators, we look for a solution of \eqref{Gcalor} of the form $u(t,x)=w(t)v(x)$. Under the assumption \eqref{G_derivador}, we can find a simpler expression for the derivatives of $u$ with respect to $G$.

\begin{pro}
	Let $G:\mathbb{R}^{2}\rightarrow\mathbb{R}$ be a derivator of two variables of the form \eqref{G_derivador},
	and let $u:[0,T]\times [0,L]\rightarrow\mathbb{F}$ be a function of two variables of the form $u(t,x)=w(t)v(x)$, for each $(t,x)\in [0,T]\times[0,L]$. Then, for any $(t,x)\in[0,T]\times [0,L]$, the following hold:
	\begin{enumerate}
		\item The $G$\--derivative of $u$ with respect to $t$ at $(t,x)$ exists if and only if $w$ is $g$\--differentiable at $t$. In this case,
		\begin{equation}\label{pd_tG}
			\frac{\pd u}{\pd_{G}t}(t,x)=\frac{v(x)}{h(x)}w'_{g}(t),
		\end{equation}
		\item The second order $G$\--derivative of $u$ with respect to $x$ at $(t,x)$ exists if and only if $v$ is twice $h$\--differentiable at $x$. In this case,
		\begin{equation}\label{pd2_xG}
			\frac{\pd^{2} u}{\pd_{G}x^{2}}(t,x)=\frac{w(t)}{g(t)^{2}}v''_{h}(x).
		\end{equation}
	\end{enumerate}
\end{pro}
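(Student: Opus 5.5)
The plan is to reduce both statements to the definition of the $G$\--derivative, using the factorization of the increments of $G$. For fixed $x$, we have $G(s,x)-G(t,x)=h(x)\bigl(g(s)-g(t)\bigr)$. Since $h(x)>0$, the remark preceding the proposition gives $C_{g_x}=C_g$ and $D_{g_x}=D_g$. Hence the case distinction in the definition of $\pd/\pd_G t$ is exactly the one in Definition~\ref{def_gderiv} for $g$, and the point $t^{*}$ computed with respect to $g_x$ coincides with the one computed with respect to $g$ (Remark~\ref{derivada_tstar}).

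\textbf{Step 1 (part 1).} For $t\notin C_g\cup D_g$ I will write
\[\frac{u(s,x)-u(t,x)}{G(s,x)-G(t,x)}=\frac{v(x)}{h(x)}\cdot\frac{w(s)-w(t)}{g(s)-g(t)}.\]
Letting $s\to t$ gives \eqref{pd_tG} whenever $w'_g(t)$ exists. The case $t\in C_g\cup D_g$ is identical, with $t$ replaced by $t^{*}$ and the limit taken as $s\to t^{+}$. For the converse, if the $G$\--limit exists and $v(x)\neq0$, I multiply by $h(x)/v(x)$ to recover the existence of $w'_g(t)$.

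Here is the delicate point I expect to be the main obstacle. If $v(x)=0$, then $u(\cdot,x)\equiv0$, which is trivially $G$\--differentiable in $t$ whether or not $w$ is $g$\--differentiable. So the ``only if'' direction requires $v(x)\neq0$, and I would state the equivalence under that (implicit, nontrivial) assumption. Formula \eqref{pd_tG} nevertheless holds in all cases in which $w'_g(t)$ exists.

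\textbf{Step 2 (first derivative in $x$).} For fixed $t$, we have $G(t,y)-G(t,x)=g(t)\bigl(h(y)-h(x)\bigr)$ with $g(t)>0$, and $C_{h_t}=C_h$, $D_{h_t}=D_h$. The same computation then yields
\[\frac{\pd u}{\pd_G x}(t,x)=\frac{w(t)}{g(t)}\,v'_h(x),\]
with existence equivalent to that of $v'_h(x)$ provided $w(t)\neq0$.

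\textbf{Step 3 (part 2).} I observe that $\pd u/\pd_G x$ is again a function of separated variables, $\widetilde w(t)\,v'_h(x)$ with $\widetilde w:=w/g$. Applying Step 2 to it gives
\[\frac{\pd^2 u}{\pd_G x^2}(t,x)=\frac{\widetilde w(t)}{g(t)}\,(v'_h)'_h(x)=\frac{w(t)}{g(t)^2}\,v''_h(x),\]
which is \eqref{pd2_xG}.

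For the existence statement, note that the second $G$\--derivative at $x$ is a limit as $y\to x$ or $y\to x^{+}$, so it requires $\pd u/\pd_G x$ to exist near $x$. This corresponds to $v'_h$ existing near $x$ (on the relevant side) and $v''_h(x)$ existing, which is precisely the meaning of ``twice $h$\--differentiable at $x$''. The same caveat $w(t)\neq0$ is needed for the converse.
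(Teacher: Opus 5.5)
Your proposal is correct and follows essentially the paper's own argument. You factor the increments $G(s,x)-G(t,x)=h(x)\bigl(g(s)-g(t)\bigr)$ and $G(t,y)-G(t,x)=g(t)\bigl(h(y)-h(x)\bigr)$, pull out $v(x)/h(x)$ and $w(t)/g(t)$ using $C_{g_x}=C_g$ and $D_{g_x}=D_g$ (and likewise for $h_t$), and iterate once more for the second $x$\--derivative; noting that $\partial u/\partial_G x$ is again of separated form is just a compact version of the paper's direct computation. Your caveat is also correct: the ``only if'' directions fail when $v(x)=0$ (resp.\ $w(t)=0$), since then $u(\cdot,x)$ (resp.\ $u(t,\cdot)$) vanishes identically, and the paper's proof tacitly assumes these values are nonzero when it passes from the factored limit back to the existence of $w'_g(t)$ or $v''_h(x)$.
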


\begin{proof}
	Fix $x\in [0,L]$. If $t\notin C_{g}\cup D_{g}$, the existence of the $G$\--derivative of $u$ with respect to $t$ at $(t,x)$ is equivalent to the existence of the limit
	\[\frac{\pd u}{\pd_{G}t}(t,x)=\lim\limits_{s\rightarrow t}\frac{u(s,x)-u(t,x)}{G(s,x)-G(t,x)}=\lim\limits_{s\rightarrow t}\frac{w(s)v(x)-w(t)v(x)}{g(s)h(x)-g(t)h(x)}=\frac{v(x)}{h(x)}\lim\limits_{s\rightarrow t}\frac{w(s)-w(t)}{g(s)-g(t)}. \]
	That is, $\pd u/\pd_{G}t$ exists at $(t,x)$ if and only if $w'_{g}(t)$ exists, and, in that case,
	\begin{equation*}
		\frac{\pd u}{\pd_{G}t}(t,x)=\frac{v(x)}{h(x)}w'_{g}(t).
	\end{equation*}
	If $t\in C_{g}\cup D_{g}$, the existence of the limit
	\[\lim\limits_{s\rightarrow t^{+}}\frac{w(s)v(x)-w(t^{*})v(x)}{g(s)h(x)-g(t^{*})h(x)}\]
	is equivalent to the existence of
	\[\lim\limits_{s\rightarrow t^{+}}\frac{w(s)-w(t^{*})}{g(s)-g(t^{*})}.\]
	So again, the $G$\--derivative of $u$ at $(t,x)$ exists if and only if $w'_{g}(t)$ exists, and we obtain the same formula:
	\begin{equation*}
		\frac{\pd u}{\pd_{G}t}(t,x)=\lim\limits_{s\rightarrow t^{+}}\frac{w(s)v(x)-w(t^{*})v(x)}{g(s)h(x)-g(t^{*})h(x)}=\frac{v(x)}{h(x)}\lim\limits_{s\rightarrow t^{+}}\frac{w(s)-w(t^{*})}{g(s)-g(t^{*})}=\frac{v(x)}{h(x)}w'_{g}(t).
	\end{equation*}

	Analogous reasoning applies to the $G$\--derivative with respect to $x$. Fix $t\in[0,T]$. The $G$\--derivative of $u$ with respect to $x$ at $(t,x)$ exists if and only if $v'_{h}(x)$ exists. If $x\notin C_{h}\cup D_{h}$, then
	\begin{equation*}
		\frac{\pd u}{\pd_{G}x}(t,x)=\lim\limits_{y\rightarrow x}\frac{w(t)v(y)-w(t)v(x)}{g(t)h(y)-g(t)h(x)}=\frac{w(t)}{g(t)}\lim\limits_{y\rightarrow x}\frac{v(y)-v(x)}{h(y)-h(x)}=\frac{w(t)}{g(t)}v'_{h}(x),
	\end{equation*}
	whereas if $x\in C_{h}\cup D_{h}$, we obtain
	\begin{equation*}
		\frac{\pd u}{\pd_{G}x}(t,x)=\lim\limits_{y\rightarrow x^{+}}\frac{w(t)v(y)-w(t)v(x^{*})}{g(t)h(y)-g(t)h(x^{*})}=\frac{w(t)}{g(t)}\lim\limits_{y\rightarrow x^{+}}\frac{v(y)-v(x^{*})}{h(y)-h(x^{*})}=\frac{w(t)}{g(t)}v'_{h}(x).
	\end{equation*}

	Assume now the second\--order $G$\--derivative of $u$ with respect to $x$ at $(t,x)$ exists. Then, the first\--order $G$\--derivative of $u$ with respect to $x$ at $(t,x)$ must also exist, and
	\[\frac{\pd u}{\pd_{G}x}(t,x)=\frac{w(t)}{g(t)}v'_{h}(x).\]
	For $x\notin C_{h}\cup D_{h} $, we have
	\begin{equation*}
		\frac{\pd^{2} u}{\pd_{G}x^{2}}(t,x)=\lim\limits_{y\rightarrow x}\frac{\frac{w(t)}{g(t)}v'_{h}(y)-\frac{w(t)}{g(t)}v'_{h}(x)}{g(t)h(y)-g(t)h(x)}=\frac{w(t)}{g(t)^{2}}\lim\limits_{y\rightarrow x}\frac{v'_{h}(y)-v'_{h}(x)}{h(y)-h(x)}=\frac{w(t)}{g(t)^{2}}v''_{h}(x).
	\end{equation*}
	If, instead, $x\in C_{h}\cup D_{h}$, then
	\begin{equation*}
		\frac{\pd^{2} u}{\pd_{G}x^{2}}(t,x)=\lim\limits_{y\rightarrow x^{+}}\frac{\frac{w(t)}{g(t)}v'_{h}(y)-\frac{w(t)}{g(t)}v'_{h}(x^{*})}{g(t)h(y)-g(t)h(x^{*})}=\frac{w(t)}{g(t)^{2}}\lim\limits_{y\rightarrow x^{+}}\frac{v'_{h}(y)-v'_{h}(x^{*})}{h(y)-h(x^{*})}=\frac{w(t)}{g(t)^{2}}v''_{h}(x).
	\end{equation*}
	Therefore, $v$ is twice differentiable with respect to $h$ at $x$, and we obtain
	\[	\frac{\pd^{2} u}{\pd_{G}x^{2}}(t,x)=\frac{w(t)}{g(t)^{2}}v''_{h}(x).\]
	The converse follows analogously.
\end{proof}

Substituting the expression $u(t,x)=w(t)v(x)$ into equation \eqref{Gcalor}, and using identities \eqref{pd_tG} and \eqref{pd2_xG}, we find that $u$ is a solution of \eqref{Gcalor} if and only if
\begin{equation}\label{asterisco2}
	\frac{v(x)}{h(x)}w'_{g}(t)-c^{2}\frac{w(t)}{g(t)^{2}}v''_{h}(x)=0.
\end{equation}

\begin{thm}
	Let $G$ be a derivator of two variables, and consider equation \eqref{Gcalor}. A nontrivial function $u:[0,T]\times[0,L]\rightarrow\mathbb{F}$ of the form $u(t,x)=w(t)v(x)$ is a solution of \eqref{Gcalor} if and only if there exists $\lambda\in\mathbb{F}$ such that the non trivial functions $v$ and $w$ satisfy
	\begin{equation}\label{problema_v}
		v''_{h}(x)=\frac{\lambda}{h(x)}v(x), \quad x\in [0,L],
	\end{equation}
	\begin{equation}\label{problema_w}
		w'_{g}(t)=\frac{\lambda c^{2}}{g(t)^{2}}w(t), \quad t\in [0,T].
	\end{equation}
\end{thm}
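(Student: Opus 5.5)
The argument mirrors the proof of Theorem~\ref{form_sol}, with equation \eqref{asterisco} replaced by \eqref{asterisco2}. By the preceding proposition, for $u(t,x)=w(t)v(x)$ the existence of $\pd u/\pd_G t$ and $\pd^2u/\pd_G x^2$ is equivalent to $w$ being $g$\--differentiable and $v$ twice $h$\--differentiable. Formulas \eqref{pd_tG} and \eqref{pd2_xG} then show that $u$ solves \eqref{Gcalor} if and only if \eqref{asterisco2} holds on $[0,T]\times[0,L]$. Since $g>0$ and $h>0$ everywhere, we may multiply \eqref{asterisco2} by $g(t)^2h(x)$ and obtain the equivalent identity
\[g(t)^2\,w'_g(t)\,v(x)-c^2\,w(t)\,h(x)\,v''_h(x)=0,\qquad (t,x)\in[0,T]\times[0,L].\]
This has exactly the structure of \eqref{asterisco}, with $g(t)^2w'_g(t)$ in place of $w'_g(t)$ and $h(x)v''_h(x)$ in place of $v''_h(x)$.

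For the direct implication, assume $u$ is nontrivial. Pick $(t_0,x_0)$ with $u(t_0,x_0)\neq0$, so that $w(t_0)\neq0$ and $v(x_0)\neq0$. Evaluating the identity at $(t_0,x_0)$, set
\[\lambda:=\frac{h(x_0)v''_h(x_0)}{v(x_0)}=\frac{g(t_0)^2w'_g(t_0)}{c^2w(t_0)}\in\mathbb{F}.\]
Fixing $t=t_0$ and dividing by $c^2w(t_0)h(x)$, which is legitimate because $h>0$, gives $v''_h(x)=\frac{\lambda}{h(x)}v(x)$ for all $x\in[0,L]$, which is \eqref{problema_v}. Fixing $x=x_0$ and dividing by $g(t)^2v(x_0)$ gives \eqref{problema_w}. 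Both $v$ and $w$ are nontrivial because they do not vanish at $x_0$ and $t_0$, respectively.

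For the converse, let $v$ and $w$ be nontrivial solutions of \eqref{problema_v} and \eqref{problema_w} for some $\lambda\in\mathbb{F}$. Then $w$ is $g$\--differentiable and $v$ is twice $h$\--differentiable, so by the proposition the $G$\--derivatives of $u=wv$ exist. Substituting into \eqref{asterisco2} yields
\[\frac{v(x)}{h(x)}\cdot\frac{\lambda c^2}{g(t)^2}w(t)-c^2\frac{w(t)}{g(t)^2}\cdot\frac{\lambda}{h(x)}v(x)=0,\]
so $u$ solves \eqref{Gcalor}. Finally, $u$ is nontrivial since it is the product of two nontrivial functions.

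I do not expect a real obstacle here. The only delicate point is to make sure the divisions are legitimate, which rests on the standing assumption $g,h>0$, and on the equivalence between the differentiability of $u$ and that of $w,v$ given by the previous proposition.
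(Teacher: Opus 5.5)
Your proof is correct and follows the paper's argument: you pick $(t_0,x_0)$ with $u(t_0,x_0)\neq 0$ and define $\lambda$ from \eqref{asterisco2} at that point. You then freeze $t=t_0$ and $x=x_0$ to obtain \eqref{problema_v} and \eqref{problema_w}, and prove the converse by direct substitution; clearing the denominators by multiplying through by $g(t)^2h(x)$ first is only a cosmetic rearrangement of the same computation.
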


\begin{proof}
	Suppose $u$ is nontrivial. Then there exists $(t_{0},x_{0})\in [0,T]\times[0,L]$ such that $u(t_{0},x_{0})\neq 0$, which implies $w(t_{0})\neq 0$ and $v(x_{0})\neq 0$. Evaluating \eqref{asterisco2} at $(t_{0},x_{0})$, we obtain
	\begin{equation*}
		\frac{v(x_{0})}{h(x_{0})}w'_{g}(t_{0})-c^{2}\frac{w(t_{0})}{g(t_{0})^{2}}v''_{h}(x_{0})=0,
	\end{equation*}
	which is equivalently to
	\begin{equation*}
		\frac{v''_{h}(x_{0})h(x_{0})}{v(x_{0})}=\frac{g(t_{0})^{2}}{c^{2}}\frac{w'_{g}(t_{0})}{w(t_{0})}.
	\end{equation*}
	Define
	\[\lambda:=	\frac{v''_{h}(x_{0})h(x_{0})}{v(x_{0})}=\frac{g(t_{0})^{2}}{c^{2}}\frac{w'_{g}(t_{0})}{w(t_{0})}.\]
	Now, substituting $t=t_{0}$ into \eqref{asterisco2}, we have
	\begin{equation*}
		v''_{h}(x)=\frac{\lambda}{h(x)}v(x), \quad x\in [0,L],
	\end{equation*}
	and evaluating \eqref{asterisco2} at $x=x_{0}$, we get
	\begin{equation*}
		w'_{g}(t)=\frac{\lambda c^{2}}{g(t)^{2}}w(t), \quad t\in [0,T].
	\end{equation*}

	Conversely, suppose there exists $\lambda\in\mathbb{F}$ such that
	problems \eqref{problema_v} and \eqref{problema_w} admit nontrivial solutions $v$ and $w$, respectively. Then their product $u(t,x)=w(t)v(x)$ satisfies
	\[\frac{\pd u}{\pd_{G}t}(t,x)-c^{2}\frac{\pd^{2} u}{\pd_{G}x^{2}}(t,x)=\frac{v(x)}{h(x)}w'_{g}(t)-c^{2}\frac{w(t)}{g(t)^{2}}v''_{h}(x)=\frac{v(x)}{h(x)}\frac{\lambda c^{2}}{g(t)^{2}}w(t)-c^{2}\frac{w(t)}{g(t)^{2}}\frac{\lambda}{h(x)}v(x)=0.\]

	Thus, $u(t,x)=w(t)v(x)$ is a nontrivial solution of \eqref{Gcalor} if and only if there exists $\lambda\in\mathbb{F}$ such that the problems \eqref{problema_v} and \eqref{problema_w} admit the nontrivial solutions $v$ and $w$.
\end{proof}

We now analyze problem \eqref{problema_w}. Define the function $p:[0,T]\rightarrow\mathbb{F}$ by
\[p(t)=\frac{\lambda c^{2}}{g(t)^{2}}.\]
If $p\in \mathcal{L}_{g}^{1}([0,T],\mathbb{R})$, then by Theorem~\ref{orden1}, the general solution of problem \eqref{problema_w} is given by
\begin{equation*}
	w(t)=x_{0}\exp_{g}(p;0,t).
\end{equation*}

To treat problem \eqref{problema_v}, we cannot apply \cite[Theorem~5.3]{Fernandez2022} directly, since the coefficients are not constant. Instead, we refer to the results in \cite{second_non_constant_coef}, where the authors study equations of the form:
\begin{equation}\label{phom2_nonconstant}
	\begin{dcases}
		v_{g}''(t)+P(t)v_{g}'(t)+Q(t)v(t)=f(t), & t\in [0,T], \\
		v(0)=x_{0}, & \\
		v_{g}'(0)=v_{0},
	\end{dcases}
\end{equation}
where $g$ is a derivator, $f,P,Q\in\mathcal{BC}_{g}([0,T],\mathbb{C})$ and $x_{0},v_{0}\in \mathbb{C}$.

In \cite{second_non_constant_coef} it is shown that problem \eqref{phom2_nonconstant} is equivalent to the system
\begin{equation*}
	\begin{dcases}
		x'_{g}(t)=y(t), & t\in[0,T],\\
		y'_{g}(t)=f(t)-P(t)y(t)-Q(t)x(t), & \\
		x(0)=x_{0}, \quad y(0)=v_{0},
	\end{dcases}
\end{equation*}
and, as proved in \cite[Theorem~5.58]{tesis}, this problem admits a unique solution.

Consequently, if $\lambda/h(x)\in\mathcal{BC}_{h}([0,L],\mathbb{C})$, then the problem
\begin{equation}\label{pvi_v}
	\begin{dcases}
		v_{h}''(x)-\frac{\lambda}{h(x)}v(x)=0, & x\in [0,L] \\
		v(0)=x_{0} & \\
		v_{h}'(0)=v_{0},
	\end{dcases}
\end{equation}
admits a unique solution.

In the following lemma, we adapt \cite[Lemma 3.12]{second_non_constant_coef} to our particular setting.
\begin{lem}
	Let $v_{1}, v_{2}\in\mathcal{BC}_{g}^{2}([0,T],\mathbb{C})$ be solutions of \eqref{problema_v} such that
	\begin{equation}\label{cond_indep}
		v_{1}(0)(v_{2})'_{h}(0)-v_{2}(0)(v_{1})'_{h}(0)\neq 0.
	\end{equation}
	Then, the function $v=c_{1}v_{1}+c_{2}v_{2}$, where
	\[c_{1}=\frac{(v_{2})'_{h}(0)x_{0}-v_{0}v_{2}(0)}{v_{1}(0)(v_{2})'_{h}(0)-v_{2}(0)(v_{1})'_{h}(0)},\qquad c_{2}=\frac{v_{0}v_{1}(0)-(v_{1})'_{h}(0)x_{0}}{v_{1}(0)(v_{2})'_{h}(0)-v_{2}(0)(v_{1})'_{h}(0)},\]
	is the unique solution of the initial value problem \eqref{pvi_v}.
\end{lem}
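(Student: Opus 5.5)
The proof has three steps: show $v$ solves the equation, check the two initial conditions, and invoke uniqueness for \eqref{pvi_v}.

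\emph{Step 1: $v$ solves the equation.} By linearity of the Stieltjes derivative (Proposition~\ref{propiedades_gderivada}, applied twice), $v=c_{1}v_{1}+c_{2}v_{2}$ is twice $h$\--differentiable. Its second derivative is
\[v''_{h}=c_{1}(v_{1})''_{h}+c_{2}(v_{2})''_{h}.\]
Since each $v_{i}$ satisfies \eqref{problema_v}, we get
\[v''_{h}(x)=\frac{\lambda}{h(x)}\bigl(c_{1}v_{1}(x)+c_{2}v_{2}(x)\bigr)=\frac{\lambda}{h(x)}v(x)\quad\text{for } x\in[0,L].\]
The regularity $v\in\mathcal{BC}^{2}_{h}([0,L],\mathbb{C})$ also passes to linear combinations.

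\emph{Step 2: the initial conditions.} Write $W:=v_{1}(0)(v_{2})'_{h}(0)-v_{2}(0)(v_{1})'_{h}(0)$, which is nonzero by \eqref{cond_indep}. The conditions $v(0)=x_{0}$ and $v'_{h}(0)=v_{0}$ are the linear system
\[c_{1}v_{1}(0)+c_{2}v_{2}(0)=x_{0},\qquad c_{1}(v_{1})'_{h}(0)+c_{2}(v_{2})'_{h}(0)=v_{0}.\]
Its coefficient matrix has determinant $W\neq0$. By Cramer's rule its unique solution is exactly the pair $c_{1},c_{2}$ given in the statement. A direct substitution also confirms this: for instance,
\[c_{1}v_{1}(0)+c_{2}v_{2}(0)=\frac{x_{0}\bigl(v_{1}(0)(v_{2})'_{h}(0)-v_{2}(0)(v_{1})'_{h}(0)\bigr)}{W}=x_{0},\]
and the derivative condition is checked in the same way.

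\emph{Step 3: uniqueness.} Problem \eqref{pvi_v} has a unique solution, via the equivalence with the first\--order system and \cite[Theorem~5.58]{tesis}. Since $v$ is a solution by Steps 1 and 2, it is that solution.

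The only genuine point needing care is Step 3. That uniqueness result requires $\lambda/h\in\mathcal{BC}_{h}([0,L],\mathbb{C})$. This holds under the standing assumptions: $h>0$ is nondecreasing, so $1/h$ is bounded by $1/h(0)$ on $[0,L]$, and it is $h$\--continuous because $h$ is trivially $h$\--continuous and bounded away from zero. I would state this explicitly and also note that the statement's $\mathcal{BC}^{2}_{g}([0,T],\mathbb{C})$ should read $\mathcal{BC}^{2}_{h}([0,L],\mathbb{C})$.
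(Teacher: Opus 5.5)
Your argument is correct and is essentially what the paper relies on: it gives no proof of its own and simply adapts \cite[Lemma 3.12]{second_non_constant_coef}, which amounts to superposition for the equation, Cramer's rule with the nonzero Wronskian \eqref{cond_indep} for the initial data, and the uniqueness for \eqref{pvi_v} that the paper obtains from the first-order system reformulation. Your explicit check that $\lambda/h\in\mathcal{BC}_{h}([0,L],\mathbb{C})$ follows from $h>0$ being nondecreasing is right, and so is your correction of $\mathcal{BC}^{2}_{g}([0,T],\mathbb{C})$ to $\mathcal{BC}^{2}_{h}([0,L],\mathbb{C})$.
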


Since \eqref{problema_v} is a second\--order linear homogeneous equation, its set of solutions is a two\--dimensional vector space. Moreover, \cite[Corollary 3.14]{second_non_constant_coef} provides a sufficient condition for two solutions of be linearly independent and, therefore, to form a basis. We apply this result equation \eqref{problema_v}.

\begin{pro}
	Let $\lambda\in\mathbb{R}$ and let $h:\mathbb{R}\rightarrow\mathbb{R}$ be a derivator such that
	\[1-\frac{\lambda}{h(x)}\Delta h(x)^{2}\neq 0, \quad x\in[0,L]\cap D_{h}.\]
	Let $v_{1}, v_{2}\in\mathcal{BC}_{h}^{2}([0,L],\mathbb{C})$ be two solutions of \eqref{problema_v} such that \eqref{cond_indep} holds. Then $v_{1}$ and $v_{2}$ are linearly independent.
\end{pro}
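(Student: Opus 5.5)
We show that any linear relation between $v_1$ and $v_2$ is trivial by evaluating it, and its $h$-derivative, at the initial point $0$; the nonvanishing of the Wronskian-type quantity in \eqref{cond_indep} then finishes the argument.

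Suppose $c_1v_1+c_2v_2\equiv 0$ on $[0,L]$ for some constants $c_1,c_2\in\mathbb{C}$. Set $\phi:=c_1v_1+c_2v_2$. By linearity of the Stieltjes derivative (Proposition~\ref{propiedades_gderivada}), $\phi'_h=c_1(v_1)'_h+c_2(v_2)'_h$. Since $\phi$ is identically zero, its $h$-derivative vanishes wherever it is defined. In particular it vanishes at $0$, because the limit defining $\phi'_h(0)$ is the limit of the zero function, whatever the case of Definition~\ref{def_gderiv} that applies at $0$. This gives the homogeneous system
\[
\begin{pmatrix} v_1(0) & v_2(0)\\ (v_1)'_h(0) & (v_2)'_h(0)\end{pmatrix}\begin{pmatrix} c_1\\ c_2\end{pmatrix}=\begin{pmatrix}0\\0\end{pmatrix}.
\]
Its determinant is exactly the quantity in \eqref{cond_indep}, which is nonzero by hypothesis, so $c_1=c_2=0$ and $v_1,v_2$ are linearly independent.

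The argument does not actually use the regressivity-type hypothesis $1-\frac{\lambda}{h(x)}\Delta h(x)^2\neq0$. That hypothesis matters for the converse direction of \cite[Corollary 3.14]{second_non_constant_coef}, namely that independent solutions have a Wronskian that never vanishes. The only point that needs care is the existence of $(v_i)'_h(0)$, and this is guaranteed since $v_i\in\mathcal{BC}^2_h([0,L],\mathbb{C})$.

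Since the hypothesis is included, the proof can also be phrased as an application of \cite[Corollary 3.14]{second_non_constant_coef}. To do so, write \eqref{problema_v} in the form \eqref{phom2_nonconstant} with $P\equiv0$, $Q(x)=-\lambda/h(x)$ and $f\equiv0$. The regressivity condition of that corollary for the associated first-order system $(x,y)'_h=A(x)(x,y)$ with
\[
A(x)=\begin{pmatrix}0&1\\ \lambda/h(x)&0\end{pmatrix}
\]
is the invertibility of $I+A(x)\Delta h(x)$, that is, $\det\bigl(I+A(x)\Delta h(x)\bigr)=1-\frac{\lambda}{h(x)}\Delta h(x)^2\neq0$ on $D_h$, which is exactly the stated assumption. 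The only point to verify in this route is this translation of hypotheses, in particular that $\lambda/h\in\mathcal{BC}_h$. That holds because $h>0$ is nondecreasing, so $h\ge h(0)>0$ on $[0,L]$, and $1/h$ is $h$-continuous.
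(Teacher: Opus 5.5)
Your argument is correct, but it follows a different route from the paper. The paper gives no independent proof. It obtains the proposition by specializing \cite[Corollary 3.14]{second_non_constant_coef} to \eqref{problema_v}, viewed as \eqref{phom2_nonconstant} with $P\equiv 0$, $Q=-\lambda/h$ and $f\equiv 0$. The condition $1-\frac{\lambda}{h(x)}\Delta h(x)^{2}\neq 0$ appears only because it is the regressivity hypothesis of that corollary. Your second paragraph makes that route explicit: the computation $\det\bigl(I+A(x)\Delta h(x)\bigr)=1-\frac{\lambda}{h(x)}\Delta h(x)^{2}$ and the check that $\lambda/h\in\mathcal{BC}_{h}([0,L],\mathbb{C})$ are exactly the translation of hypotheses needed.

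Your first argument is elementary and self-contained. For a vanishing combination $\phi=c_1v_1+c_2v_2$, linearity gives $\phi'_h(0)=c_1(v_1)'_h(0)+c_2(v_2)'_h(0)$. This must be $0$, because every difference quotient of $\phi$ vanishes. The resulting $2\times 2$ system has determinant \eqref{cond_indep}, which forces $c_1=c_2=0$. This gains generality: it uses neither the regressivity assumption nor the fact that $v_1,v_2$ solve \eqref{problema_v}, only their $h$-differentiability at $0$. So your remark that the hypothesis is superfluous for this implication is right.

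What regressivity supplies in the general theory is the converse. Two independent solutions then have a Wronskian that vanishes nowhere on $[0,L]$, so \eqref{cond_indep} actually characterizes independence. Without regressivity, the Wronskian can drop to zero after a jump point where $I+A\Delta h$ is singular. The proposition as stated does not need this converse.
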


We conclude this section by presenting a solution of \eqref{Gcalor} obtained via separation of variables.

\begin{thm}
	Let $c\in\mathbb{R}^{+}$, and let $G:\mathbb{R}^{2}\rightarrow\mathbb{R}$ be a derivator of two variables of the form $G(t,x)=g(t)h(x)$, where $g$ and $h$ are left\--continuous, nondecreasing functions such that $g(t)>0$ and $h(x)>0$ for all $t,x\in\mathbb{R}$. For each $\lambda\in\mathbb{F}$, define
	\[p_{\lambda}(t)=\frac{\lambda c^{2}}{g(t)^{2}}, \quad t\in\mathbb{R},\quad q_{\lambda}(x)=\frac{\lambda}{h(x)}, \quad x\in\mathbb{R}.\]

	Assume $p_{\lambda}\in\mathcal{L}_{g}^{1}([0,T],\mathbb{F})$ and $q_{\lambda}\in \mathcal{BC}_{h}([0,L],\mathbb{F})$. Then, equation \eqref{Gcalor} admits a solution. Moreover, if $v$ is a solution of \eqref{problema_v}, then, for a fixed $x_{0}\in\mathbb{R}$, the function
	\[u(t,x)=x_{0}\exp_{g}(p;0,t)v(x)\]
	is a solution of \eqref{Gcalor}.
\end{thm}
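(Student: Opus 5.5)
The argument is a direct application of the separation-of-variables characterization proved just before: a nontrivial $u(t,x)=w(t)v(x)$ solves \eqref{Gcalor} if and only if, for some $\lambda\in\mathbb{F}$, $v$ solves \eqref{problema_v} and $w$ solves \eqref{problema_w}. So I will construct such a pair $v$, $w$ for the given $\lambda$ and then invoke the converse direction of that theorem. The needed identities \eqref{pd_tG} and \eqref{pd2_xG} hold because $g,h>0$.

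\emph{Step 1: the time factor.} Since $p_\lambda\in\mathcal{L}^1_g([0,T],\mathbb{F})$, Theorem~\ref{orden1} applies with coefficient $p_\lambda$. It shows that $w(t)=x_0\exp_g(p_\lambda;0,t)$ is the (unique) $g$\--absolutely continuous solution of \eqref{problema_w} with $w(0)=x_0$. If $x_0\neq0$, this $w$ is nontrivial, since $w(0)=x_0\neq0$.

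\emph{Step 2: the space factor.} Since $q_\lambda\in\mathcal{BC}_h([0,L],\mathbb{F})$, the discussion preceding the lemma applies: problem \eqref{pvi_v} is equivalent to a first-order system, which has a unique solution by \cite[Theorem~5.58]{tesis}. Taking initial data $v(0)=1$, $v'_h(0)=0$ gives a nontrivial solution $v$ of \eqref{problema_v}. This already proves that \eqref{Gcalor} admits a (nontrivial) solution.

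\emph{Step 3: the product.} For an arbitrary solution $v$ of \eqref{problema_v} and any $x_0$, I apply \eqref{pd_tG} and \eqref{pd2_xG} to $u=wv$ and substitute the two equations. This gives
\[
\frac{v(x)}{h(x)}\,\frac{\lambda c^2}{g(t)^2}\,w(t)-c^2\,\frac{w(t)}{g(t)^2}\,\frac{\lambda}{h(x)}\,v(x)=0 .
\]
This is exactly the computation in the converse part of the previous theorem. Nontriviality of $v$ or $w$ is not needed here, so the degenerate cases $x_0=0$ or $v\equiv0$ are harmless. The exponential in the statement is written $\exp_g(p;0,t)$; I will read $p$ as $p_\lambda$.

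\emph{Main obstacle.} The delicate point is that \eqref{problema_w} and \eqref{problema_v} hold only $g$\--a.e.\ and $h$\--a.e., whereas the theorem concerns pointwise $G$\--derivatives. I would state the conclusion in the same a.e.\ sense as Theorem~\ref{orden1}. For $v$, the continuity of $q_\lambda$ guaranteed by \cite{second_non_constant_coef} should yield $h$\--differentiability of $v'_h$ everywhere, and I would check this via Theorem~\ref{TFC1}.
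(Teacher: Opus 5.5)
Your proposal is correct and follows essentially the argument the paper intends. The paper gives no separate proof: it presents the theorem as an immediate consequence of the preceding separation-of-variables characterization, Theorem~\ref{orden1} for the time factor (using $p_\lambda\in\mathcal{L}^1_g$), and the existence result for \eqref{pvi_v} from \cite[Theorem~5.58]{tesis} for the space factor (using $q_\lambda\in\mathcal{BC}_h$); your caveat that the resulting identities hold only $g$- and $h$-a.e.\ unless differentiability is upgraded is a fair refinement the paper leaves implicit.
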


\section{Conclusions}

In this article, we have focused on obtaining explicit solutions of the heat equation. As a direction for future work, it would be of interest to establish existence and uniqueness results, analogous to those obtained in \cite{Fernandez2020}.

Moreover, using the derivators of several variables introduced in Section \ref{Two_derivators}, a natural step would be to study equation
\eqref{Gcalor} for a general derivator $G(t,x)$.

Nevertheless, in this more general setting, we find some difficulties when attempting to obtain a solution via separation of variables.

Using the notations established in Remark~\ref{notation_G}, the $G$\--derivatives of $u(t,x)=w(t)v(x)$ can be computed as follows:
\begin{equation*}
	\frac{\pd u}{\pd_{t}G}(t,x)=\lim\limits_{s\rightarrow t}\frac{w(s)v(x)-w(t)v(x)}{G(s,x)-G(t,x)}=v(x)\lim\limits_{s\rightarrow t}\frac{w(s)-w(t)}{g_{x}(s)-g_{x}(t)}=v(x)w'_{g_{x}}(t),
\end{equation*}
\begin{equation*}
	\frac{\pd u}{\pd_{x}G}(t,x)=\lim\limits_{y\rightarrow x}\frac{w(t)v(y)-w(t)v(x)}{G(t,y)-G(t,x)}=w(t)\lim\limits_{y\rightarrow x}\frac{v(y)-v(x)}{h_{t}(y)-h_{t}(x)}=w(t)v'_{h_{t}}(x).
\end{equation*}
Hence, if $u(t,x)=w(t)v(x)$ is a solution of \eqref{Gcalor}, then it satisfies
\begin{equation}\label{asterisco3}
	v(x)w'_{g_{x}}(t)-c^{2}w(t)v''_{h_{t}}(x)=0, \quad (t,x)\in(0,T)\times(0,L).
\end{equation}
Suppose $u$ is not the trivial solution. Fix $(t_{0},x_{0})\in[0,T]\times[0,L]$ such that $u(t_{0},x_{0})\neq 0$. Substituting $(t,x)=(t_{0},x_{0})$ into \eqref{asterisco3} gives
\[v(x_{0})w'_{g_{x_{0}}}(t_{0})-c^{2}w(t_{0})v''_{h_{t_{0}}}(x_{0})=0,\]
or, equivalently,
\[\frac{v''_{h_{t_{0}}}(x_{0})}{v(x_{0})}=\frac{w'_{g_{x_{0}}}(t_{0})}{c^{2}w(t_{0})}.\]
We denote this constant by $\lambda$:
\begin{equation*}\label{lambda}
	\lambda:=\frac{v''_{h_{t_{0}}}(x_{0})}{v(x_{0})}=\frac{w'_{g_{x_{0}}}(t_{0})}{c^{2}w(t_{0})}.
\end{equation*}
Now, evaluating \eqref{asterisco3} at $t=t_{0}$, we obtain the equation:
\[v(x)w'_{g_{x}}(t_{0})-c^{2}w(t_{0})v''_{h_{t_{0}}}(x)=0, \quad x\in[0,L],\]
which implies
\begin{equation}\label{aux_v}
	v''_{h_{t_{0}}}(x)=\frac{w'_{g_{x}}(t_{0})}{c^{2}w(t_{0})}v(x), \quad x\in[0,L].
\end{equation}
To express the right\--hand side in terms of the constant $\lambda$, we analyze the relationship between $w'_{g_{x}}(t)$ and $w'_{g_{x_{0}}}(t)$.

\begin{pro}\label{rel_gx_gy}
	Let $x,y\in [0,L]$ be such that $C_{g_{x}}=C_{g_{y}}$ and $D_{g_{x}}=D_{g_{y}}$. Assume that $g_{y}$ is $g_{x}$\--differentiable on $[0,T]$. Then, for any $t\in [0,T]$, if $w$ is a $g_{y}$\--differentiable function at $t$, then, $w$ is also $g_{x}$\--differentiable at $t$ and
	\begin{equation*}
		w'_{g_{x}}(t)=w'_{g_{y}}(t)(g_{y})'_{g_{x}}(t).
	\end{equation*}
\end{pro}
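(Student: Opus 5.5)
The plan is a chain rule: factor the difference quotient of $w$ with respect to $g_x$ as the difference quotient with respect to $g_y$ times that of $g_y$ with respect to $g_x$. Since $C_{g_x}=C_{g_y}$ and $D_{g_x}=D_{g_y}$, the case split in Definition~\ref{def_gderiv} is the same for both derivators at every $t$. The point $t^*$ of \eqref{tstar} is also the same, because it depends only on the connected components of $C_{g_x}=C_{g_y}$. I will therefore fix $t\in[0,T]$ and treat the cases $t\notin D_{g_x}\cup C_{g_x}$ and $t\in D_{g_x}\cup C_{g_x}$ separately, in the compact form of Remark~\ref{derivada_tstar}.

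\emph{Case $t\in D_{g_x}\cup C_{g_x}$.} For $s>t^*$ close to $t^*$ I write
\[\frac{w(s)-w(t^*)}{g_x(s)-g_x(t^*)}=\frac{w(s)-w(t^*)}{g_y(s)-g_y(t^*)}\cdot\frac{g_y(s)-g_y(t^*)}{g_x(s)-g_x(t^*)}.\]
This requires $g_y(s)\neq g_y(t^*)$ for $s>t^*$ near $t^*$, and the same for $g_x$. If $t\in D$, the jump gives $g_y(s)-g_y(t)\ge\Delta g_y(t)>0$. If $t\in C$ and $t^*=b_n\notin D$, then $b_n\notin C_{g_y}$ means $g_y$ is not constant on any right neighbourhood of $b_n$. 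Since $g_y$ is nondecreasing, this gives $g_y(s)>g_y(b_n)$ for all $s>b_n$, and the same holds for $g_x$. Letting $s\to t^{*+}$, the two factors converge to $w'_{g_y}(t)$ and $(g_y)'_{g_x}(t)$, so the product converges to $w'_{g_y}(t)(g_y)'_{g_x}(t)$.

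\emph{Case $t\notin D_{g_x}\cup C_{g_x}$.} The same factorization applies for $s\to t$ from both sides, provided $g_y(s)\neq g_y(t)$ for $s\neq t$ near $t$. This is the main obstacle. A point outside $C_{g_y}$ can still have $g_y$ constant on a one-sided interval ending at $t$, for example $t=a_n$ approached from the right. I would handle it as follows. The definition of $g_x$-derivative is a limit over $s$ with $g_x(s)\neq g_x(t)$. Whenever $g_x(s)=g_x(t)$, the point $s$ lies in the closure of a component of $C_{g_x}=C_{g_y}$ touching $t$, so $g_y(s)=g_y(t)$ as well. Hence the set of admissible $s$ is the same for $g_x$ and $g_y$. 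This uses only monotonicity and the equality of the constancy sets: $g_x$ is constant on $[t,s]$ if and only if $(t,s)\subset C_{g_x}$, and then $g_y$ is constant there too. On that common admissible set I factor as above and pass to the limit.

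In both cases I conclude that $w'_{g_x}(t)$ exists and equals $w'_{g_y}(t)(g_y)'_{g_x}(t)$. Since the limit of a product of two convergent quantities is the product of the limits, no further estimate is needed. The only delicate part is checking that denominators do not vanish, which reduces to the equality $C_{g_x}=C_{g_y}$ together with monotonicity.
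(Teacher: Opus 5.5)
Your proposal is correct and follows the same route as the paper. Both factor the $g_x$-difference quotient as the $g_y$-quotient of $w$ times the $g_x$-quotient of $g_y$, with the case split and the point $t^*$ shared by both derivators because $C_{g_x}=C_{g_y}$ and $D_{g_x}=D_{g_y}$. Your additional check that the sets $\{s : g_x(s)\neq g_x(t)\}$ and $\{s : g_y(s)\neq g_y(t)\}$ coincide makes rigorous a point the paper only addresses informally in the remark after the proof; note that at the endpoints this check also uses left-continuity and $t\notin D_{g_y}$.
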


\begin{proof}
	Fix $x,y\in[0,L]$ and take $t\in [0,T]$.\par If $t\notin C_{g_{x}}\cup D_{g_{x}}$, then
	\[w'_{g_{x}}(t)=\lim\limits_{s\rightarrow t}\frac{w(s)-w(t)}{g_{x}(s)-g_{x}(t)}=\lim\limits_{s\rightarrow t}\frac{w(s)-w(t)}{g_{y}(s)-g_{y}(t)}\frac{g_{y}(s)-g_{y}(t)}{g_{x}(s)-g_{x}(t)}=w'_{g_{y}}(t)(g_{y})'_{g_{x}}(t).\]
	If $t\in D_{g_{x}}$, then
	\[w'_{g_{x}}(t)=\lim\limits_{s\rightarrow t^{+}}\frac{w(s)-w(t)}{g_{x}(s)-g_{x}(t)}=\lim\limits_{s\rightarrow t^{+}}\frac{w(s)-w(t)}{g_{y}(s)-g_{y}(t)}\frac{g_{y}(s)-g_{y}(t)}{g_{x}(s)-g_{x}(t)}=w'_{g_{y}}(t)(g_{y})'_{g_{x}}(t).\]
	Finally, if $t\in C_{g_{x}}$, assuming $t\in (a_{n},b_{n})\subset C_{g_{x}}=C_{g_{y}}$, we have
	\[w'_{g_{x}}(t)=\lim\limits_{s\rightarrow b_{n}^{+}}\frac{w(s)-w(b_{n})}{g_{x}(s)-g_{x}(b_{n})}=\lim\limits_{s\rightarrow b_{n}^{+}}\frac{w(s)-w(b_{n})}{g_{y}(s)-g_{y}(b_{n})}\frac{g_{y}(s)-g_{y}(b_{n})}{g_{x}(s)-g_{x}(b_{n})}=w'_{g_{y}}(t)(g_{y})'_{g_{x}}(t).\qedhere\]
\end{proof}

\begin{rem}
	The assumptions $C_{g_{x}}=C_{g_{y}}$ and $D_{g_{x}}=D_{g_{y}}$ are essential to ensure that the limits are well\--defined. The similar behavior of $g_{x}$ and $g_{y}$ prevents mismatches such as $t\notin C_{g_{x}} $ but $t\in C_{g_{y}}$, where dividing by $g_{y}(s)-g_{y}(t)$ would not be possible when taking the limit. \par
	It also avoids the case $t\in(a_{n},b_{n})\subset C_{g_{x}}$ and $t\notin C_{g_{y}}$, where the derivative
	\[w'_{g_{x}}(t)=\lim\limits_{s\rightarrow b_{n}^{+}}\frac{w(s)-w(b_{n})}{g_{x}(s)-g_{x}(b_{n})}=\lim\limits_{s\rightarrow b_{n}^{+}}\frac{w(s)-w(b_{n})}{g_{y}(s)-g_{y}(b_{n})}\frac{g_{y}(s)-g_{y}(b_{n})}{g_{x}(s)-g_{x}(b_{n})}=w'_{g_{y}}(t)(g_{y})'_{g_{x}}(b_{n}),\]
	may not coincide with $w'_{g_{y}}(t)(g_{y})'_{g_{x}}(t)$. In this example, we have assumed that $b_{n}\notin C_{g_{y}}$ so that the quotient involving $g_{y}(s)-g_{y}(b_{n})$ is well\--defined.
\end{rem}

Assuming that $G$ satisfies the hypotheses of Proposition~\ref{rel_gx_gy} for all $x,y\in[0,L]$, and applying it to equation~\eqref{aux_v} yields
\begin{equation}\label{eqv_general}
	v''_{h_{t_{0}}}(x)=\frac{w'_{g_{x_{0}}}(t_{0})(g_{x_{0}})'_{g_{x}}(t_{0})}{c^{2}w(t_{0})}v(x)=\lambda (g_{x_{0}})'_{g_{x}}(t_{0})v(x).
\end{equation}

As we can see in
\eqref{eqv_general}, there is now a dependence of $x_0$ on the right\--hand side that we did not encounter in the previous cases. The expression relating $w'_{g_{x_0}}(t)$ and $w(t)$ through $\lambda$ leads to the same issue (it depends on $t_0$). These dependencies prevent a straightforward application of the method of separation of variables and highlight the complexity of equation \refeq{Gcalor} for a general derivator.

\section*{Declarations}

\subsection*{Funding}
F. Adrián F. Tojo was supported by grant PID2020-113275GB-I00 funded by Xunta de Galicia, Spain, project ED431C 2023/12; and by MCIN/AEI/10.13039/ 501100011033, Spain, and by “ERDF A way of making Europe” of the European Union.

\subsection*{Competing Interests}
The authors have no relevant financial or non-financial interests to disclose.

\subsection*{Author Contributions}
All authors contributed to conceptualization, research, writing and reviewing of the manuscript.

\bibliography{heat_equation}
\bibliographystyle{spmpsciper}
\end{document}